\newcommand{\bbE}{{\ensuremath{\mathbb E}} }
\newcommand{\bbN}{{\ensuremath{\mathbb N}} }
\newcommand{\bbP}{{\ensuremath{\mathbb P}} }
\newcommand{\bbR}{{\ensuremath{\mathbb R}} }
\newcommand{\bbT}{{\ensuremath{\mathbb T}} }
\newcommand{\bbZ}{{\ensuremath{\mathbb Z}} }
\newcommand{\cA}{{\ensuremath{\mathcal A}} }
\newcommand{\cF}{{\ensuremath{\mathcal F}} }
\newcommand{\cG}{{\ensuremath{\mathcal G}} }
\newcommand{\cH}{{\ensuremath{\mathcal H}} }
\newcommand{\cJ}{{\ensuremath{\mathcal J}} }
\newcommand{\cK}{{\ensuremath{\mathcal K}} }
\newcommand{\cL}{{\ensuremath{\mathcal L}} }
\newcommand{\cV}{{\ensuremath{\mathcal V}} }
\newcommand{\ga}{\alpha}
\newcommand{\gu}{\upsilon}
\renewcommand{\epsilon}{\varepsilon}
\renewcommand{\theta}{\vartheta}
\renewcommand{\phi}{\varphi}
\newcommand{\sumtwo}[2]{\sum_{\substack{#1 \\ #2}}}
\renewcommand{\tilde}{\widetilde}
\newcommand{\ind}{\mathbbm{1}}
\newcommand{\dd}{{\ensuremath{\mathrm d}} }
\renewcommand{\1}[1]{\mathbbm{1}_{\{ #1 \}}}
\newcommand{\leftmean}[1]{\overset{\xleftarrow{\phantom{#1}}}{#1}}
\newcommand{\rightmean}[1]{\overset{\xrightarrow{\phantom{#1}}}{#1}}
\newcommand{\Tn}{\bbT^d_N}
\newtheorem{assumption}{Assumption}
\newtheorem{theorem}{Theorem}[section]
\newtheorem{definition}{Definition}[section]
\newtheorem{lemma}[theorem]{Lemma}
\newtheorem{corollary}[theorem]{Corollary}
\newtheorem{proposition}[theorem]{Proposition}
\newtheorem{remark}{Remark}[section]
\numberwithin{equation}{section}
\title{Hydrodynamic limit of the Schelling model with spontaneous Glauber and Kawasaki dynamics}
\author{Florent BARRET
\thanks{MODAL'X, UMR 9023, UPL, Univ. Paris Nanterre, F92000 Nanterre France. \\Email:
\emph{florent.barret@parisnanterre.fr} and \emph{niccolo.torri@parisnanterre.fr}}
\and
Niccol\`o TORRI
\footnotemark[1]
}
\date{}
\begin{document}

\maketitle

\begin{abstract}
In the present article we consider the Schelling model, an agent-based model describing a segregation dynamics when we have a cohabitation of two social groups. As for several social models, the behaviour of the Schelling model was analyzed along several directions, notably by exploiting theoretical physics tools and computer simulations. 
This approach led to conjecture a phase diagram in which either different social groups were segregated in two large clusters or they were mixed. 
In this article, we describe and analyze a perturbation of the Schelling model as a particle systems model by adding a Glauber and Kawasaki dynamics to the original Schelling dynamics. 
 As far as the authors know, this is the first rigorous mathematical analysis of the perturbed Schelling model.
We prove the existence of an hydrodynamic limit described by a reaction-diffusion equation with a discontinuous non-linear reaction term. The existence and uniqueness of the solution is non trivial and the analysis of the limit PDE is interesting in its own.
Based on our results, we conjecture, as in other variations of this model, the existence of a phase diagram in which we have a mixed, a segregated and a metastable segregation phase.
 We also describe how this phase transition can be viewed as a transition between a relevant and irrelevant disorder regime in the model.
\\[0.2cm]
\textit{2010 Mathematics Subject Classification}:
		 	60K35, 
		 	82C22, 
		 	82D99. 
	\\[0.05cm]
\textit{Keywords}:
	Schelling model, particle systems, hydrodynamics limit, reaction-diffusion equation, Ising model.
\end{abstract}

\section{Introduction}
Schelling's model of segregation was introduced by Thomas Schelling in 1971 \cite{S71, S0678}.
The original model is defined on a square grid of $N^2$ sites (or, more generally, on a regular graph with $N$ sites) where agents (individuals) belonging to two groups are disposed. 
Each agent located at a given site of the grid compares its group with the group of its neighbors. More precisely, we fix a tolerance threshold $T\in[0,1]$. We call $r_x$ the fraction of neighbors belonging to the agent's group at site $x$ and we say that the agent is satisfied if $r_x\ge T$. If the agent is unsatisfied, then he \emph{moves} on a site that makes him satisfied. What we mean with ``moving'' depends on the precise dynamics defined on the grid. 
If some sites are assumed to be empty, then an unsatisfied agent moves on the nearest empty site which makes him satisfied. 
If all the sites are occupied, then either we swap the position between two unsatisfied agents if the swapping makes both of them satisfied (the so called \textit{Kawasaki-Schelling dynamics}), or we change (flip) the group of an unsatisfied agent if this operation makes him satisfied (\textit{Glauber-Schelling dynamics}). In the first case we say that the system is closed, while in the second case the system is open, since this operation can be seen as a swapping with the outside. 

Several variations of this model exists, and these variations depend on several parameters  \cite{S71}:
 \begin{enumerate}[(1)]
	\item the neighborhood (its size, its geometry),
	\item the initial distribution of the agents,
	\item the choice of the satisfaction condition (e.g. the value of the tolerance parameter, or one could introduce a different tolerance for each group),
	\item the local dynamics between agents.
\end{enumerate}
In the original model, some sites are assumed to be empty. Several variants of Schelling's model have been considered in the recent literature in order to study the behaviour of the model when the fundamental parameters are modified. We refer to \cite{AL22PhD} for a complete overview on the subject.
 Among the different variations, let us mention that there can be more than two groups of agents \cite{HS20}, or/and that the Schelling dynamics can be perturbed: each site has a positive probability to switch regardless of its satisfaction  (spontaneous Glauber and/or Kawasaki dynamics) \cite{BMR14, MSS07, SS07}. 

The unperturbed model has attracted the interest of the mathematical community over the last 15 years \cite{BEL15, BEL16, BEL18, BIKK12, DH21, IKLZ17, HS20}.

The main concern is the behaviour of the model for large times: does the model reach a stationary state~? a stationary distribution~? If so what are the features of this equilibrium~?

A common result of the considered variants is the existence of three stationary states separated by two critical thresholds $T_f$ and $T_c$ 
towards which the system evolves, suggesting an \textit{universal} behaviour of the model.
 If $T_f<T<T_c$ the different groups merge into clusters and we observe the appearance of at least two macroscopic clusters (segregation), while if $T<T_f$ or $T>T_c$ 
we do not observe the appearance of two macroscopic clusters, see \cite{AL22PhD}.


In the present paper, we approach the model from a physical point of view, by interpreting the agent dynamics as a particle systems in interaction.
This approach was adopted by the physical community to study this model, see for instance \cite{CFL09, GVN09, OF18, OF18-1}. 
In particular we consider the setting where the points of the grid (a discrete torus $\Tn:=\left(\mathbb{Z}/N\mathbb{Z}\right)^d$, $d\ge 1$) are fully occupied and unsatisfied agents flip their state if it makes them satisfied (Glauber-Schelling dynamics). The size of the neighborhood taken into account to compute the fraction $r_x$, grows at most logarithmically with $N$.  Let us stress that the Glauber-Schelling dynamics was consider in the physics literature from a computer simulation viewpoint with a fixed size of the neighborhood, see e.g. \cite{OF18-1}.

Moreover we introduce random perturbations, either by flipping a state of an agent at rate $\beta$ (spontaneous Glauber dynamics) or exchanging the position of two agents at rate $\alpha N^2$ (accelerated spontaneous Kawasaki dynamics).

To summarize, we assume the following features:
\begin{enumerate}[(1)]
	\item the neighborhood size  used to compute the fraction $r_x$, is going to infinity with $N^d$, the number of sites,
	\item the initial distribution of the agents is fixed (deterministic) and converges as $N$ goes to infinity,
	\item we fix the tolerance parameter $T\in[0,1]$,
	\item we introduce two random perturbations of the  Glauber-Schelling mechanism: regardless of their satisfaction, a site can change type (spontaneous Glauber dynamics), and a site can swap type with a closest-neighbor (spontaneous accelerated Kawasaki dynamics).
\end{enumerate}

 From a statistical physics perspective, the main question concerns the impact of the random perturbations on the system behaviour: is there a phase transition (in the parameter $\beta$ tuning the spontaneous Glauber dynamics) between a phase where the disorder supersedes the behaviour of the model and a phase where the mechanism of the unperturbed model drives the behaviour of the system ?

Our main result (Theorem \ref{thm:convergence}) proves, by rescaling the space as $\frac1N$, an hydrodynamic limit. The limit is described by a reaction-diffusion equation and we give a complete description of the limit PDE that we get. The assumption about random perturbations (4) is important to ensure the existence of a diffusive term and that all the configurations are accessible, which is fundamental in the theory of the hydrodynamic limit, \cite{KL99}.

In the case where the size of the neighborhood stays finite in the limit, we obtain a classical reaction-diffusion equation.  This is the case where the size of the interaction term stays finite and thus microscopic. However, when the size of the neighborhood goes to infinity we get a non-linearity (the reaction term) which is discontinuous at two points. In this case, the interaction of the  Glauber-Schelling dynamics takes into account more and more agents but in the limit, the reaction term is still purely local but  discontinuous. 
In this ``mesoscopic'' limit, the existence and uniqueness of the solution of the reaction-diffusion equation with discontinuities is one of  the major points of the paper. Moreover, it is not a mere technical problem since the limiting equation does not have a unique solution for some class of initial condition, and some values of $\beta$, the parameter tuning the spontaneous Glauber dynamics. 

Finally, we conjecture the existence of a rich phase diagram in which, beyond a  disordered (and mixed) phase, where the spontaneous Glauber dynamics dominates and an ordered (and segregated) phase, where the Glauber-Schelling dynamics dominates, there is a transition in between. In this phase, we expect the system to show a metastable behaviour: the mixed and segregation phases coexist and depending on the parameters, one of them should be the most stable one and dominates the long-time behaviour of the system. Critical points depend on the parameters $\beta$ and $T$ but not on $\alpha$, see Figure \ref{fig1}. A rigorous proof of the phase diagram will require a delicate analysis of the local dynamics that goes beyond the techniques used in the present paper. We reserve this for future work.

Let us stress that in the disordered phase, based on Remark \ref{rem:gradflow}, we expect to have a mixed configuration since we conjecture that a typical configuration looks like a Bernoulli distribution of parameter $p=\frac12$ on each site, while in the ordered phase the parameter is $p=p_{T, \beta}\neq \frac12$. This means that a very large part of the configuration is either $0$ or $1$ and the appearance of clusters is possible. 
With our method, we are not able to predict the precise geometry of the clusters, even if we expect segregation in this phase, see Section \ref{sec:conj}.

In \cite{HS20}, the authors prove also a convergence of a discrete model of Schelling dynamics to the solution of a reaction equation (without diffusion) baptized a continuous Schelling dynamics. We point out that the model is quite different since, in their work, the authors consider a macroscopic neighborhood (which gives at the limit, an integro-differential equation),  do not assume any spontanous random perturbation (either Glauber or Kawasaki) and consider a model with $M\geqslant 2$ groups. Also, the authors consider a fixed tolerance parameter of $T=\frac12$ and assume that the initial configuration is given by random independent uniform variables. The proof of the convergence is based on a coupling between the discrete and the continuous Schelling dynamics.
\medskip

Our method of proof is based on the technique of the relative entropy method in the framework developed by Jara and Mezenes in \cite{JM18, JM20} and also used by Funaki and Tsunoda in \cite{FT19} for a finite number of particle in the interaction. However, in our setting, we need to improve their bounds to cover the case where the number of particles in the interaction is going to infinity. More precisely, in order to use the relative entropy method, a central step is the control of $\partial_t \cH_N(t)$, the derivative in time of the relative entropy between the law of the process $\mu_N(t)$ and a discrete measure which approximates the density solution of the reaction-diffusion equation, see Proposition \ref{pro:YauInequality} and Equation \eqref{eq:goalJ_t}. Since the number of particles in the interaction term grows with $N$, the size of the system, we need to retrace the bounds obtained in \cite{JM18, JM20} and  \cite{FT19} by taking into account the size of the interactions. This is done in Theorem \ref{prop:UB_V^+}. With our bound \eqref{eq:goalJ_t}, we get that as soon as the diameter of the interaction grows at most as $\delta(\log(N))^{1/d}$ (see Assumption \ref{ass1}), the relative entropy is $O(N^{d-\epsilon})$ for some $\epsilon >0$ (see Equation \eqref{eq:control-entropy}) which entails that the empirical measure is close in probability to a deterministic discrete process defined by Equation \eqref{eq:discretPDE}.

To complete the proof of the main result (Theorem \ref{thm:convergence}), we also prove that this deterministic discrete process, defined by Equation \eqref{eq:discretPDE}, converges to a solution of a limiting reaction-diffusion equation (Equation \eqref{eq:hydrlimit}). This is done in two steps: we first prove that the limiting PDE has a solution (in Proposition \ref{prop:existence_sol_PDE}), and for some class of initial conditions, this solution is locally unique (in Proposition \ref{prop:v_t-v_0}). The existence result is done via an approximating sequence of smooth non-linearities which are natural in our framework (defined by Equation \eqref{eq:limitePDE-Kter}). 
Note that we do not use the deterministic process defined by Equation \eqref{eq:discretPDE} which is discrete in space.
The second step is therefore to prove that the deterministic process, defined by Equation \eqref{eq:discretPDE}, has accumulation points in a uniform norm on compact set which are all solutions of Equation \eqref{eq:hydrlimit} (this is Theorem \ref{thm:discrconv}). If we have uniqueness for solutions of  Equation \eqref{eq:hydrlimit}, we have the main result.

 We establish local uniqueness for  \eqref{eq:hydrlimit} only for a class of initial conditions (in Proposition \ref{prop:v_t-v_0}), we use and adapt arguments of Gianni \cite{gianni} and Deguchi \cite{deguchi} (which proves existence and uniqueness with only one discontinuity). Note that for some initial conditions, \eqref{eq:hydrlimit} does not have a unique solution, see Remark \ref{rem:non-unique} for a simple concrete example. It would be therefore quite interesting to understand if for such initial conditions, the empirical measure process converges in some sense. We also reserve this for future work.

Note also that, still in \cite{HS20}, the continuous Schelling dynamics does not have a unique solution for all initial conditions. However, starting from a random Gaussian field, the authors prove the solution exists and is a.s. unique.

A detailed plan of the method of proof and the article is given at the end of Section \ref{sec:results} containing the main result and assumptions. In the following Section, we define the model.

\subsection*{Acknowledgments}
 The authors thank \textit{Antoine Lucquiaud} and \textit{Alessandra Faggionato} for the fruitful discussion on the model and on the article.  The authors also thank \textit{Oriane Blondel} for pointing out reference \cite{HS20}.
This research has been conducted within the FP2M federation (CNRS FR 2036) and as part of the project Labex MME-DII (ANR11-LBX-0023-01).

The authors thank the referees for their helpful suggestions.

\section{The model}

\subsection{Configurations}
For $N\in \mathbb N=\{1,2,3, \ldots\}$ we let $\Tn=(\mathbb{Z}/N\mathbb{Z})^d$ be the discrete torus and let $\Omega_N=\{0,1\}^{\Tn}$ be the space of all possible configurations. 
We call $\eta\in\Omega_N$ a configuration and $i\in\Tn$ a site. 
Let $\mathcal{V}_N\subset\Tn\setminus\{0\}$ be a 
subset of the discrete torus with a diameter which can grow with $N$ (see Assumption \ref{ass1} for the precise hypothesis on the geometry of $\cV_N$). We say that two sites $i,j\in \Tn$ are neighbors if $i-j\in \mathcal{V}_N$. We denote $K_N=|\mathcal V_N|$ its cardinality. For a configuration $\eta\in\Omega_N$ and a site $i\in \Tn$
we let
\begin{equation}
r_i(\eta)=\frac1{K_N}\sum_{j\in\mathcal{V}_N}\1{\eta_i=\eta_{i+j}}
\qquad
\text{and}\qquad
\label{def:meanfield}
\rho_i(\eta)=\frac1{K_N}\sum_{j\in\mathcal{V}_N}\eta_{i+j}.
\end{equation}
The quantity $\rho_i(\eta)$ is the mean field of $\eta$ on the neighborhood $\mathcal{V}_N+i$. Let us observe that $\rho_i(\eta)$ is independent of $\eta_i$. We note that 
\begin{equation}\label{eq:rel_r_rho}
r_i(\eta)=\rho_i(\eta)\1{\eta_i=1}+(1-\rho_i(\eta))\1{\eta_i=0}.
\end{equation}

For a given configuration, we now introduce the definition of {\it stable, unstable} and {\it potentially stable site}. 
\begin{definition}\label{def:1}
For a given site $i$, let us denote $\eta^i$ the configuration where we change $\eta_i$ to $1-\eta_i$.

Let $T\in[0,1]$. If $r_i(\eta)<T$, the site $i$ is said \textit{unstable} for $\eta$, otherwise if $r_i(\eta)\geqslant T$ the site is said \textit{stable} for $\eta$.
An unstable site $i$ for $\eta$ which is stable for $\eta^i$ is said \textit{potentially stable}.
\end{definition}

Note that $r_i(\eta^i)=1-r_i(\eta)$. Thus
\begin{enumerate}
\item a site $i$ is potentially stable if and only if $r_i(\eta)<T$ and $r_i(\eta)\le 1-T$. In particular if $T\leqslant\frac12$ an unstable site for $\eta$ is automatically potentially stable.
\item if $T>\frac12$ and $1-T< r_i(\eta) <T$, we have $r_i(\eta^i)<T$ and the site $i$ is unstable for $\eta$ and $\eta^i$.
\end{enumerate}


 Let us stress that $T$ small (that is, close to $0$) entails that the system configuration is easily close to stability, while $T$ large (that is, close to $1$) entails that it is more difficult for configurations to be stable, the constraints are not easy to satisfy. We refer to Section \ref{sec:conj} for a discussion on the dynamics.

\subsection{Infinitesimal generator, construction of the process}

Fix $\alpha >0$ and $\beta>0$. Let us consider the following dynamics: starting from a configuration $\eta$
\begin{enumerate}
\item if a site $i$ is potentially stable, we flip the value at $i$ with rate $1$ (Glauber-Schelling dynamics),
\item two nearest-neighbors $i$ and $j$ exchange their values with rate $\alpha N^2$ (accelerated spontaneous Kawasaki dynamics),
\item a site $i$ can change its value at rate $\beta>0$ (spontaneous Glauber dynamics).
\end{enumerate}

This dynamics defines an infinitesimal generator $\cL_N$ defined for $F$ a function on $\Omega_N$ by
\begin{align}\label{eq:defLtorus}
\cL_N F(\eta)=&\sum_{i\in \bbT_N^d }(F(\eta^i)-F(\eta))(\1{r_i(\eta)<T,r_i(\eta)\leqslant 1-T}+\beta)+\alpha N^2\sumtwo{i,j\in\bbT_N^d}{|i-j|=1,}(F(\eta^{ij})-F(\eta)),
\end{align}
where $\eta^{ij}$ is the configuration where the values at site $i$ and $j$ have been exchanged.

The following proposition states that the process is well defined, since the state space is finite.
\begin{proposition}\label{prop:feller}
Given an initial configuration $\eta_0$, $\cL_N$ is the infinitesimal generator of a Feller process, denoted $(\eta^N(t))_{t\geq 0}$.
\end{proposition}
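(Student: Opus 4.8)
The plan is to observe that this is just the standard construction of a continuous-time Markov chain on a finite state space, for which the Feller property is automatic. First I would record that $\Omega_N=\{0,1\}^{\Tn}$ is finite, so $C(\Omega_N)$ — continuous functions for the discrete topology — is the finite-dimensional space $\bbR^{\Omega_N}$ equipped with the sup norm, and every cylinder function is simply an arbitrary function on $\Omega_N$. Next I would check that $\cL_N$ from \eqref{eq:defLtorus} is a bounded, conservative Markov generator: each family of moves carries a non-negative rate ($\1{r_i(\eta)<T,\,r_i(\eta)\le 1-T}+\beta\in[\beta,1+\beta]$ for the Schelling and spontaneous flips, and $\alpha N^2$ for each nearest-neighbour exchange), the total exit rate from any $\eta$ is bounded by $c(N):=N^d(1+\beta)+\alpha N^2\cdot\#\{\text{edges of }\Tn\}<\infty$, and $\cL_N$ annihilates constants since each increment $F(\eta^i)-F(\eta)$ and $F(\eta^{ij})-F(\eta)$ vanishes on constant $F$. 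Hence $\cL_N$ is a bounded linear operator with $\|\cL_N\|\le 2c(N)$.

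Then I would define the semigroup directly by the norm-convergent exponential series $P^N_t:=e^{t\cL_N}$. Boundedness of $\cL_N$ makes $(P^N_t)_{t\ge 0}$ a norm-continuous (in particular strongly continuous) semigroup; writing $P^N_t=e^{-\lambda t}e^{t(\cL_N+\lambda I)}$ with $\lambda=\|\cL_N\|$ and noting that $\cL_N+\lambda I$ has non-negative matrix entries shows $P^N_t\ge 0$, while $P^N_t\mathbf 1=\mathbf 1$ follows from $\cL_N\mathbf 1=0$; so $(P^N_t)$ is a positivity-preserving conservative contraction semigroup, i.e.\ a Feller semigroup on $C(\Omega_N)$. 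By the standard correspondence between Feller semigroups and Feller processes — or simply Kolmogorov's construction of the Markov chain with transition kernels $P^N_t$ started from $\eta_0$ — there is a càdlàg Markov process $(\eta^N(t))_{t\ge 0}$ with $\eta^N(0)=\eta_0$ and generator $\cL_N$, unique in law since $\cL_N$ is bounded.

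For later use it is worth recording the explicit graphical construction as well: attach to every site $i$ an independent rate-$1$ Poisson clock (for the Schelling flip, executed only when $i$ is potentially stable in the current configuration) and an independent rate-$\beta$ clock (for the spontaneous flip), and to every edge $\{i,j\}$ an independent rate-$\alpha N^2$ clock (for the exchange); since the aggregate rate $c(N)$ is finite, almost surely only finitely many rings occur in any bounded time interval, so the process is well defined for all $t\ge 0$ without explosion, and differentiating $t\mapsto\bbE_{\eta_0}[F(\eta^N(t))]$ at $t=0$ recovers \eqref{eq:defLtorus}.

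There is essentially no obstacle here: the only points requiring a line of verification are that all rates are finite and non-negative and that $\cL_N$ kills constants, after which finiteness of $\Omega_N$ makes both non-explosion and the Feller property immediate. I would accordingly keep the written proof to a short paragraph.
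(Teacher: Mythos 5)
Your proof is correct and takes the same route the paper implicitly relies on: the paper simply remarks that the process is well defined because the state space is finite, and your argument is exactly the standard fleshing-out of that observation (bounded conservative generator on a finite set, exponential semigroup, Feller property automatic). No discrepancy to report.
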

 We let $\mu^N_t$ be the distribution of $\eta^N(t)$.

%

\begin{remark}
 In this article we focus on the compact setting (torus) because a non compact framework, as $\bbR^d$, presents technical problems for the convergence of the process, nevertheless the discrete model can be well defined on $\bbZ^d$ (cf. \eqref{eq:defLtorus} and Proposition \ref{prop:feller})  and we conjecture that our main results (cf. Theorem \ref{thm:convergence}) hold in this setting.
\end{remark}

\section{Main results}\label{sec:results}
We let $u^N_0(i):=\bbE_{\mu_N}\big[\eta_i^N(0)\big]$, $i\in \Tn$ be the initial distribution of our process, that is, $\eta_i^N(0)$ is distributed as a Bernoulli of parameter $u_0^N(i)$.

For a vector $v$, $|v|$ denotes its euclidean norm and $|v|_\infty$ its uniform norm.

\begin{assumption}Assumptions on $\cV_N$.
\label{ass1}
\begin{enumerate}
\item Let $\ell_\cV$ be the diameter of $\cV$. Then, $\ell_\cV\le \delta \big( \log N \big)^{\frac1d}$ for some $\delta>0$.
\item If $d=1$, suppose that $\cV_N\subset \mathbb{Z}\setminus \mathbb{N}$.
\end{enumerate}
\end{assumption}
\begin{assumption}Assumptions on $u_0^N$.
\label{ass2}
\begin{enumerate} 
\item There exists $\epsilon>0$ such that $\epsilon \le u_0^N(i)\le 1-\epsilon$ uniformly on $i\in \Tn$ and $N\in \bbN$.
\item  There exists $C_0>0$ independent of $N$ such that  $|\nabla u_0^N(i)|_\infty\le \frac{C_0}{N}$, where $\nabla u_0^N(i)=(u_0^N(i+e_k)-u_0^N(i))_{k=1}^d,$ with $e_k\in \bbZ^d$ the unit vector of direction $k$.
\item 
Let $\gu_{u_0^N}^N=\bigotimes_{i\in \Tn}\mathrm{B}\big(u_0^N(i)\big)$ be the law of a sequence of independent Bernoulli of parameter $u_0^N(i)$.
Suppose that
$\cH(\mu_0^N \, |\,  \gu _{u^N(0)}^N)=O(N^{d-\epsilon_0})$ for some $\epsilon_0>0$ small, where $\cH(\mu \, |\,  \gu)$ is the relative entropy of $\mu \ll\gu$,
\begin{equation}\label{de:relEntropy}
\cH(\mu \, |\,  \gu):=\int\frac{\dd \mu}{\dd \gu}  \log \bigg(\frac{\dd \mu }{\dd \gu}  \bigg)\dd \gu
\end{equation}
\item Let $\tilde u_0^N(x)$ be the linear interpolation on $\bbT^d=(\bbR/\bbZ)^d$, the $d$ dimensional torus, of $u_0^N(i)$ such that $\tilde u_0^N(i/N)=u_0^N(i)$. Then, there exists $u_0\in \mathcal C(\bbT^d)$ such that $\tilde u_0^N$ converges uniformly to $u_0$ in $\mathcal C(\bbT^d)$.
\end{enumerate}
\end{assumption}
\begin{remark}
The assumption \ref{ass1}(2.) is only technical and it could be removed by considering the dimension $d=1$ separately from the rest of the dimensions, cf. Remark \ref{rem:remove-hyp-chiante}.
\end{remark}

\begin{remark}
Let us observe that Assumption \ref{ass2}(3.) is stronger than the typical assumption used in the relative entropy method (see Section \ref{sec:entropymethod}), that is $\cH(\mu_0^N \, |\,  \gu _{u^N(0)}^N)=o(N)$, cf. \cite{Y91, KL99}, which is the first step to prove the convergence in Theorem \ref{thm:convergence}. This is due to the fact that we need a stronger control on the error term in order to balance the fact that the sequence $\cV_N$ grows with $N$, see \eqref{eq:control-entropy}.
\end{remark}

Define, for $t\geqslant 0$,
\begin{equation}\label{def:empmeas}
\pi^N_t=\pi^N_t(\eta, \dd v)=\frac1{N^d}\sum_{i\in \Tn}\,\eta_i(t)\delta_{\frac iN}(\dd v)
\end{equation}
 the empirical measure associated to the Markov process $\eta$ where the space is rescaled by $\frac1N$. $\pi^N_t$ is a positive measure on $\bbT^d$.

We now state our main result, which concerns the convergence in probability of the empirical measure.
\begin{theorem}\label{thm:convergence}
Under Assumptions \ref{ass1} and \ref{ass2}, if $u_0$ (the limit of the initial condition, according to Assumption \ref{ass2}-4) is such that
\begin{enumerate}
\item $u_0\in \mathcal C^1(\bbT^d)$, with $\nabla u_0$ Lipschitz,
\item $\nabla u_0(x)\neq 0$ for $x\in \bbT^d$ such that $u_0(x)=\min(T,1-T)$ or $u_0(x)=1-\min(T,1-T)$,
\end{enumerate}
then the reaction-diffusion equation 
\begin{equation}\label{eq:hydrlimit}
\begin{cases}
\partial_tu(t,x)=2\alpha\Delta u(t,x)+\beta(1-2u(t,x))+g_{\infty}(u(t,x)),\\
u(0,x)=u_0(x),
\end{cases}
\end{equation} 
with $g_\infty$ defined by $g_{\infty}(p):=(1-p)\1{1-p<\min(T,1-T)}-p\1{p\leqslant \min(T,1-T)}$,
has an unique solution $u=u(t,x)$ with $(t,x)\in [0,\tau]\times \bbT^d$ for some $\tau>0$, where $\bbT^d$ is the $d$-dimension torus.
Moreover, for every test function $\phi:\bbT^d\to \bbR$ and for every $\epsilon>0$, 
\begin{equation}\label{eq:conv_emp_meas}
\lim_{N\to +\infty}\mu^{N}\bigg(\,\Big|\langle \pi^{N}, \phi \rangle-\langle u, \phi \rangle\Big|>\epsilon\bigg)=0\,,\qquad \forall \, t\in [0,\tau]\, , 
\end{equation}
where $\langle \pi^N, \phi \rangle$ and $\langle u, \phi \rangle$ denote the integral of $\phi$ with respect to the measure $\pi^N$ or $u(x)\dd x$ respectively.
\end{theorem}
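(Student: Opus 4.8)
\emph{Proof strategy.} The argument splits into a PDE part and a probabilistic part, linked only through the intermediate discrete profile defined by \eqref{eq:discretPDE}.

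\textbf{Step 1 (well-posedness of the limit PDE).} First I would establish existence of a density solution of \eqref{eq:hydrlimit}. Since $g_\infty$ is bounded and piecewise constant with jumps at $p_*:=\min(T,1-T)$ and $1-p_*$, I would approximate it by a family of Lipschitz non-linearities $g_K$ — precisely the smoothed reaction terms of \eqref{eq:limitePDE-Kter} coming from the finite-neighborhood dynamics — solve the regularized semilinear heat equations by standard parabolic theory, derive uniform spatial regularity bounds using the regularity of $u_0$, and pass to the limit to produce a solution $u$ (this is Proposition \ref{prop:existence_sol_PDE}). For \emph{uniqueness} on a short time interval $[0,\tau]$ I would use hypotheses (1)--(2): since $\nabla u_0\neq 0$ on $\{u_0=p_*\}$ and $\{u_0=1-p_*\}$, by continuity the solution keeps crossing these values transversally for small $t$, so the level sets $\{u(t,\cdot)=p_*\}$, $\{u(t,\cdot)=1-p_*\}$ remain Lipschitz hypersurfaces of zero Lebesgue measure. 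This allows adapting the Gianni--Deguchi comparison argument (\cite{gianni}, \cite{deguchi}), originally for a single discontinuity, to bound the difference of two solutions by a Gronwall estimate in which the contribution of the jump sets vanishes. This is Proposition \ref{prop:v_t-v_0}, and is where conditions (1)--(2) are genuinely needed; without them \eqref{eq:hydrlimit} is non-unique, see Remark \ref{rem:non-unique}.

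\textbf{Step 2 (relative entropy bound).} On the probabilistic side I would run the relative entropy method of Jara--Menezes and Funaki--Tsunoda. Let $\gu^N_t$ be the product Bernoulli measure whose parameters solve \eqref{eq:discretPDE}, and set $\cH_N(t):=\cH(\mu^N_t\,|\,\gu^N_t)$. Yau's inequality (Proposition \ref{pro:YauInequality}) bounds $\partial_t\cH_N(t)$ by a negative Dirichlet-form term plus an error $\int V^N\,\dd\mu^N_t$ measuring the discrepancy between the microscopic drift driven by $\1{r_i(\eta)<T,\,r_i(\eta)\le 1-T}+\beta$ and the hydrodynamic drift $\beta(1-2u)+g_\infty(u)$. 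The crucial point is to control this error when $K_N=|\cV_N|\to\infty$: a naive one-block replacement of $r_i(\eta)$ by the mean field $\rho_i$ produces a cost growing with $K_N$, so one must retrace the replacement/large-deviation estimates keeping track of $K_N$ (Theorem \ref{prop:UB_V^+}). Under Assumption \ref{ass1}, $\ell_\cV\le\delta(\log N)^{1/d}$, this gives the bound \eqref{eq:goalJ_t}, and a Gronwall argument yields $\cH_N(t)=O(N^{d-\epsilon})$ for some $\epsilon>0$, see \eqref{eq:control-entropy}. The discontinuity of $g_\infty$ enters here too: conditions (1)--(2) ensure the limiting density spends negligible mass at the critical values $p_*,1-p_*$, so the contribution of the jump region to $V^N$ is controlled.

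\textbf{Step 3 (concentration and passage to the PDE).} From $\cH_N(t)=o(N^d)$ the entropy inequality together with an exponential Chebyshev bound gives that $\langle\pi^N_t,\phi\rangle$ is close in $\mu^N$-probability to $N^{-d}\sum_{i}u^N(t,i)\phi(i/N)$, the discrete profile of \eqref{eq:discretPDE}. It then remains (Theorem \ref{thm:discrconv}) to show that this discrete profile converges uniformly on compact sets to a solution of \eqref{eq:hydrlimit}: I would derive uniform-in-$N$ equicontinuity estimates for the discrete process via a discrete maximum principle and discrete energy/Schauder bounds, extract accumulation points in $\mathcal C([0,\tau]\times\bbT^d)$, verify that every such limit point is a mild/weak solution of \eqref{eq:hydrlimit}, and invoke the uniqueness from Step 1 to conclude that the whole sequence converges to $u$. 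Combining this with Step 3 gives \eqref{eq:conv_emp_meas}. \textbf{Main obstacle.} The principal difficulty is the interplay between the growing interaction range and the discontinuous reaction term: the logarithmic growth of $\ell_\cV$ is exactly what makes the $K_N$-dependent replacement cost in Theorem \ref{prop:UB_V^+} affordable, while the lack of regularity of $g_\infty$ forces the transversality conditions (1)--(2) both in the PDE uniqueness (Step 1) and in taming the error term near the jump sets (Step 2).
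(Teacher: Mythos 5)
Your proposal follows essentially the same route as the paper: relative entropy method with the $K_N$-dependent refinement of the replacement estimate (Theorem \ref{prop:UB_V^+}) to get $\cH_N(t)=O(N^{d-\epsilon})$, concentration to reduce to the discrete profile of \eqref{eq:discretPDE}, existence for \eqref{eq:hydrlimit} by passing to the limit along the smoothed non-linearities, local uniqueness \`a la Gianni--Deguchi under the transversality hypotheses, and compactness plus identification of accumulation points of $u^N$ (Theorem \ref{thm:discrconv}) closed by uniqueness.

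One point in your Step 2 is misattributed: the hypotheses (1)--(2) on $u_0$ play no role in the entropy estimate. That estimate compares $\mu^N_t$ with the Bernoulli profile driven by the \emph{discrete} reaction term $G(i,u^N)$, which for each finite $N$ is smooth (a polynomial in the $u_j$'s), and the error terms $V^\pm$, $V$ are fluctuations of centered sub-Gaussian variables controlled purely by Assumption \ref{ass1} and the a priori bounds of Propositions \ref{prop:u^Nin01} and \ref{prop:gradient_est}; the discontinuity of $g_\infty$ and the transversality of the level sets $\{u_0=p_*\}$, $\{u_0=1-p_*\}$ enter only in the PDE analysis, namely the uniqueness of \eqref{eq:hydrlimit} (Proposition \ref{prop:v_t-v_0}) used to identify the limit of $u^N$. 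This does not break your argument, since no such control near the jump sets is actually needed at the microscopic level, but the claim as stated should be removed. Likewise, your equicontinuity for the discrete profile via discrete Schauder/energy bounds would work, whereas the paper instead compares the discrete mild formulation with the continuous semigroup through Duhamel and the local central limit theorem; this is only a difference of technical implementation.
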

\begin{remark}
If the solution of \eqref{eq:hydrlimit} is not unique, which is not a technical difficulty but a real possibility for some initial conditions (see Remark \ref{rem:non-unique} for a concrete example), then 
any accumulation point of the sequence of empirical measure is a solution of  \eqref{eq:hydrlimit}, see Theorem \ref{thm:discrconv}.
\end{remark}

\subsection{Organisation of the paper}
To prove Theorem \ref{thm:convergence} we first prove that the empirical measure is close to a discrete measure $u^N$ which is a solution of a discrete analogous of \eqref{eq:hydrlimit}, this is Theorem \ref{thm:intermediateGoal}. Its proof is based on an entropy method approach in which the relative entropy between $\mu^N$ and $\gu_{u^N}^N$, see Theorem \ref{thm:entropy}. Even if this technique is quite standard in the particle systems theory, some new technical estimations arising from the geometry of the system are needed, this is Theorem \ref{prop:UB_V^+}.
In Section \ref{sec:estimatesolu^N} we discuss some central technical estimations about $u^N$, in order to describe the behaviour of the discrete model.
 Then, in Section \ref{sec:reaction-diff} we discuss the existence and uniqueness of \eqref{eq:hydrlimit} and in Section \ref{sec:conv_discrPDE} we show the convergence of the $u^N$ toward the density $u$ by completing the proof of Theorem \ref{thm:convergence}. We stress that the proof of the existence and uniqueness is not standard and the analysis of this PDE is interesting in its own.
 
 In Section \ref{sec:conj} we state our conjecture on the phase diagram of the model.

\subsection{Conjecture on the phase diagram}\label{sec:conj}
In this Section, we discuss the phase diagram that describes the mixed and segregated phases. We start by setting the Equation \eqref{eq:hydrlimit} in a more convenient form.
Set $p_0(T):=\min(T,1-T)\in [0, \frac12]$. For $p\in[0,1]$ we define 
\begin{align}
\gamma_{\infty, \beta}(p)&=\left\{
\begin{aligned}
&\beta\left(p-\frac12\right)^2+\frac12\left(p^2-p_0(T)^2\right) 
&\text{ for $0\leqslant p<p_0(T)
$},\\
& \beta\left(p-\frac12\right)^2  &\text{ for $p_0(T)\leqslant p<1-p_0(T)
$},\\
&\beta\left(p-\frac12\right)^2+\frac12\left((1-p)^2-p_0(T)^2\right) &\text{ for $1-p_0(T)\leqslant p\le 1$}.
\end{aligned}\right.
\end{align}
Our conjecture is based on the analysis of $\gamma_{\infty, \beta}$ and it is represented in Figure \ref{fig1}. 
We observe that $\gamma_{\infty,\beta}$ is continuous and satisfies $\gamma_{\infty,\beta}(p)=\gamma_{\infty,\beta}(1-p)$. For $p~\neq~p_0(T),1-p_0(T)$, we have that
$\gamma'_{\infty,\beta}(p)=-\beta(1-2p)-g_{\infty}(p)$ and \eqref{eq:hydrlimit} can be written as
\begin{equation}
\partial_tu(t,x)=2\alpha\Delta u(t,x) -\gamma'_{\infty,\beta}(u(t,x)).
\end{equation}
In such a way $\gamma_{\infty, \beta}$ can be viewed as a potential function of the system, its analysis provides the stable and metastable equilibrium points of the system.
Therefore, to discuss the phase transition we can look at the structure of $\gamma_{\infty,\beta}(p)$, see Figure \ref{fig2}.
The function $p\mapsto \beta \left(p-\frac12\right)^2+\frac12\left(p^2-p_0(T)^2\right)$ has a unique minimum at $p=p^\ell:=\frac{\beta}{1+2\beta}$.
Therefore, if $0\le p^\ell<p_0(T)<\frac12 $, 
the function $\gamma_{\infty,\beta}$ has three regular minima: $p^c:=\frac12$, $p^\ell$ and $p^r:=1-p^\ell$. 
Note that 
\[
\gamma_{\infty, \beta}(p^c)=0
\quad \text{ and } \quad 
\gamma_{\infty, \beta}(p^r)=\gamma_{\infty,\beta}(p^\ell)=\frac{\beta}{4(1+2\beta)}-\frac{p_0(T)^2}2 \, .
\]
Then we get that if $p_0(T)<p^m:=\sqrt{\frac{\beta}{2(1+2\beta)}}$, we have $\gamma_{\infty, \beta}(p^c)<\gamma_{\infty, \beta}(p^\ell)$ and if $p^m<p_0(T)$, we have that $\gamma_{\infty, \beta}(p^c)>\gamma_{\infty}(p^\ell)$. 
If $p^\ell>p_0(T)$, $p^c$ is the only minimum.
\smallskip

The two thresholds for $p_0(T)$ are then $p^\ell$ and $p^m$, see Figure \ref{fig1}. Since $p^\ell<p^m$, we have the following picture: as $T$ is close to $0$ and below $p^\ell$, we have a unique minimum of $\gamma_{\infty, \beta}$, so that typical configurations are close to $p=1/2$ which is of lowest energy of $\gamma_{\infty,\beta}$. It means that, at equilibrium, we expect a configuration balanced between $0$ and $1$ and we do not have segregation.
Then, as $T$ goes above the threshold $p^\ell$ but stays below $p^m$, other minima at $p=p^\ell$ and $p=p^r$ appear, and these two configurations are metastable since their energy is higher, so we can have segregation for a small proportion of the time. The next threshold is $p^m$, at which the two metastable configurations become stable and $p=1/2$ is the metastable one so that we expect stable segregation. 
For $T$ above $\frac12$ the picture is symmetric.

\begin{figure}[t]
\centering
\begin{subfigure}{0.4\textwidth}
    \includegraphics[width=\textwidth]{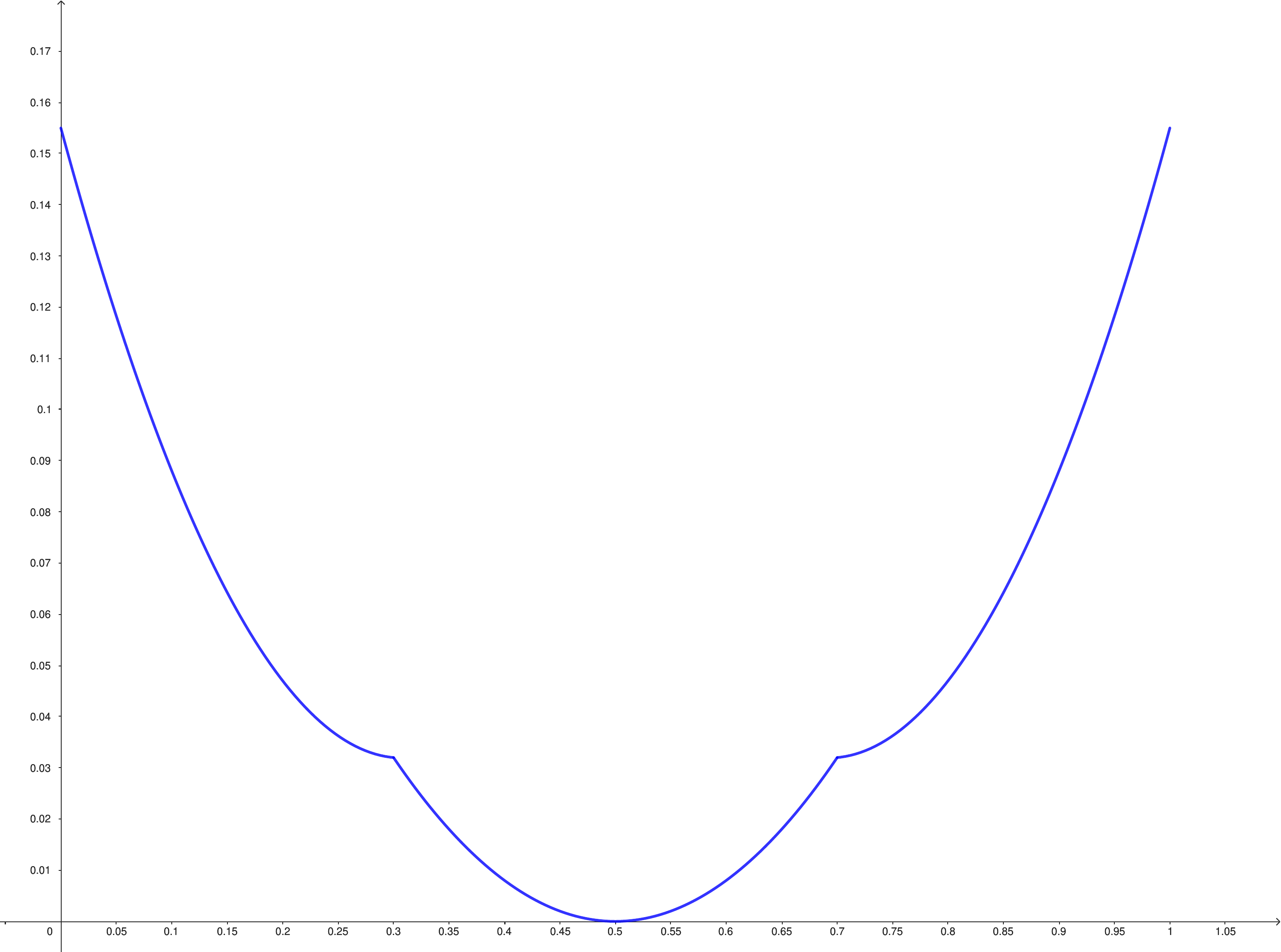}
    \caption{$T=0.3$ and $\beta=0.8$}
    \label{fig:1}
\end{subfigure}
    \hfill
\begin{subfigure}{0.4\textwidth}
    \includegraphics[width=\textwidth]{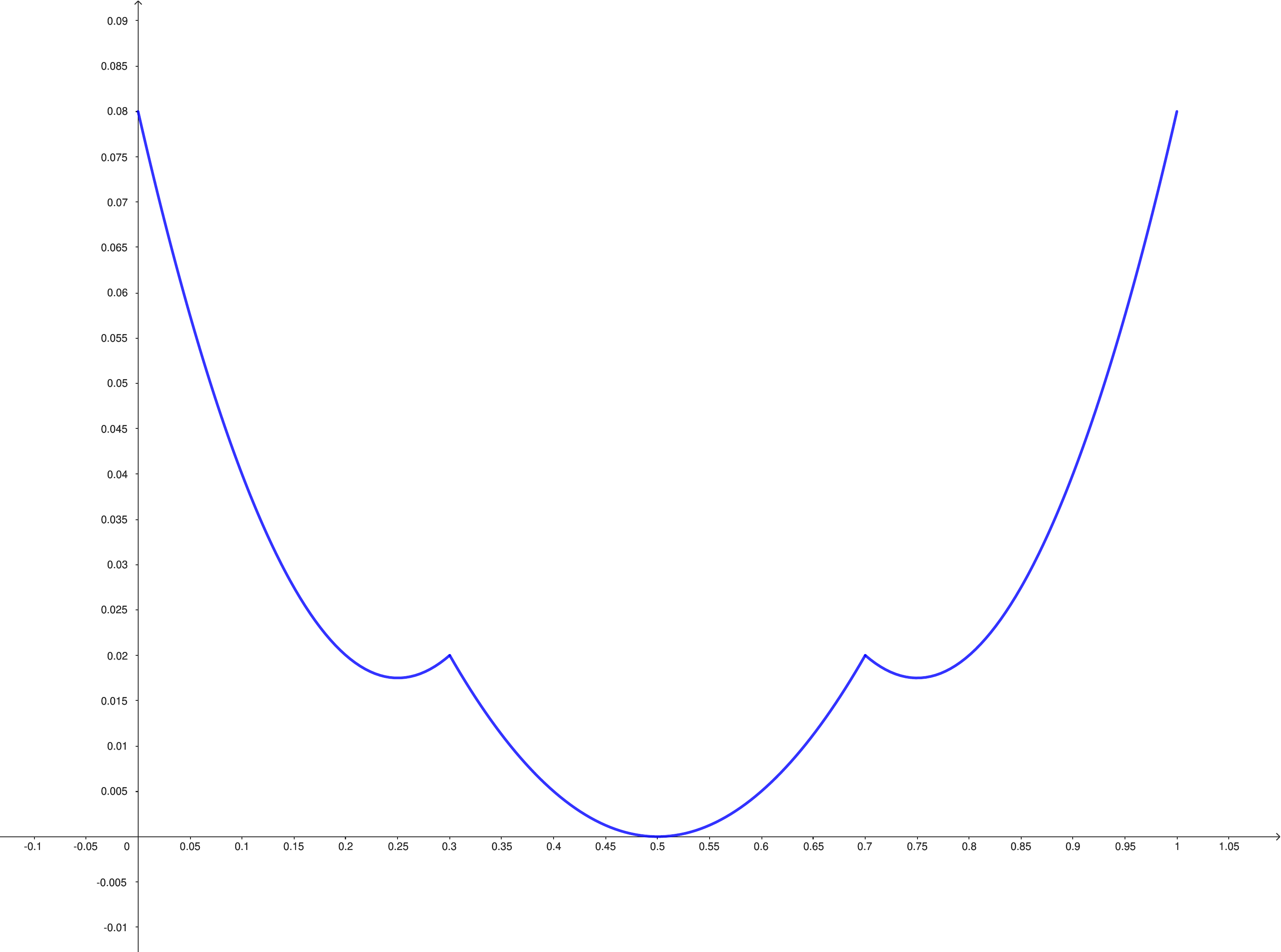}
    \caption{$T=0.3$ and $\beta=0.5$}
    \label{fig:2}
\end{subfigure}

\begin{subfigure}{0.4\textwidth}
    \includegraphics[width=\textwidth]{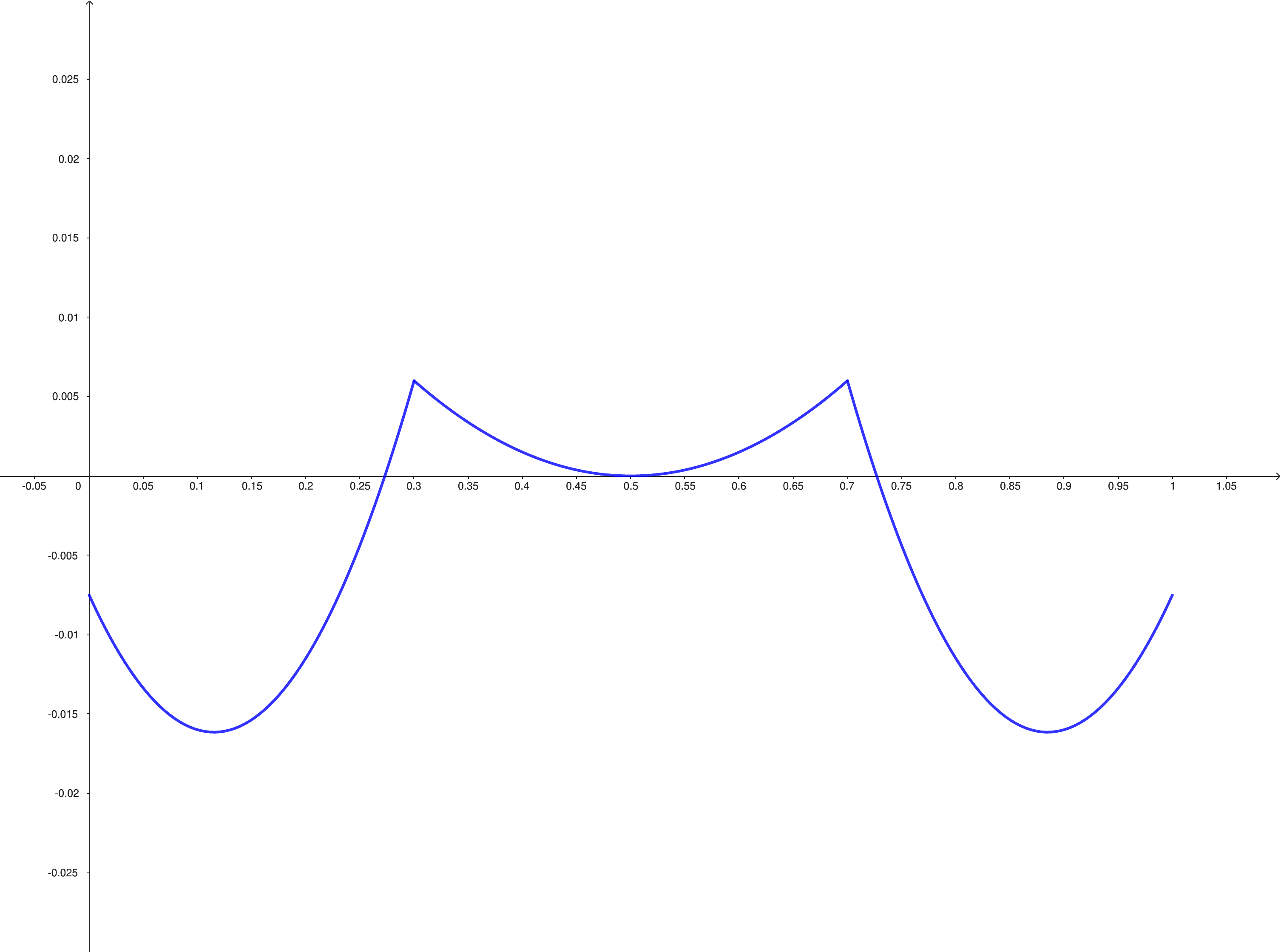}
    \caption{$T=0.3$ and $\beta=0.15$}
    \label{fig:3}
\end{subfigure}
\hfill
\begin{subfigure}{0.4\textwidth}
    \includegraphics[width=\textwidth]{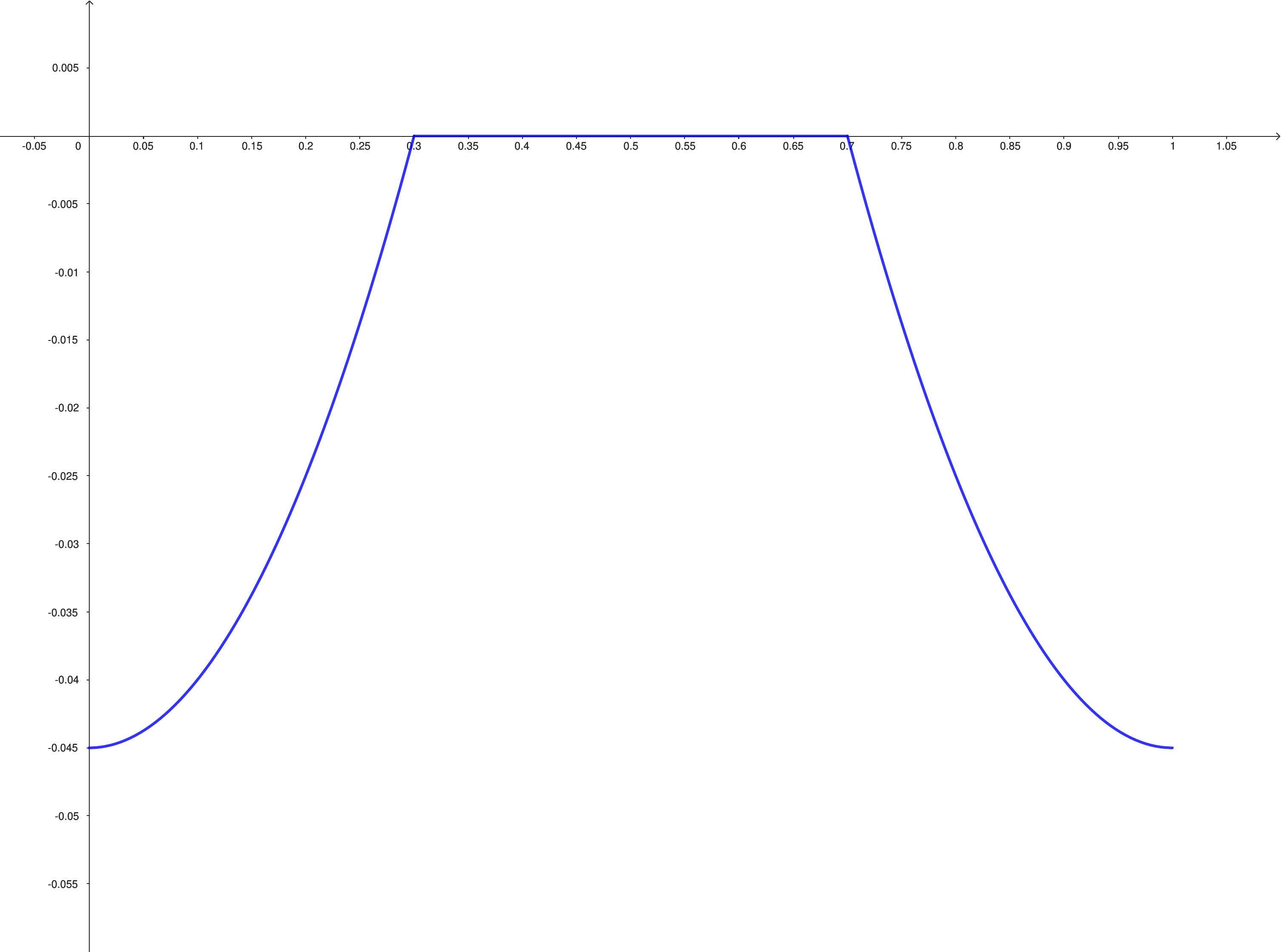}
    \caption{$T=0.3$ and $\beta=0$}
    \label{fig:4}
\end{subfigure}

\caption{Representation of $\gamma_{\infty, \beta}(p)$, $p\in[0,1]$, for different values of $\beta$ and a fixed $T=0.3$. 
The discontinuities of $\gamma_{\infty, \beta}^\prime(p)$ are situated at $p_0(T)$ and $1-p_0(T)$.
In \eqref{fig:1} we have that $p_0(T)=0.3<p^\ell\approx 0.3076$ and the unique stable equilibrium point is $p^c=\frac12$. In \eqref{fig:2} we have that $p^\ell=0.25<p_0(T)=0.3<p^m\approx 0.3535$, so that $p^c=\frac12$ is stable, while $p^\ell$ and $p^r$ are metastable equilibrium.
In \eqref{fig:3} we have that $p_0(T)=0.3>p^m\approx 0.2401$ and $p^\ell$ and $p^r$ become stable while $p^c$ is metastable. Finally in \eqref{fig:4} we illustrated the limit case with $\beta=0$, and the two stable equilibriums are $p^\ell=0$ and $p^r=1$. In this case, the local minimum $p^c$ degenerates into the segment $[p_0(T), 1-p_0(T)]$.}
\label{fig2}
\end{figure}

\begin{figure}[t]
\centering
\includegraphics[scale=1.6]{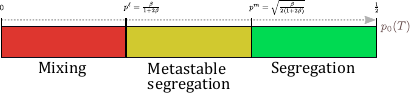}
\caption{Representation of the different phases of the system as function of the parameter $p_0(T)\in [0, \frac12]$. When $T$ is close to $0$ or $1$ ($p_0(T)\in (0, p^\ell)$) we do not have segregation (red parts) and typical configurations are provided by a mixing of $0$ and $1$. If $T$ is close to $1/2$ ($p_0(T)\in (p^m,\frac12)$) we have segregation (green parts): a very large part of the configuration are composed of $0$ or $1$. We have also intermediate values of $T$ ($p_0(T)\in (p^\ell,p^m)$) for which the segregation is metastable (yellow parts).}
\label{fig1}
\end{figure}

Let us discuss heuristically why if $p_0(T)<p^\ell$, we expect that the random perturbation dominates and that we have a mixed phase.
Let us suppose for a while that $\beta=0$, so that we do not consider the spontaneous Glauber dynamics and the change of the status of a site is only due to the Glauber-Schelling dynamics. 
According with the literature (see e.g., \cite{AL22PhD} and the reference therein), small and large values of
$T$ produce analogous behaviour of the system, but for different reasons. More precisely, given a configuration $\eta$, according with Definition \ref{def:1}, a site $i$ is potentially stable if and only if $r_i(\eta)< T$ and $r_i(T)\le 1-T$. 
Therefore, if $T$ is small, an unstable site is automatically potentially stable so that unstable sites become stable because of the Glauber-Schelling dynamics. Moreover, if $T$ is small, the system is already close to a stable configuration since the constraints are easily satisfied. 
On the other hand, if $T$ is close to $1$, then $1-T$ is small, so that the largest part of the unsatisfied sites do not change their status because this does not make them satisfied and the Glauber-Schelling dynamics is swiftly blocked. 
In both these case we do not expect segregation. 
If we introduce the spontaneous Glauber perturbation in the system and $\beta$ is large compared to $p_0(T)$, we expect that the spontaneous flips drive the behaviour of the system since the noise in each neighborhood is non-negligible, forcing the Glauber-Schelling mechanism to a continuous flipping of the sites. We then expect that if $p_0(T)<p^\ell$ the typical configuration is a mixing between the two groups without macroscopic clusters.

On the other hand, if $T$ is close to $1/2$, the Glauber-Schelling dynamics can give rise to a segregation process in which typical configurations are composed by a large part of $0$ or $1$ and we observe the appearance of macroscopic clusters which stabilizes the configurations. Therefore, if the formation of the clusters is sufficiently fast compared to the perturbation given by the spontaneous Glauber dynamics, the dynamics of the system should be close to the one without perturbation. We then expect the formation of cluster and so segregation.

Finally, we would discuss the role of $\alpha$ in the system. We conjecture that the spontaneous Kawasaki dynamics acts on the system diffusely, by steering the fluctuations which are responsible to the convergence of the system around stable configurations given by the lowest energy of $\gamma_{\infty, \beta}$. 
In Figure \ref{fig2} we observe that in the metastable phase, independently from the initial configuration, after a short time the fluctuations pushes the system to stabilize around a typical configuration close to $p=\frac12$, the unique stable minimum of $\gamma_{\infty, \beta}$. In the extreme case of $\beta=0$, we expect that the fluctuations pushed the system to one of the two stables configurations and one of the two groups disappears.

\begin{remark}\label{rem:gradflow}
The hydrodynamic limit \eqref{eq:hydrlimit} has, at least, two different formulations as a gradient flow: 
\begin{enumerate}
\item in the classical $L^2(\bbT^d)$ setting, with the potential $\mathcal{F}$ defined for $u:\bbT^d\to[0,1]$
\begin{align}
\mathcal{F}(u)=\int \alpha\|\nabla u\|^2+\gamma_{\infty,\beta}(u),
\end{align}
Equation \eqref{eq:limitePDE} can be written as $\partial_t u=-\delta\mathcal{F}(u)$ where $\delta\mathcal{F}$ denotes the Fréchet derivative of $\mathcal{F}$.

\item in a Wasserstein-like setting defined in \cite{LM13} with the entropy potential $\mathcal{H}$, for $u:\bbT^d\to]0,1]$, and $\xi:\bbT^d\to\bbR$ (seen as an element in the tangent bundle)
\begin{align}
\mathcal{H}(u)&=\frac12\int (2u\log(2u)-2u+1)\geqslant 0,&
\mathcal{K}(u)\xi&=-2\alpha\nabla \cdot (u\nabla \xi)+\frac{\gamma'_{\infty,\beta}(u)}{\log(2u)}\xi,
\end{align}
Equation \eqref{eq:hydrlimit} can be written as \[\partial_t u=-\mathcal{K}(u)(\delta\mathcal{H}).\] 
Since $\delta\mathcal{H}=\log(2u)$ and $\mathcal{K}(u)(\delta\mathcal{H})
=-2\alpha \Delta u+\gamma'_{\beta,\infty}(u).$
\end{enumerate}

Both formulations could be useful to establish a rigorous proof of the phase diagram given in Figure \ref{fig1}. In particular, along a gradient flow the potential is non-increasing, thus for all $t>0$, along a solution $u$ we have $\mathcal{F}(u(t))\le \mathcal{F}(u(t=0))$ and if $u$ converges to a stationary solution $v$, it must be a stationary point of $\cF$ (i.e. $\delta \cF(v)=0$).

For the second formulation, note that 
\begin{equation}
\frac{\gamma'_{\infty,\beta}(u)}{\log(2u)}>0, u\in]0,1]
\Longleftrightarrow
p_0(T)<p^\ell=\frac{\beta}{1+2\beta}
\Longleftrightarrow
\beta>\frac{p_0(T)}{1-2p_0(T)}.
\end{equation}
Thus, for $p_0(T)<p^\ell$, we are in the mixing phase of the diagram and $\mathcal{K}(u)$ is positive definite in the sense that 
\begin{align}
\int \xi \mathcal{K}(u)\xi=\int 2\alpha u|\nabla\xi|^2+\frac{\gamma'_{\infty,\beta}(u)}{\log(2u)}\xi^2\geqslant 0.
\end{align}
Then, one can prove that along a solution $u$, we get that:
\begin{align}
\frac{\dd}{\dd t}\mathcal{H}(u)=\int \partial _t u \delta\mathcal{H}(u)\leqslant-\int \gamma'_{\infty,\beta}(u)\log(2u)\leqslant -c\mathcal{H}(u)
\end{align}
where $c=\beta- \frac{p_0(T)}{1-2p_0(T)}>0$, and we get that $\mathcal{H}(u(t))\leqslant \mathcal{H}(u(0))e^{-ct}$. Thus $\mathcal {H}(u(t))$ goes to $0$ as $t\to\infty$, this entails that $u$ converges to the only stationary point of $\mathcal{H}$ which is the constant $\frac12$. Heuristically, it suggests that an exponential relaxation is taking place in the mixing phase of Figure \ref{fig1}.

\end{remark}

\section{Relative entropy method}\label{sec:entropymethod}
Using the {\it relative entropy} method, in this section we prove that the empirical measure $\pi^N(t)$ is close to $u^N(t)=(u^N(t,i))_{i\in \Tn}$ the solution of a suitable discrete PDE.

In order to state the main result of this section, we observe that the generator $\cL_N$ in \eqref{eq:defLtorus} can be written as $\cL_N=\cG_{N}+2\alpha N^2 \cK_{N}$ where $\cG_N$ is the generator which describes the Glauber-Schelling and spontaneous Glauber dynamics and $\cK_N$ is the generator which describes the spontaneous Kawasaki dynamics, that is, 
\begin{align}\label{def:glauberGen}
\cG_NF(\eta)=&\sum_{i\in \bbT_N^d }\Big(\1{r_i(\eta)<T,r_i(\eta)\leqslant 1-T}+\beta\Big)(F(\eta^i)-F(\eta)), \\\label{def:kawasakiGen}
\cK_N F(\eta)=& \frac 12 \sumtwo{i,j\in\bbT_N^d}{|i-j|=1}(F(\eta^{ij})-F(\eta)).
\end{align}
Let us stress that $\cG_N$ can be written as follows
\begin{equation}
\cG_N F(\eta)=\sum_{i\in \Tn} (c_i(\eta)+\beta)\big(F(\eta^i)-F(\eta)\big),
\end{equation}
where $c_i(\eta)$ is a local function which describes the dynamics (Glauber-Schelling dynamics). 
To be more precise, we write $c_i(\eta)=c_0(\tau_i \eta)$ where $c_0$ is the flipping rate of a particle at the origin, that is,
\begin{equation}
c_0(\eta):=\1{r_0(\eta)<T,r_0(\eta)\leqslant 1-T}
\end{equation}
and $(\tau_i\eta)_j=\eta_{i+j}$, likewise for a function $u=(u_i)_{i\in \Tn}$, $\tau_i$ acts on $u$, that is $(\tau_iu)_j=u_{i+j}$.
Note that $c_0$ is a random variable which take the value $0$ or $1$.

We let 
\begin{equation}
\kappa_{N}=\kappa_N(T):= \min\Big\{\lfloor K_N T \rfloor-1; \lfloor K_N(1-T)\rfloor \Big\}.
\end{equation}
We observe that $\lim\limits_{n\to +\infty} \frac{\kappa_N}{K_N}=\min(T,1-T)$.

Since $c_0(\eta)=\1{r_0(\eta)\leqslant \frac{\kappa_{N}}{K_N}}
=\1{r_0(\eta)\leqslant\frac{\kappa_{N}}{K_N},\eta_0=0}+\1{r_0(\eta)\leqslant \frac{\kappa_{N}}{K_N},\eta_0=1}$, we define
\begin{equation}\label{eq:c0+-}
c_0^+(\eta):=\1{\rho_0(\eta)\geqslant 1-\frac{\kappa_N}{K_N}} \qquad \text{and}\qquad
c_0^-(\eta):=\1{\rho_0(\eta)\leqslant \frac{\kappa_N}{K_N}},
\end{equation}
so that $c_0(\eta) =c_0^{+}(\eta)(1-\eta_0)+c_0^{-}(\eta)\eta_0,$
by \eqref{eq:rel_r_rho}. The functions $c^+$ and $c^-$ can be viewed as the rate of creation and annihilation of a particle at $i=0$ respectively.

\smallskip

For any function $u=(u_i)_{i\in \Tn}$ we define (recall that $\mathrm{B}\big(u\big)$ is the law of a Bernouilli of parameter $u\in[0,1]$)
\begin{equation}\label{eq:bernouilli}
\gu_{u}(\dd\eta)=\gu_{u}^N(\dd\eta):=\bigotimes_{i\in \Tn}\mathrm{B}\big(u_i\big)(\dd \eta_i)
\end{equation}
and we let $c_0^+(u)$ and $c_0^-(u)$ be the expectation of $c_0^+(\eta)$ and $c_0^-(\eta)$ under $\gu_u$, that is,
\begin{align}\label{eq:c^pm u}
c_0^+(u):=\mathbb{P}_{\gu_u}\Bigg(\rho_0(\eta)\geqslant 1-\frac{\kappa_N}{K_N}\Bigg)\qquad \text{and}\qquad
c_0^-(u):=\mathbb{P}_{\gu_u}\Bigg(\rho_0(\eta)\leqslant \frac{\kappa_N}{K_N}\Bigg).
\end{align}
We finally define
\begin{equation}\label{def:G}
G(u):=c_0^+(u) (1-u_0)-c_0^-(u) u_0\quad \text{and}\quad G(i, u):=G(\tau_i u).
\end{equation}
Let $u^N(t)=(u^N(t,i))_{i\in \Tn}$ be the solution of
\begin{equation}\label{eq:discretPDE}
\begin{cases}\partial_tu^N(t,i)=2\alpha N^2 \Delta u^N(t,i)+\beta(1-2u^N(t,i))+G(i,u^N)\\
u^N(t,i)=u^N_0(i),\end{cases}
\end{equation}
where $\Delta u^N(t,i)$ is the discrete Laplacian on the torus. Note that \eqref{eq:discretPDE} can be interpreted as a discretized version of \eqref{eq:hydrlimit}.

\begin{remark}
Note that \eqref{eq:discretPDE} is a first order ordinary differential equation in $\bbR^{\Tn}$. Therefore, we have a solution, locally in time, starting from every initial condition.
\end{remark}

To prove Theorem \ref{thm:convergence} we first define a discrete approximation of $u$ and we show an equivalent of Theorem \ref{thm:convergence} for $u^N$ defined in \eqref{eq:discretPDE}. More precisely, we consider $u^N$ as a measure on $\bbT^d$, that is, 
 \begin{equation}
 u^N(t,\dd v):=\frac{1}{N^d}\sum_{i\in \Tn}u^N(t,i)\delta_{\frac{i}{N}}(\dd v)\,,
 \end{equation}
The main result of this section is the following theorem.
 \begin{theorem}\label{thm:intermediateGoal}
Under Assumptions \ref{ass1} and \ref{ass2}, for every test function $\phi:\bbT^d\to \bbR$ and for every $\delta>0$ there exists $\tau>0$ such that
\[
\lim_{N\to +\infty}\mu^N\bigg(\,\Big|\langle \pi^N, \phi \rangle-\langle u^N, \phi \rangle\Big|>\delta\bigg)=0\,,\qquad \forall\, t\in [0,\tau]\,.
\]
\end{theorem}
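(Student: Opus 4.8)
\emph{Strategy.} The plan is to run the relative entropy method of \cite{JM18,JM20}, in the form used in \cite{FT19}, taking as reference profile the solution $u^N(t)$ of the discrete equation \eqref{eq:discretPDE} itself (its comparison with the continuum limit $u$ being postponed to the later sections). I would write $\nu^N_t:=\gu^N_{u^N(t)}$ for the inhomogeneous product Bernoulli measure with marginals $u^N(t,i)$ and set $\cH_N(t):=\cH(\mu^N_t\,|\,\nu^N_t)$; the goal is to prove $\cH_N(t)=O(N^{d-\epsilon})$ uniformly on some interval $[0,\tau]$, for an $\epsilon>0$, and then transfer this bound to a statement in probability via the entropy inequality.

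\emph{Differential inequality for the entropy.} First I would derive a Grönwall-type bound for $\cH_N$. Starting from Yau's inequality (Proposition~\ref{pro:YauInequality}), $\partial_t\cH_N(t)$ is bounded above, after discarding the non-positive Dirichlet form, by a sum over $i\in\Tn$ of $\mu^N_t$-expectations of local functions coming from the Schelling part $\cG_N$, from the spontaneous $\beta$-flips and from the accelerated Kawasaki part $2\alpha N^2\cK_N$, together with the term produced by $\partial_t\log(\dd\nu^N_t/\dd\nu^N_0)$. The profile $u^N(t)$ is designed exactly so that the leading (first order in the local density) contributions cancel: the discrete Laplacian in \eqref{eq:discretPDE} compensates the Kawasaki drift, $\beta(1-2u^N)$ compensates the spontaneous flips, and, crucially, $G(i,u^N)$ is by construction the $\nu^N_t$-expectation of the microscopic creation/annihilation rates $c_0^\pm$ entering $\cG_N$, so the reaction replacement is exact at this stage. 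What remains are fluctuation terms, to be controlled by $C\cH_N(t)+o(N^d)$.

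\emph{Controlling the fluctuations.} The $\beta$-Glauber and Kawasaki fluctuations I would treat by the estimates of \cite{JM18,JM20,FT19}, which require the a priori control on $u^N$ established in Section~\ref{sec:estimatesolu^N}: that $u^N(t,\cdot)$ stays in $[\epsilon',1-\epsilon']$ (a discrete comparison principle, using that at $u^N(t,i)=0$ one has $\partial_t u^N(t,i)\ge\beta>0$ and symmetrically at $u^N(t,i)=1$) and that $|\nabla u^N(t,i)|_\infty\le C/N$, propagated from Assumption~\ref{ass2}(2.) for $t\in[0,\tau]$. The genuinely new ingredient is the Schelling term: since $c_0(\eta)$ depends on the empirical density $\rho_0(\eta)$ over the window $\cV_N$ of growing cardinality $K_N$, one must bound terms of the form $\sum_i\big(c_i(\eta)-G(i,u^N)\big)(\cdots)$ with an error explicit in $K_N$, which is precisely Theorem~\ref{prop:UB_V^+}: a local large-deviation estimate for $\rho_i(\eta)$ near the thresholds $\kappa_N/K_N$ and $1-\kappa_N/K_N$ yields a per-site error of order $e^{-cK_N}$, and under Assumption~\ref{ass1}(1.) one has $K_N\le C\ell_\cV^d\le C\delta^d\log N$, so the per-site error is $O(N^{-c})$ and, summed over the $N^d$ sites, contributes $O(N^{d-\epsilon})$ (this is \eqref{eq:goalJ_t}). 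Altogether $\partial_t\cH_N(t)\le C\cH_N(t)+CN^{d-\epsilon}$ on $[0,\tau]$, so by Grönwall and $\cH_N(0)=O(N^{d-\epsilon_0})$ (Assumption~\ref{ass2}(3.)) we obtain $\cH_N(t)\le CN^{d-\epsilon}$ on $[0,\tau]$ (this is \eqref{eq:control-entropy}).

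\emph{From entropy to probability, and the main obstacle.} To conclude, fix $\phi$ and $\delta>0$ and let $A_t:=\{|\langle\pi^N_t,\phi\rangle-\langle u^N_t,\phi\rangle|>\delta\}$. Under $\nu^N_t$ one has $\langle\pi^N_t,\phi\rangle-\langle u^N_t,\phi\rangle=N^{-d}\sum_i\phi(i/N)(\eta_i-u^N(t,i))$, a sum of independent centered variables with ranges $\le\|\phi\|_\infty N^{-d}$, so Hoeffding's inequality gives $\nu^N_t(A_t)\le 2\exp(-2\delta^2 N^d/\|\phi\|_\infty^2)$. The entropy inequality $\mu^N_t(A_t)\le\big(\log 2+\cH_N(t)\big)/\log\big(1+1/\nu^N_t(A_t)\big)$ then forces $\mu^N_t(A_t)=O(N^{-\epsilon})\to 0$, which is the claim. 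The hard part will be the fluctuation control of the Schelling term, i.e.\ establishing Theorem~\ref{prop:UB_V^+} with the dependence on the window size $K_N$ tracked quantitatively — this is what fixes the entropy budget $N^{d-\epsilon}$ and forces the logarithmic growth restriction of Assumption~\ref{ass1}(1.); a secondary technical point is the propagation on $[0,\tau]$ of the regularity of $u^N$ needed to close the estimates.
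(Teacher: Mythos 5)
Your overall route is the paper's own: take $\gu^N_{u^N(t)}$ as reference measure, prove $\cH_N(t)=O(N^{d-\epsilon})$ on a short time interval via Yau's inequality, the exact cancellation built into \eqref{eq:discretPDE}, and the fluctuation bounds of Theorem \ref{prop:UB_V^+}, then conclude with the entropy inequality together with a concentration bound under the product measure (your direct Hoeffding argument for \eqref{eq:guA} is fine, and slightly more self-contained than quoting \cite{FT19}); the a priori estimates on $u^N$ you invoke are Propositions \ref{prop:u^Nin01} and \ref{prop:gradient_est}, used exactly as in the paper.

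There is, however, a concrete flaw in the way you close the entropy estimate. Your account of the Schelling fluctuation term is not what Theorem \ref{prop:UB_V^+} asserts, and the heuristic behind it is false: there is no ``per-site error of order $e^{-cK_N}$''. The quantity to control is $\int V^{\pm}(u^N,\eta)\,f_t(\eta)\,\gu_{u^N(t)}(\dd\eta)$ with $V^{\pm}=\sum_i\big(c_i^{\pm}(\eta)-c_i^{\pm}(u)\big)(\cdots)\,\omega_i$; the centered variable $c_i^{\pm}(\eta)-c_i^{\pm}(u)$ is of order one whenever $u^N(t,\cdot)$ is near the threshold $p_0(T)$ (which cannot be excluded), and $\mu^N_t$ is not a product measure, so no local large-deviation estimate for $\rho_i(\eta)$ produces a small deterministic per-site error. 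The actual mechanism is block averaging, the flow construction, and sub-Gaussian/entropy inequalities, and the price of the growing window is the factor $\ell_\cV^{d}$ multiplying the entropy in \eqref{eq:VB}, hence in \eqref{eq:goalJ_t}: $\partial_t\cH_N(t)\le C\ell_\cV^{d}\big(\cH_N(t)+O(N^{d-a})\big)$, not the clean bound $C\cH_N(t)+CN^{d-\epsilon}$ you wrote. This is not cosmetic: Gronwall then yields the factor $e^{C\ell_\cV^{d}t}\le N^{C\delta t}$, and it is precisely the smallness of $\delta$ in Assumption \ref{ass1} combined with $t\le\tau$ that keeps $\cH_N(t)=O(N^{d-\epsilon})$ and explains the ``there exists $\tau>0$'' in the statement; with your version the conclusion would hold on any fixed time interval, which the established estimates do not give. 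In short, the logarithmic growth of $\cV_N$ is a cost that tames the Gronwall exponent, not a source of polynomially small per-site errors, and your write-up uses it in the wrong place.
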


To prove Theorem \ref{thm:intermediateGoal}, the main ingredient is that the relative entropy of $\gu^N_{u^N(t)}$ (cf. \eqref{eq:bernouilli}) with respect to $\mu_t^N$ stays small in time, if it is small at $t=0$.  
 \begin{theorem}\label{thm:entropy}
 Under Assumptions \ref{ass1} and \ref{ass2}, with $\delta>0$ sufficiently small, we have that for some $\epsilon >0$ small
\begin{equation}
\cH_N(t):=\cH(\mu_t^N \, |\,  \gu^N_{u^N(t)})=O(N^{d-\epsilon}), \quad \forall\, t\in [0,\tau]
\end{equation}
with $\tau=\tau(\delta)>0$.
 \end{theorem}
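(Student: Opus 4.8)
The plan is to run Yau's relative entropy scheme in the quantitative form of Proposition~\ref{pro:YauInequality}. Writing $\psi_t^N=\dd\mu_t^N/\dd\gu^N_{u^N(t)}$ for the density, the starting point is the differential inequality
\[
\partial_t\cH_N(t)\;\le\;-\,c_\alpha N^2\, D_N(\sqrt{\psi_t^N})\;+\;\int J_t^N(\eta)\,\mu_t^N(\dd\eta),
\]
where $D_N$ is the Dirichlet form of the symmetric (Kawasaki) part of the dynamics and $J_t^N$ is the explicit error functional obtained by letting $\cL_N$ act on $\log\gu^N_{u^N(t)}$ and subtracting $\partial_t\log\gu^N_{u^N(t)}$. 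First I would plug the equation \eqref{eq:discretPDE} satisfied by $u^N$ into $\partial_t\log\gu^N_{u^N(t)}$: since the drift of \eqref{eq:discretPDE} is by construction the $\gu^N_{u^N}$-average of the microscopic currents and flip rates, the leading contribution of $J_t^N$ cancels and one is left with the decomposition \eqref{eq:goalJ_t}, namely (i) the Taylor remainders of the discrete Laplacian coming from Kawasaki, (ii) the linear fluctuation $-2\beta\sum_i(\eta_i-u^N_i(t))\,h_i(t)$ of the spontaneous Glauber term, with $h_i=\log\tfrac{u^N_i}{1-u^N_i}$ bounded by Assumption~\ref{ass2}(1), and (iii) the fluctuation of the Schelling reaction rates, $\sum_i\big(c_0^{\pm}(\tau_i\eta)-c_0^{\pm}(\tau_i u^N(t))\big)h_i(t)$.

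Terms (i) and (ii) are the routine part. For (i) I would use the a priori $C^1$/$C^2$ bounds on $u^N$ proved in Section~\ref{sec:estimatesolu^N}, together with the exact compensation of the $N^2$ time-acceleration by the $N^{-2}$ of the discrete Laplacian, so that (i) is bounded, after absorbing a fraction of $c_\alpha N^2 D_N$, by $C\,\cH_N(t)+C\,N^{d-1}$. For (ii), one applies the entropy inequality $\int V\,\dd\mu_t^N\le\lambda^{-1}\big(\cH_N(t)+\log\int e^{\lambda V}\dd\gu^N_{u^N(t)}\big)$ with $\lambda=O(1)$; since each $\eta_i-u^N_i$ is a bounded one-site function, centered under the product measure, the log-moment generating function is $O(N^d)$, again giving $C\,\cH_N(t)+C\,N^{d-1}$.

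Term (iii) is the crux, and it is here that the growth of $K_N=|\cV_N|$ enters; this is the content of Theorem~\ref{prop:UB_V^+}. Each $c_0^{\pm}(\tau_i\eta)$ is a $\{0,1\}$-valued function depending only on the $K_N$ coordinates of $\eta$ in $\tau_i\cV_N$ (so, together with $\eta_i$, an $(K_N{+}1)$-local function) and is centered under $\gu^N_{u^N}$ by the very definition \eqref{eq:c^pm u} of $c_0^{\pm}(u^N)$. I would again use the entropy inequality, but the log-moment generating function of a sum of $K_N$-local, bounded, centered functions must now be controlled quantitatively in $K_N$: colouring $\Tn$ by $O(K_N)$ classes on which the relevant blocks are disjoint, hence independent under $\gu^N_{u^N}$, and applying a Hoeffding/Bernstein bound class by class shows this log-moment generating function is at most $C\lambda^2 N^d K_N$ for $\lambda$ of order one. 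The genuine obstruction is that near the discontinuity of $g_\infty$ — where $\rho_i(u^N)$ is within $O(\sqrt{K_N^{-1}\log N})$ of a threshold $\tfrac{\kappa_N}{K_N}$ or $1-\tfrac{\kappa_N}{K_N}$ — the indicator $c_0^{\pm}(\tau_i\eta)$ has variance of order one, and (precisely in the regime $K_N\sim\log N$) the number of such sites need not be negligible. For those sites I would spend a controlled fraction of the accelerated Kawasaki Dirichlet form $c_\alpha N^2 D_N$: an entropic one-block/replacement estimate lets one replace $\rho_i(\eta)$ by a spatial average over a block of side $\ell_\cV$ before comparing it with $\rho_i(u^N)$, at a Dirichlet-form cost of order $\ell_\cV^2 N^{d-2}$ and an entropic large-deviation cost that stays admissible exactly when $\ell_\cV\le\delta(\log N)^{1/d}$ with $\delta$ small. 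Combining these ingredients yields, for some $\epsilon>0$,
\[
\int J_t^N\,\dd\mu_t^N\;\le\;-\tfrac12\,c_\alpha N^2 D_N(\sqrt{\psi_t^N})\;+\;C\,\cH_N(t)\;+\;C\,N^{d-\epsilon},
\]
which is \eqref{eq:goalJ_t}.

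Finally, inserting this into Yau's inequality and discarding the nonpositive Dirichlet term gives $\partial_t\cH_N(t)\le C\,\cH_N(t)+C\,N^{d-\epsilon}$ with $C$ independent of $N$. Since $\cH_N(0)=\cH(\mu_0^N\,|\,\gu^N_{u_0^N})=O(N^{d-\epsilon_0})$ by Assumption~\ref{ass2}(3), Grönwall's lemma yields $\cH_N(t)\le e^{Ct}\big(\cH_N(0)+C\,N^{d-\epsilon}\big)=O(N^{d-\epsilon'})$ with $\epsilon'=\min(\epsilon,\epsilon_0)$, uniformly on $[0,\tau]$, where $\tau$ is the horizon up to which the a priori bounds on $u^N$ of Section~\ref{sec:estimatesolu^N} (in particular the $N$-uniform gradient bound) and the constants in the displays above remain valid; keeping $C$ genuinely $N$-independent is what forces $\delta$ to be taken sufficiently small. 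The one hard step is (iii): one must beat the $O(1)$ variance of the indicator $c_0^{\pm}$ near its discontinuity while confining the $K_N$-dependent losses to the \emph{additive} $O(N^{d-\epsilon})$ error rather than letting them enter the \emph{multiplicative} Grönwall constant, and it is exactly the interplay of the entropic one-block estimate, the $N^2$-acceleration of the Kawasaki dynamics, and the logarithmic bound on $\ell_\cV$ that makes this possible.
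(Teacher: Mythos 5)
Your skeleton (Yau's inequality, cancellation of the leading term via \eqref{eq:discretPDE}, entropy inequality plus concentration plus the Kawasaki Dirichlet form, then Gr\"onwall with Assumption \ref{ass2}(3)) matches the paper's strategy, but the step you yourself identify as the crux — the fluctuation of the Schelling rates, i.e.\ the terms $V^{\pm}$ of \eqref{eq:defV^+}--\eqref{eq:defV^-} — is not carried out by the argument you sketch. Your colouring/Hoeffding bound $\log\int e^{\lambda V^{+}}\dd\gu\le C\lambda^{2}N^{d}K_{N}$ is correct, but it is useless as stated: with $\lambda=O(1)$ the entropy inequality returns an additive error $C\lambda K_N N^{d}\gg N^{d}$ (worse than the trivial bound $|V^+|\le N^d/\epsilon$), while taking $\lambda$ small enough to make this $O(N^{d-\epsilon})$ puts a factor $\lambda^{-1}\gtrsim N^{\epsilon}$ in front of $\cH_N(t)$ and destroys Gr\"onwall. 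Your proposed repair — an entropic one-block replacement of $\rho_i(\eta)$ by an average over a block of side $\ell_\cV$ — targets the wrong scale: $\rho_i(\eta)$ is already an average over a set of diameter $\ell_\cV$, so such a replacement reduces nothing, and the quoted Dirichlet cost $\ell_\cV^{2}N^{d-2}$ is not backed by any mechanism. What actually closes the gap (Theorem \ref{prop:UB_V^+}, following \cite{JM20}, \cite{FT19}) is a genuinely two-scale argument: one averages the \emph{other} factor $\omega^{+}$ over mesoscopic boxes of polynomial side $\ell=N^{(1-\delta_0)/d}$ (so $\rightmean{\omega^{+}}$ has variance parameter $\sim\ell^{-d}$ and $\leftmean{\theta^{+}}$ has variance parameter $\sim\ell_\cV^{d}/\ell^{d}$, giving Lemma \ref{lem:avg}), and controls the replacement error $V^{+}-V^{+,\ell}$ through the flow decomposition \eqref{eq:V+-V+l} and the carr\'e du champ $\Gamma^{\cK}_N$ (Lemma \ref{lem:diff}), with the extra geometric care that $h_i^k$ must not depend on $\eta_i,\eta_{i+e_k}$ (Assumption \ref{ass1}(2) in $d=1$, the splitting of Remark \ref{rem:remove-hyp-chiante} otherwise). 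Nothing in your sketch reproduces this, and the smallness of the variance of $c^{\pm}_i(\eta)$ away from the thresholds, which you lean on, is not available when $K_N$ stays bounded — a case the theorem covers.

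There is a second, related misreading in your Gr\"onwall step. The estimates one can actually prove give a multiplicative constant proportional to $\ell_\cV^{d}$ (see \eqref{eq:goalJ_t} and the constant $C_1\ell_\cV^{d}$ in \eqref{eq:VB}), which is \emph{not} $N$-independent: it grows like $\delta^{d}\log N$. The conclusion then reads $\cH_N(t)\le\big(\cH_N(0)+tO(N^{d-a})\big)e^{C\ell_\cV^{d}t}$ with $e^{C\ell_\cV^{d}t}\le N^{C\delta t}$, and the hypotheses ``$\delta$ sufficiently small'' and ``$\tau=\tau(\delta)$'' exist precisely so that $N^{C\delta\tau}$ does not eat the polynomial gain $N^{-a}$; they are not, as you assert, what keeps the constant $N$-independent (the a priori bounds of Section \ref{sec:estimatesolu^N} hold for all $t\ge0$, so the time restriction does not come from them). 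As written, your final inequality $\partial_t\cH_N\le C\cH_N+CN^{d-\epsilon}$ with $N$-independent $C$ is stronger than what your own estimates (or the paper's) deliver, and the proof of the statement therefore has a genuine gap at both the key fluctuation bound and the concluding Gr\"onwall argument.
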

 To prove Theorem \ref{thm:entropy} it is enough to show that
\begin{equation}\label{eq:goalJ_t}
\partial_t \cH_N(t)\le C \ell_\cV^{d} \Big(\cH_N(t)+ O(N^{d-a})\Big)
\end{equation}
with $a\in (0,1)$ if $d\ge 2$ and $a\in (0, \frac12)$ if $d=1$, indeed in this case Gronwall's inequality gives 
\begin{align}\label{eq:control-entropy}
\cH_N(t) \le \Big(\cH_N(0)+t O(N^{d-a})\Big)e^{C \ell_\cV^{d} t}.
\end{align}
Since $e^{C \ell_\cV^{d} t}\le N^{C\delta t}$ by Assumption \ref{ass1}, the proof of Theorem \ref{thm:entropy} is complete. We prove \eqref{eq:goalJ_t} in Section \ref{sec:proofThmEntropy}

 \smallskip
 
Let us observe that Theorem \ref{thm:entropy} implies Theorem \ref{thm:intermediateGoal}. Indeed, we recall the entropy inequality stated for a set $\cA$ and two measures $\gu \ll \mu$, cf. A1.8.2 of \cite{KL99} or Section 2.2 of \cite{FT19},
\begin{equation}\label{eq:entropyineqforB}
\mu(\cA)\le \frac{\log 2 + \cH(\mu \, |\,  \gu)}{\log\big(1+\frac{1}{\gu(\cA)}\big)}.
\end{equation}
For a given test function $\phi$ and $\delta >0$, we let
\begin{equation}
\cA_{N,t,\phi}^\delta:=\big\{\eta\in \Omega_N \colon \big|\langle \pi^N, \phi \rangle-\langle u^N, \phi \rangle\big|>\delta\big\},
\end{equation}
so that the proof follows by Theorem \ref{thm:entropy} and \eqref{eq:entropyineqforB} if
\begin{equation}\label{eq:guA}
\gu_{u^N(t)}^N(\cA_{N,t,\phi}^\delta)\le e^{-C_\delta N^d}.
\end{equation}
Since $u^N \in (0,1)$ (cf. Proposition \ref{prop:u^Nin01}), the proof of \eqref{eq:guA} is model independent and follows line by line the proof of Proposition 2.2 in \cite{FT19}, we omit the details.

\begin{remark}\label{rem:1.5}
Let us note that $c_0$ can be expressed as a polynomial on the variables $\eta_j$'s, as in relation (1.5) of \cite{FT19}. This remark will be useful in the sequel of the paper.
For this purpose, let $A\subset\cV_N$, we denote:
\begin{align}\label{c_A^+}
c_A^+(\eta)=\prod_{j\in A}(1-\eta_j)\prod_{j\in\bar{A}\cap\cV_N}\eta_j\quad \text{and}\quad 
c_A^-(\eta)=\prod_{j\in A}\eta_j\prod_{j\in\bar{A}\cap\cV_N}(1-\eta_j)=c_A^+(1-\eta)
\end{align}
where $1-\eta$ is the configuration with $(1-\eta)_i=1-\eta_i$ since $r_0(\eta)=r_0(1-\eta)$.
Note that 
\begin{equation}
c_A^+(\eta)=\left\{
\begin{aligned}
1&\text{ if $\eta_j=0$ for $j\in A$ and $\eta_j=1$ for $j\in \bar{A}\cap\cV_N$}\\
0&\text{ otherwise}.
\end{aligned}\right.
\end{equation}
By an abuse of notation, for a function $u=(u_j)_{j\in \Tn}$ we let
 let also $c_A^+(u)=\prod_{j\in A}(1-u_j)\prod_{j\in\bar{A}\cap\cV_N}u_j$ and accordingly for $c_A^-(u)$.
By \eqref{eq:c0+-} we have that
\begin{equation}
c_0^+(\eta)
 =\sum_{k=0}^{\kappa_N}\1{r_0(\eta)=\frac{k}{K_N},\eta_0=0}
 =(1-\eta_0)\sumtwo{A\subset \cV_N}{|A|\leqslant \kappa_N}c_A^+(\eta).
\end{equation}
Accordingly, we have 
\begin{equation}
c_0^-(\eta)
 =\sum_{k=0}^{\kappa_N}\1{r_0(\eta)=\frac{k}{K_N},\eta_0=1}
 =\eta_0\sumtwo{A\subset \cV_N}{|A|\leqslant \kappa_N}c_A^-(\eta).
\end{equation}
\end{remark}

In the rest of this section we prove Theorem \ref{thm:entropy}. The strategy that we use follows the one used to prove the analogous result in \cite{FT19} and  \cite{JM20}, but some extra-technicality is required due to the geometry of our problem.
 
\subsection{Proof of Theorem \ref{thm:entropy}}
\label{sec:proofThmEntropy}
%


To make the notation lighter, for $t\ge 0$, $j\in \Tn$, $p \in (0,1)$ and $u^N$ the solution of \eqref{eq:discretPDE}, we let
\begin{align}\label{eq:notations}
u_j(t):=u^N(t,j), \qquad \chi(p):=p(1-p), \qquad
\omega_j:=\frac{\eta_j-u_j}{\chi(u_j)}.
\end{align}
and, more generally, whenever the context is clear we omit the superscript $N$, so that $\gu_{u^N(t)}^N$ and $\mu^N_t$ will be  denoted simply by $\gu_{u(t)}$ and $\mu_t$ respectively. Therefore, throughout this section $u=u^N$, the solution of \eqref{eq:discretPDE}.

To compare $\mu$ and $\gu_u$ we introduce
\begin{equation}
\theta_{\ga}(\dd\eta):=\bigotimes_{i\in \Tn}\mathrm{B}\big(\alpha\big)
\end{equation} 
be a sequence of independent Bernoulli of parameter $\alpha\in (0,1)$ defined on the space of configurations.
We define
\begin{equation}
f_t := \frac{\dd \mu_t}{\dd \gu_{u(t)}} 
\qquad\text{and}\qquad
\psi_t :=\frac{\dd \gu_{u(t)}}{\dd \theta_{\ga}} .
\end{equation}
%
We have all the ingredients to state Yau's inequality in our context. The proof is quite standard (cf. proof of Lemma A.1 in \cite{JM20}), so that it is omitted.

\begin{proposition}\label{pro:YauInequality}
For any $t\ge 0$ we have that 
\begin{equation}\label{eq:courent1}
\partial_t \cH_N(t) \le
-\int\,  \Gamma_N\Big(\sqrt{f_t(\eta)}\Big)\, \gu_{u(t)}(\dd \eta)+
\int\,  f_t(\eta)\, \Big[\cL_N^{*,\gu_{u(t)}} \mathbf 1 -\partial_t \log\psi_t\Big]\, \gu_{u(t)}(\dd \eta),
\end{equation}
where $\cL_N^{*,\gu_{u(t)}}$ is the adjoint of $\cL_N$ with respect to the measure $\gu_{u(t)}$ and $\Gamma_N(h)(\eta)=\cL_Nh^2(\eta)-2 h(\eta)\cL_Nh(\eta)$ is the carré du champ operator.
\end{proposition}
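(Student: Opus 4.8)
The plan is to differentiate $\cH_N(t)=\int f_t\log f_t\,\dd\gu_{u(t)}$ directly, using the fixed reference measure $\theta_{\ga}$ precisely to disentangle the two sources of time dependence: that of $\mu_t$, governed by the dynamics, and that of $\gu_{u(t)}$, governed by the ODE \eqref{eq:discretPDE}. Since $\Omega_N$ is finite and, by Proposition \ref{prop:u^Nin01}, $u^N(t,i)\in(0,1)$ on $[0,\tau]$, all the measures involved charge every configuration, so $f_t,\psi_t$ are well defined, strictly positive, and depend smoothly on $t$; differentiation under the (finite) sum is therefore legitimate and no analytic subtlety arises.

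First I would set $p_t:=\dd\mu_t/\dd\theta_{\ga}=f_t\psi_t$ and write $\cH_N(t)=\int p_t\log p_t\,\dd\theta_{\ga}-\int p_t\log\psi_t\,\dd\theta_{\ga}$. Differentiating and using mass conservation $\int\partial_t p_t\,\dd\theta_{\ga}=\partial_t 1=0$ to cancel the residual term, one obtains
\[
\partial_t\cH_N(t)=\int(\partial_t p_t)\,\log f_t\,\dd\theta_{\ga}-\int f_t\,(\partial_t\log\psi_t)\,\dd\gu_{u(t)},
\]
where in the last integral I have absorbed $\psi_t\,\dd\theta_{\ga}=\dd\gu_{u(t)}$; this already produces the $\partial_t\log\psi_t$ contribution of \eqref{eq:courent1}. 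Next, the Kolmogorov forward equation for the finite-state chain reads $\partial_t p_t=\cL_N^{*,\theta_{\ga}}p_t$, and the very definition of the $\theta_{\ga}$-adjoint gives $\int(\cL_N^{*,\theta_{\ga}}p_t)\log f_t\,\dd\theta_{\ga}=\int p_t\,\cL_N(\log f_t)\,\dd\theta_{\ga}=\int f_t\,\cL_N(\log f_t)\,\dd\gu_{u(t)}$.

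The one genuinely substantial point — still elementary — is the pointwise bound
\[
f_t(\eta)\,\cL_N(\log f_t)(\eta)\;\le\;\cL_N f_t(\eta)-\Gamma_N\!\big(\sqrt{f_t}\big)(\eta),\qquad\eta\in\Omega_N.
\]
Writing $\cL_N F(\eta)=\sum_\xi c_N(\eta,\xi)(F(\xi)-F(\eta))$ with $c_N(\eta,\xi)\ge0$, one has $\Gamma_N(\sqrt a)(\eta)=\sum_\xi c_N(\eta,\xi)\big(\sqrt{a(\xi)}-\sqrt{a(\eta)}\big)^2$, and the inequality reduces, term by term in $\xi$ and after dividing by $a(\eta)=f_t(\eta)>0$, to $\log z\le 2(\sqrt z-1)$ with $z=f_t(\xi)/f_t(\eta)>0$ — that is, $\log w\le w-1$ applied to $w=\sqrt z$. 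Integrating this bound against $\gu_{u(t)}$, and using once more the definition of the adjoint in the form $\int\cL_N f_t\,\dd\gu_{u(t)}=\int f_t\,\cL_N^{*,\gu_{u(t)}}\mathbf 1\,\dd\gu_{u(t)}$, one assembles the three contributions into \eqref{eq:courent1}. I expect the main obstacle to be purely organizational: keeping straight which adjoint is taken with respect to which measure, and exploiting the choice of $\theta_{\ga}$ as a common dominating measure so that the time derivatives of $\mu_t$ and of $\gu_{u(t)}$ never become entangled.
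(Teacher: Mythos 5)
Your proof is correct and is essentially the standard derivation of Yau's inequality that the paper references (Lemma A.1 of Jara--Menezes, the proof being omitted in the paper): decompose the relative entropy via a fixed reference measure $\theta_\alpha$, use the Kolmogorov forward equation and mass conservation to isolate the $\cL_N^{*,\theta_\alpha}$ and $\partial_t\log\psi_t$ terms, and bound $f_t\,\cL_N(\log f_t)$ pointwise by $\cL_N f_t - \Gamma_N(\sqrt{f_t})$ via the elementary estimate $\log z \le 2(\sqrt z - 1)$. The only minor imprecision is the blanket claim that all measures charge every configuration: $\mu_0^N$ need not have full support, so strictly one argues for $t>0$ (where irreducibility, guaranteed by $\alpha,\beta>0$, gives full support of $\mu_t$) and this suffices for the Gronwall application; this is standard and does not affect the substance of the argument.
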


We define the {\it current} $\cJ_t=\cJ_t^N(\eta)$ as 
\begin{equation}\label{eq:defcurrent}
\cJ_t:=\cL_N^{*,\gu_{u(t)}} \mathbf 1 -\partial_t \log\psi_t.
\end{equation}
Our main goal is to estimate the current $\cJ_t$ to control the right hand side of \eqref{eq:courent1} and get \eqref{eq:goalJ_t}. 

\subsubsection{The current $\cJ_t^N$}
To control $\cL_N^{*,\gu_{u(t)}}  \mathbf 1 $ we have to compute the adjoint of $\cK_N$ and of $\cG_N$, cf. \eqref{def:glauberGen} and \eqref{def:kawasakiGen}. We follow the computations done in \cite{FT19}.
By Lemma 2.4 of \cite{FT19}, we get
\begin{equation}\label{eq:adjK}
\cK_N^{*,\gu_{u(t)}} \mathbf 1 
=-\frac12  \sumtwo{i,j\in\Tn,}{|i-j|=1}(u_i-u_j)^2\omega_j\omega_i +\sum_{i\in \Tn} (\Delta u)_i \omega_i
\end{equation}
where $(\Delta u)_i=\sum_{j\in \Tn, |i-j|=1} (u_j-u_i)$ is the discrete Laplacian.
Since $c_0$ satisfies the condition (1.5) of \cite{FT19} (see Remark \ref{rem:1.5}), by Lemma 2.5 of \cite{FT19} we get
\begin{equation}\label{eq:adjG}
\begin{split}
\cG_N^{*,\gu_{u(t)}} \mathbf 1 
&=\sum_{i\in \Tn} \left(c_i^+(\eta) (1-u_i)-c_i^-(\eta) u_i+\beta (1-2u_i)\right)\omega_i\\
&=\sum_{i\in \Tn} \left((c_i^+(\eta)-c_i^+(u)) (1-u_i)-(c_i^-(\eta)-c_i^-(u)) u_i\right)\omega_i\\
&\hspace{2.5cm}+\sum_{i\in \Tn} \left(c_i^+(u) (1-u_i)-c_i^-(u) u_i+\beta (1-2u_i)\right)\omega_i.
\end{split}
\end{equation}
In the second equality we centered the variables $c_i^+(\eta)$ $c_i^-(\eta)$ since under $\gu_u$, $c_i^+(\eta)$ and  $c_i^-(\eta)$ are Bernouilli random variables with expectation $c_i^+(u)$ and $c_i^-(u)$ respectively.
Finally, by Lemma 2.6 of \cite{FT19} we get
\begin{equation}\label{eq:delta_psi}
\partial_t \log\psi_t(\eta)=\sum_{i\in \Tn}(\partial_tu_i)\omega_i.
\end{equation}
Summarizing, we obtain the following result.
\begin{proposition}
The current $\cJ_t(\eta)$ satisfies 
\begin{equation}
\begin{split}
\cJ_t(\eta)
=\sum_{i\in \Tn}(-\partial_tu_i +  2\alpha N^2 (\Delta u)_i +\beta(1-2u_i)+  &  G(i,u))\omega_i\\
& -V(u, \eta)+V^+(u,\eta)-V^-(u,\eta),
\end{split}
\end{equation}
where $G$ was defined in \eqref{def:G} and
\begin{align}
\label{eq:defV^+}
V^+(u,\eta)&=\sum_{i\in \Tn} \left(c_i^+(\eta)-c_i^+(u) \right)(1-u_i)\omega_i,\\
\label{eq:defV^-}
V^-(u,\eta)&=\sum_{i\in \Tn} \left(c_i^-(\eta)-c_i^-(u) \right)u_i\omega_i,\\
\label{eq:defV}
V(u, \eta)&=-\alpha N^2 \sumtwo{i,j\in\Tn,}{|i-j|=1}(u_i-u_j)^2\omega_j\omega_i.
\end{align}
In particular, if $u$ satisfies \eqref{eq:discretPDE}
the current reduces to the second line.
\end{proposition}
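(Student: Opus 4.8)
\emph{Proof strategy.} The plan is to expand $\cJ_t$ directly from its definition \eqref{eq:defcurrent} by inserting the three identities \eqref{eq:adjK}, \eqref{eq:adjG} and \eqref{eq:delta_psi} that have just been recorded, and then to sort the resulting terms according to whether they are linear or quadratic in the variables $\omega_i$. Using the decomposition $\cL_N = \cG_N + 2\alpha N^2\cK_N$ and linearity of $F\mapsto\cL_N^{*,\gu_{u(t)}}F$, we first write
\[
\cJ_t(\eta) = \cG_N^{*,\gu_{u(t)}}\mathbf 1 + 2\alpha N^2\,\cK_N^{*,\gu_{u(t)}}\mathbf 1 - \partial_t\log\psi_t .
\]

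First I would substitute \eqref{eq:adjK}, so that the Kawasaki part contributes the linear term $2\alpha N^2\sum_{i\in\Tn}(\Delta u)_i\omega_i$ together with the quadratic term $-\alpha N^2\sum_{|i-j|=1}(u_i-u_j)^2\omega_j\omega_i$, which by \eqref{eq:defV} is $-V(u,\eta)$. Then I would use the already centered form \eqref{eq:adjG} of the Glauber adjoint: its middle line $\sum_{i}\big[(c_i^+(\eta)-c_i^+(u))(1-u_i)-(c_i^-(\eta)-c_i^-(u))u_i\big]\omega_i$ is exactly $V^+(u,\eta)-V^-(u,\eta)$ by \eqref{eq:defV^+}--\eqref{eq:defV^-}, while its last line $\sum_i\big[c_i^+(u)(1-u_i)-c_i^-(u)u_i+\beta(1-2u_i)\big]\omega_i$ equals $\sum_i\big[G(i,u)+\beta(1-2u_i)\big]\omega_i$, since $c_i^\pm(u)=c_0^\pm(\tau_i u)$ gives $G(i,u)=G(\tau_i u)=c_i^+(u)(1-u_i)-c_i^-(u)u_i$ by \eqref{def:G}. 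Subtracting $\partial_t\log\psi_t=\sum_i(\partial_t u_i)\omega_i$ from \eqref{eq:delta_psi} and collecting all $\omega_i$-linear contributions produces the first line of the claimed identity, while the remaining terms are $-V(u,\eta)+V^+(u,\eta)-V^-(u,\eta)$.

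For the final assertion I would simply note that if $u=u^N$ solves \eqref{eq:discretPDE}, then the bracket multiplying each $\omega_i$ in the linear line is $-\partial_t u^N(t,i)+2\alpha N^2\Delta u^N(t,i)+\beta(1-2u^N(t,i))+G(i,u^N)$, which vanishes identically, so that $\cJ_t$ reduces to $-V+V^+-V^-$.

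Since all the ingredients are already available — the three adjoint computations (Lemmas 2.4--2.6 of \cite{FT19}), the polynomial representation of $c_0$ from Remark \ref{rem:1.5} that makes Lemma 2.5 of \cite{FT19} applicable, and the observation that the $c_i^\pm(\eta)$ are $\gu_u$-Bernoulli with mean $c_i^\pm(u)$, which justifies the centering step in \eqref{eq:adjG} — this Proposition is essentially a bookkeeping step and I do not expect a genuine obstacle. The only points needing care are tracking the $2\alpha N^2$ prefactor against the $\tfrac12$ in \eqref{eq:adjK}, and the sign conventions in \eqref{eq:defV^+}--\eqref{eq:defV}.
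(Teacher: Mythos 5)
Your proposal is correct and follows exactly the paper's own route: the proposition is obtained there by simply summing $2\alpha N^2$ times \eqref{eq:adjK}, the centered form \eqref{eq:adjG}, and subtracting \eqref{eq:delta_psi}, then sorting the terms linear in $\omega_i$ (which vanish when $u$ solves \eqref{eq:discretPDE}) from the quadratic and centered ones, just as you do. One small caveat on signs: the quadratic Kawasaki contribution is $2\alpha N^2\cdot\bigl(-\tfrac12\bigr)\sum_{|i-j|=1}(u_i-u_j)^2\omega_j\omega_i=-\alpha N^2\sum_{|i-j|=1}(u_i-u_j)^2\omega_j\omega_i$, which with the convention of \eqref{eq:defV} is literally $+V(u,\eta)$ rather than $-V(u,\eta)$; this double minus is already present in the paper's own statement (so your bookkeeping reproduces the proposition as written), but it is worth fixing the convention once and for all since one-sided bounds on $V$ are used afterwards.
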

In the rest of the section we provide estimates of $V^+$, $V^-$ and $V$.

\subsubsection{\bf Estimates of $V^+$ and $V^-$}
Let us denote $\{e_1,e_2,\dots,e_d\}$ the canonical basis of $\mathbb{Z}^d$.
For $\phi:\Tn\to\bbR$ and $i\in\Tn$ and $k\in\{1,\dots, d\}$, let $\nabla_k \phi(i)=\phi(i+e_{k})-\phi(i)$. We denote $\|\nabla \phi\|_{\infty}=\max_{i,k}|\nabla_k \phi(i)|$.


We note that
\begin{equation}\label{eq:V-} 
V^-(u,\eta)=-V^+(1-u,1-\eta),
\end{equation}
 so that the bound for $V^+$ can be transferred to $V^-$, see Remark \ref{rem:transfV+toV-}. In the following we get an upper-bound for $V^+$. Denote $\theta_i^+=c_i^+(\eta)-c_i^+(u)$ and $\omega^+_i=(1-u_i)\omega_i=\frac{\eta_i-u_i}{u_i}$.

Then
\begin{align}
V^+(u,\eta)&=\sum_{i\in \Tn} \theta^+_i\omega^+_i
\end{align}
To bound $V^+(u,\eta)$ we follow the method used by Jara and Menezes \cite{JM20} and by Funaki and Tsuneda \cite{FT19}.
For this purpose let us observe that the carré du champ referred to the generator $\cL_N$, namely, $\Gamma_N(h)=\cL_Nh^2-2 h\cL_Nh$ can be decomposed as 
\begin{equation}\label{eq:decCdC}
\Gamma_N(h)=\Gamma_N^\cG(h)+2\alpha N^2\Gamma_N^\cK(h),
\end{equation} 
where $\Gamma_N^\cG(h)$ and $\Gamma_N^\cK(h)$ are the carré du champ related to the generator $\cG_N$ and $\cK_N$ respectively (cf. \eqref{def:glauberGen} and \eqref{def:kawasakiGen}).
In particular,
\begin{equation}\label{def:cdcK}
\Gamma_N^\cK(h)=\frac12\sumtwo{i,j\in \Tn \colon }{|i-j|=1}(h(\eta^{i,j})-h(\eta))^2\,.
\end{equation}
In the next result we provide the control that we need for $V^{+}$.
\begin{theorem}\label{prop:UB_V^+}
Under Assumption \ref{ass1}, we have that for any $u:\Tn\to [0,1]$ such that
\begin{enumerate}
\item $\epsilon:=1-|||u|||_{\infty}>0$ with $|||u|||_{\infty}:=\min\Big\{\|u\|_\infty, \|1-u\|_\infty\Big\}$, 
\item $\|\nabla u(i)\|_\infty\le \frac{C_0}{N}$ with $C_0$ independent of $N$,
\end{enumerate}
and for any density $f$ with respect to $\gu_u$ we have that
\begin{equation}\label{eq:VB}
\int V^{+}(u,\eta)f(\eta)\gu_u(\dd\eta)
\leqslant \delta N^2 \int \Gamma^\cK_N\big(\sqrt f\big)(\eta) \gu_u(\dd \eta)+ C_1 \ell_\cV^{d}
 \cH(f\gu_u \, |\,  \gu _u)  +C_2 N^{d-a},
\end{equation}
for any $\delta>0$ and $C_1=3^dd (d+1)\frac6{\epsilon^2}\left(\frac1{\delta}+\frac{C_0}{\epsilon^2}\right)$, $C_2= \frac{6C_0}{\epsilon^4}$ for $N>\frac{2C_0}{\epsilon^2}$. 
\end{theorem}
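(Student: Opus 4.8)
\textbf{Proof plan for Theorem \ref{prop:UB_V^+}.}

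The plan is to follow the strategy of Jara--Menezes \cite{JM20} and Funaki--Tsunoda \cite{FT19}, but to track carefully how each constant depends on the size $\ell_\cV$ of the interaction neighborhood. The starting point is the polynomial expansion of $c_i^+(\eta)$ from Remark \ref{rem:1.5}, writing $\theta_i^+=c_i^+(\eta)-c_i^+(u)=(1-\eta_i)\sum_{A\subset\cV_N,|A|\le\kappa_N}(c_A^+(\eta)-c_A^+(u))$ minus a correction, so that $V^+(u,\eta)=\sum_i\theta_i^+\omega_i^+$ becomes a sum over pairs $(i,A)$ of centered local functions supported in $i+(\{0\}\cup\cV_N)$. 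The key structural point is that $c_A^+(\eta)-c_A^+(u)$ telescopes: using the identity $\prod a_j-\prod b_j=\sum_k(a_1\cdots a_{k-1})(a_k-b_k)(b_{k+1}\cdots b_n)$, each such difference is a sum of at most $\ell_\cV^d$-many terms each of which contains a single centered factor $\eta_j-u_j=\chi(u_j)\omega_j$, and can therefore be written as a linear combination of the $\omega_j$ with coefficients that are products of $\eta$'s and $u$'s bounded by $1$. This rewrites $V^+$ as a double sum $\sum_{i}\sum_{j\in i+\cV_N}\omega_i\,\omega_j\,\Phi_{i,j}(\eta)$ (plus lower-order pieces) with $\|\Phi_{i,j}\|_\infty$ controlled, where the coefficient of $\omega_j$ picks up a combinatorial factor counting the sets $A$ through $j$, which is where the $\ell_\cV^d$-type dependence enters.

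Next I would apply the standard ``replace a bilinear term by the Kawasaki Dirichlet form plus entropy'' mechanism. For a fixed pair $(i,j)$ and a fixed bond, one uses the elementary bound, valid for any density $f$ and any $\beta_0>0$,
\begin{equation}
\int h(\eta)\,(\eta_j-\eta_i)\,f\,\dd\gu_u
\le \frac{\beta_0}{4}\int\frac{1}{f}(h(\eta))^2\bigl(\sqrt{f(\eta^{i,j})}+\sqrt{f(\eta)}\bigr)^2\dd\gu_u
+\frac{1}{\beta_0}\int\Gamma^\cK_{i,j}(\sqrt f)\dd\gu_u,
\end{equation}
together with the fact that $\gu_u$ is close to exchange-invariant under the bond swap because $\|\nabla u\|_\infty\le C_0/N$ (this is exactly where hypothesis (2) and the constant $C_0$ enter, producing the $N^{d-a}$ error term and the $C_0/\epsilon^2$ contributions to $C_1$). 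To go from a single bond $\{i,i'\}$ with $|i-i'|=1$ to a general pair $(i,j)$ with $j\in i+\cV_N$, one telescopes along a lattice path from $i$ to $j$ of length at most $\ell_\cV$; each step costs one use of the above inequality, so the number of such steps, times the number $K_N\le\ell_\cV^d$ of pairs, is what produces the overall factor $\ell_\cV^{d}$ (more precisely the $3^d d(d+1)$ geometric constant times $\ell_\cV^{d}$) in front of the entropy. The quadratic terms $\tfrac1f h^2(\cdots)^2$ are then bounded, via $\1{}$-boundedness of the local functions and $\epsilon\le u_i\le1-\epsilon$, by a constant times $\cH(f\gu_u\,|\,\gu_u)$ using the entropy inequality $\int g f\,\dd\gu_u\le \gep_1^{-1}\log\int e^{\gep_1 g}\dd\gu_u+\gep_1^{-1}\cH(f\gu_u\,|\,\gu_u)$ together with a Gaussian/Bernstein concentration bound for $\log\int e^{\gep_1 g}\dd\gu_u$ for sums of independent centered bounded variables; the $\tfrac6{\epsilon^2}$ factor and the freedom in $\delta$ versus $\beta_0$ come out of optimizing these steps.

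The main obstacle is the bookkeeping of the $\ell_\cV$-dependence: in \cite{FT19} and \cite{JM20} the interaction range is fixed, so all the combinatorial constants ($2^{K_N}$ subsets, path lengths $\ell_\cV$, number $K_N$ of neighbors) are absorbed into $O(1)$; here they must be kept explicit and shown to grow only polynomially in $\ell_\cV$, so that $C_1\ell_\cV^d\cH_N(t)$ is still summable against Gronwall under Assumption \ref{ass1} (which forces $\ell_\cV^d\le\delta\log N$). Concretely, one must check that the naive $2^{K_N}$ from summing over all $A\subset\cV_N$ never actually appears — it is killed because after telescoping, each $\omega_j$ receives contributions only from sets $A$ containing $j$ (or sitting on the path), and these are reorganized so that for fixed $j$ the total coefficient is $O(1)$ rather than exponential; equivalently, one sums $\sum_{A:|A|\le\kappa_N}c_A^+(u)=c_0^+(u)\le1$ rather than counting sets. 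Getting this cancellation right, and propagating it through the path-telescoping so that only the polynomial factor $3^d d(d+1)\ell_\cV^d$ survives in front of the entropy while the Dirichlet-form coefficient can be made an arbitrarily small $\delta$, is the technical heart of the argument and the reason Theorem \ref{prop:UB_V^+} is stated as a separate result rather than quoted from the literature.
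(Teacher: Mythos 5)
Your instinct about where the difficulty lies (tracking the $\ell_\cV$-dependence of all constants) is correct, and the combinatorial cancellation you worry about in the expansion over $A\subset\cV_N$ could indeed be arranged, but your plan has a genuine structural gap: it contains no mesoscopic averaging step, and without one the bound \eqref{eq:VB} cannot be reached. The paper's proof compares $V^+=\sum_i\theta_i^+\omega_i^+$ with its block-averaged version $V^{+,\ell}=\sum_i\leftmean{\theta^+}_i\rightmean{\omega^+}_i$ at a \emph{mesoscopic} scale $\ell\sim N^{(1-\delta_0)/d}$: the averaged term is handled by the entropy inequality together with sub-Gaussian concentration of the block variables, and it is exactly the small variance parameter of $\rightmean{\omega^+}_i$ (of order $\ell^{-d}$) that turns the fluctuation term from $O(N^d)$ into $O(N^d/\ell^d)=O(N^{d-a})$ (Lemma \ref{lem:avg}); the difference $V^+-V^{+,\ell}$ is rewritten via the Jara--Menezes flow as $\sum_{k,i}h_i^k(\omega^+_i-\omega^+_{i+e_k})$ and paid for with the Kawasaki Dirichlet form, the hypothesis $\|\nabla u\|_\infty\le C_0/N$, and a second entropy/concentration step for $\sum_i(h_i^k)^2$, whose sub-Gaussian parameter of order $\ell_\cV^d g_d(\ell)$ is where the factor $\ell_\cV^d$ actually comes from (Lemma \ref{lem:diff}); finally $\ell$ is tuned so that the flow cost $\ell^d g_d(\ell)/N$ stays bounded. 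In your plan you only telescope over the microscopic range $\ell_\cV\le\delta(\log N)^{1/d}$ and then assert that the remaining quadratic terms are bounded by a constant times $\cH(f\gu_u\,|\,\gu_u)$ via the entropy inequality plus a concentration bound. This is precisely where the argument fails: the entropy inequality always produces, besides $\gamma^{-1}\cH$, the sum of log-moment generating functions, and for $N^d$ terms of order one (whether they are the original $\theta_i^+\omega_i^+$, the $\omega_i^2$-type diagonal residues left after telescoping $\omega_j$ onto $\omega_i$, or the quadratic terms in your bond inequality) this sum is of order $N^d$ for every admissible choice of $\gamma$, not $N^{d-a}$. Purely microscopic manipulations cannot remove this $N^d$, and an error of that size would make \eqref{eq:goalJ_t} useless and destroy Theorem \ref{thm:entropy}.

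A secondary point: the polynomial expansion of Remark \ref{rem:1.5} is not needed for this estimate. The proof treats $\theta_i^+=c_i^+(\eta)-c_i^+(u)$ directly as a bounded, centered, finite-range (range $\ell_\cV$) sub-Gaussian variable; the factor $\ell_\cV^d$ in front of the entropy arises from the decomposition of $\Tn$ into independent blocks of side $2\ell+\ell_\cV$ and from the variance parameter of $h_i^k$, not from counting subsets of $\cV_N$ or path lengths inside $\cV_N$. If you want to salvage your route, you would still have to insert the comparison with a mesoscopic average (or an equivalent two-block device) before invoking the entropy inequality, at which point you are essentially reconstructing Lemmas \ref{lem:avg} and \ref{lem:diff}.
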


\begin{remark}\label{rem:transfV+toV-}
Note that if the estimate \eqref{eq:VB} holds, then it holds for $V^-$. Indeed, using \eqref{eq:V-}, the fact that $|||u|||_{\infty}=|||1-u|||_{\infty}=1-\epsilon$ and that, under $\gu_{1-u}(\dd\eta)$, $1-\eta$ has for law $\gu_u$ we get
\begin{align*}
\int V^-(u,\eta)f(\eta)&\gu_u(\dd\eta)
=-\int V^{+}(1-u,1-\eta)f(\eta)\gu_u(\dd\eta)\\
&=\int V^{+}(1-u,\eta)f(1-\eta)\gu_{1-u}(\dd\eta)\\
&\leqslant  \delta N^2 \int \Gamma^\cK_N\big(\sqrt f\big)(1-\eta) \gu_{1-u}(\dd \eta)+ C_1 \ell_\cV^{d}
 \cH(f(1-\cdot)\gu_{1-u} \, |\,  \gu _{1-u})  +C_2 N^{d-a}\\
&= \delta N^2 \int \Gamma^\cK_N\big(\sqrt f\big)(\eta) \gu_{u}(\dd \eta)+ C_1 \ell_\cV^{d}
 \cH(f\gu_{u} \, |\,  \gu _{u})  +C_2 N^{d-a}.
 \end{align*}
 \end{remark}

\begin{proof}[Proof of Theorem \ref{prop:UB_V^+}]
The proof will proceed in several steps.

\smallskip

Note that, according to Hoeffeding Inequality (Lemma \ref{lem:hoeff}), under the probability measure $\gu_u$, $\theta^+_i$ is sub-Gaussian with variance parameter $\frac14$ and $\omega^+_i$ is sub-Gaussian with variance parameter $\frac1{4u_i^2}$.

As in Jara and Menezes \cite{JM20}, we proceed to use an averaged version of $V^+$. Let $\ell>0$ and consider $\Lambda_\ell=\{0\dots,\ell-1\}^d$ the cube of size $\ell$ starting at $0$. Let $p_\ell(i)=\ell^{-d}\1{i\in\Lambda_\ell}$ and $\hat p_\ell(i)=\ell^{-d}\1{i\in-\Lambda_\ell}=p_\ell(-i)$. Then for $\phi$ defined on $\Tn$, we set
\begin{align}
\leftmean{\phi}(i)&:=p_\ell*\phi(i)=\sum_{j+k=i}p_\ell(j)\phi(k)=\sum_{j}p_\ell(j)\phi(i-j)=\ell^{-d}\sum_{j\in\Lambda_\ell}\phi(i-j)\\
\rightmean{\phi}(i)&:=\hat p_\ell*\phi(i)=\sum_{j+k=i}p_\ell(-j)\phi(k)=\sum_{j}p_\ell(-j)\phi(i-j)=\ell^{-d}\sum_{j\in\Lambda_\ell}\phi(i+j).
\end{align}

Let us denote $q_\ell=p_\ell*p_\ell$. We have
$q_\ell(i)=\ell^{-2d}|\Lambda_\ell\cap(i-\Lambda_\ell)|.$
Thus, $0\leqslant q_\ell(i)\leqslant \ell^{-d}$ and $q_\ell(i)=0$ if and only if $i\notin \Lambda_{2\ell-2}$. 
All sum being finite, we get for $\phi$ and $\psi$ defined on $\Tn$:
\begin{align*}
\sum_{i,j} \phi(i) \psi(i+j)q_\ell(j)
&=\sum_{i,j,k} \phi(i) \psi(i+j)p_\ell(k)p_\ell(j-k)
=\sum_{i,k}\phi(i)p_\ell(k)\sum_j \psi(i+j)p_\ell(j-k)\\
&=\sum_{i,k}\phi(i)p_\ell(k)(\hat p_\ell*\psi) (k+i)
=\sum_j\sum_i \phi(i)p_\ell(j-i)(\hat p_\ell*\psi) (j)\\
&=\sum_j (p_\ell*\phi)(j)(\hat p_\ell*\psi) (j)=\sum_j \leftmean{\phi}(j)\rightmean{\psi} (j).
\end{align*}

Thus let $V^{+,\ell}:=\sum_{i\in\Tn}\theta^+_i\omega^{+,\ell}_i$ with $\omega^{+,\ell}_i=\sum_j\omega^{+}_{i+j}q_\ell(j)$.  
We have that
\begin{equation}
V^{+,\ell}
=\sum_{i,j\in\Tn}\theta^+_i\omega^{+}_{i+j}q_\ell(j)
=\sum_{i\in\Tn}\leftmean{\theta^+}_i\rightmean{\omega^{+}}_i.
\end{equation}

Then Theorem \ref{prop:UB_V^+} is proved with the following two estimates.
\begin{lemma}\label{lem:avg}
Suppose that $|||u|||_{\infty}<1$ and let $\epsilon=1-|||u|||_{\infty}$. 
Then for any $\ell>\ell_\cV$
\begin{equation}\label{eq:VBell}\int V^{+,\ell}f\gu_u(\dd\eta)
\leqslant C\epsilon^{-1}\ell_\cV^{d/2}
\left( \cH(f\gu_u \, |\,  \gu _u) + \frac{N^d}{\ell^d}\right).
\end{equation}
The constant $C$ depends only on the dimension, namely it can be taken as $C=3^d 2^{d/2+2}$.
\end{lemma}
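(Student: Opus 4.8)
The plan is to control the exponential moment of $V^{+,\ell}$ under $\gu_u$ and then feed it into the entropy (Young) inequality
\[
\int V^{+,\ell}f\,\gu_u(\dd\eta)\;\le\;\frac1\gamma\,\cH(f\gu_u\,|\,\gu_u)+\frac1\gamma\log\int e^{\gamma V^{+,\ell}}\,\gu_u(\dd\eta),\qquad\gamma>0.
\]
In view of \eqref{eq:VBell} it then suffices to produce a choice of $\gamma$ of order $\epsilon\,\ell_\cV^{-d/2}$, \emph{independent of $\ell$}, for which $\log\int e^{\gamma V^{+,\ell}}\gu_u(\dd\eta)\lesssim N^d/\ell^d$. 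First I would decouple in space. Since $V^{+,\ell}=\sum_{i\in\Tn}\leftmean{\theta^+}_i\rightmean{\omega^+}_i$ and the $i$-th summand depends on $\eta$ only through a cube of side at most $2\ell+\ell_\cV\le 3\ell$ around $i$ (using $\ell>\ell_\cV$, since $\leftmean{\theta^+}_i$ reads $\bigcup_{j\in\Lambda_\ell}((i-j)+\cV_N)$ and $\rightmean{\omega^+}_i$ reads $i+\Lambda_\ell$), one partitions $\Tn$ into $M\asymp\ell^d$ classes within which these cubes are pairwise disjoint, hence within which the summands are independent under the product measure $\gu_u$. The generalized H\"older inequality with all exponents equal to $M$, followed by this independence, reduces the task to the single-site estimate $\int e^{\gamma M\,\leftmean{\theta^+}_i\rightmean{\omega^+}_i}\gu_u(\dd\eta)\le 2$, uniformly in $i$, where the relevant quantity is $t:=\gamma M\asymp\epsilon\,\ell^d\,\ell_\cV^{-d/2}$.

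The heart of the argument is to establish sub-Gaussian bounds for the two averaged fields $X:=\leftmean{\theta^+}_i$ and $Y:=\rightmean{\omega^+}_i$ under $\gu_u$. For $Y=\ell^{-d}\sum_{j\in\Lambda_\ell}\omega^+_{i+j}$ this is immediate: it averages $\ell^d$ \emph{independent} single-site mean-zero variables, each sub-Gaussian with parameter $O(\epsilon^{-2})$ by Hoeffding's inequality (as recorded above), so $Y$ is mean-zero sub-Gaussian with parameter $\sigma_Y^2=O(\epsilon^{-2}\ell^{-d})$. The delicate point — and the one where our setting genuinely differs from the bounded-range analyses of \cite{FT19,JM20} — is $X=\ell^{-d}\sum_{j\in\Lambda_\ell}\theta^+_{i-j}$: its summands are mean-zero and bounded by $1$ but are \emph{not} independent, their range of dependence being $\ell_\cV$. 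I would split $\Lambda_\ell$ into $R\le(\lceil\ell_\cV\rceil+1)^d$ residue sub-classes and write $X=\ell^{-d}\sum_{r=1}^{R}Z_r$, where each $Z_r$ is a sum of $O((\ell/\ell_\cV)^d)$ \emph{independent} sub-Gaussian$(\tfrac14)$ variables, hence sub-Gaussian with parameter $O((\ell/\ell_\cV)^d)$; then, via $X^2\le R\,\ell^{-2d}\sum_r Z_r^2$, a further generalized H\"older over $r$, and the elementary bound $\bbE_{\gu_u}[e^{\lambda Z_r^2}]\le 2$ valid whenever $\lambda$ is at most a fixed constant times the reciprocal variance parameter of $Z_r$, I would conclude $\bbE_{\gu_u}[e^{\mu X^2}]\le 2$ for all $\mu\le c\,\ell^d\ell_\cV^{-d}$ — that is, $X$ behaves as a sub-Gaussian with parameter $\sigma_X^2\asymp\ell_\cV^{d}/\ell^{d}$. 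It is this $\ell_\cV$-dependence that becomes the factor $\ell_\cV^{d/2}$ in \eqref{eq:VBell}.

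With both bounds at hand the single-site estimate is routine: for $a>0$, AM–GM gives $tXY\le\tfrac a2 t^2X^2+\tfrac1{2a}Y^2$, so by Cauchy–Schwarz $\int e^{tXY}\gu_u(\dd\eta)$ is bounded by the geometric mean of $\int e^{at^2X^2}\gu_u(\dd\eta)$ and $\int e^{a^{-1}Y^2}\gu_u(\dd\eta)$; choosing $a\asymp(\epsilon^2\ell^d)^{-1}$ makes the $Y$-integral $\le2$, and then $t\le c\,\epsilon\,\ell^d\ell_\cV^{-d/2}$ makes $at^2\le c\,\ell^d\ell_\cV^{-d}$ and hence the $X$-integral $\le2$, which holds with $\gamma=c'(d)\,\epsilon\,\ell_\cV^{-d/2}$ since $t=\gamma M$ with $M\asymp\ell^d$. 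Each single-site factor being $\le2$, one obtains $\log\int e^{\gamma V^{+,\ell}}\gu_u(\dd\eta)\le M^{-1}N^d\log 2\lesssim N^d/\ell^d$, and substituting this together with $\gamma^{-1}\asymp\epsilon^{-1}\ell_\cV^{d/2}$ into the entropy inequality yields \eqref{eq:VBell}; keeping track of the numerical constants along the way (the $3^d$ being essentially the number of decoupling classes relative to $\ell^d$, the $2^{d/2+2}$ coming from the Gaussian-type exponential moment bounds) produces the announced value $C=3^d2^{d/2+2}$. I expect the main obstacle to be precisely the sub-Gaussian control of the mesoscopic average $\leftmean{\theta^+}_i$ with the sharp parameter $\asymp\ell_\cV^d/\ell^d$: a crude bounded-difference (McDiarmid) bound would only give $O(K_N^2/\ell^d)$, which is far too weak, and it is this improved estimate — not needed in \cite{FT19,JM20} — that, through the resulting factor $\ell_\cV^{d/2}$, ultimately imposes the logarithmic growth restriction on $\ell_\cV$ in Assumption~\ref{ass1}.
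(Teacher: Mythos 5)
Your argument is correct and follows essentially the same route as the paper's: the entropy (Young) inequality, decoupling $V^{+,\ell}$ over $\asymp\ell^d$ independent spatial classes, sub-Gaussian variance parameters $\asymp\ell_\cV^d/\ell^d$ for $\leftmean{\theta^+}_i$ and $\asymp\epsilon^{-2}\ell^{-d}$ for $\rightmean{\omega^+}_i$, and a sub-Gaussian product bound yielding $\gamma^{-1}\asymp\epsilon^{-1}\ell_\cV^{d/2}$. The paper packages the identical steps slightly differently --- applying the entropy inequality once per decoupling class and invoking Lemmas \ref{lem:SGsum} and \ref{lem:SGprod} from the appendix, where you apply it once and then re-derive the same sub-Gaussian facts via residue classes, generalized H\"older and AM--GM plus Cauchy--Schwarz --- but these are purely presentational differences.
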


\begin{lemma}\label{lem:diff}
Suppose $\epsilon=1-|||u|||_{\infty}>0$ and $\|\nabla u(i)\|_\infty\le \frac{C_0}{N}$ for some $C_0>0$ independent of $N$, where $\nabla u(i)=(\nabla_k u(i))_{k=1}^d$. 
Then, for any $\ell>\ell_\cV$, with $\ell=N^{\kappa}$ for some $\kappa>0$, and $\delta >0$
\begin{multline}\label{eq:VB-VBell}
\int (V^+-V^{+,\ell})  f\gu_u(\dd\eta)  \leqslant \delta N^2 \int \Gamma_N^\cK(\sqrt f)\, \gu_u(\dd \eta) \\ 
 +C_1\ell_\cV^d \frac{g_d(\ell)\ell^d}{N} \Bigg(\Big(2+\frac{\ell_\cV}{\ell}\Big)^d 
\cH\big(f\gu_u \, \big| \, \gu_u\big)+ \frac{N^d}{\ell^d}\Bigg) +C_2 N^{d-1}
\end{multline}
where $C_2=\frac{3C_0}{\epsilon^4}\left(1+\frac{2C_0}{N\epsilon^2}\right)$ and $C_1=3^d d(d+1)\left(\frac2{\delta \epsilon^2}\left(1+\frac{C_0}{N\epsilon^2}\right)+C_2\right)$ depends only on the dimension and $g_d(\ell)$ is defined in \eqref{g_d}.
\end{lemma}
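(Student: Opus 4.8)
The plan is to rewrite $V^{+}-V^{+,\ell}$ as a sum, over nearest-neighbour edges of $\Tn$, of discrete edge-gradients of $\omega^{+}$ weighted by local averages of the centred rates $\theta^{+}_i=c^{+}_i(\eta)-c^{+}_i(u)$, and then to dispose of each edge-gradient in one of two ways: the part that is odd under the transposition of the two endpoints will be absorbed into the Kawasaki Dirichlet form $\Gamma^{\cK}_N$ by an integration by parts and a Young inequality, while the part coming from the discrete gradient of $u$ is a genuine $O(1/N)$ drift, handled through the entropy inequality. For the first step, since $\omega^{+,\ell}_i=\sum_j q_\ell(j)\,\omega^{+}_{i+j}$ with $q_\ell=p_\ell*p_\ell$ a probability kernel supported on $\Lambda_{2\ell-2}$, I fix for each $i$ and each $j\in\Lambda_{2\ell-2}$ a canonical nearest-neighbour path $\gamma_{i,i+j}$ from $i$ to $i+j$ of length $\le d(2\ell-2)$, and use the telescoping identity $\omega^{+}_i-\omega^{+}_{i+j}=\sum_{e\in\gamma_{i,i+j}}\nabla_e\omega^{+}$ (the oriented edge-difference) together with Fubini to get
\begin{equation*}
V^{+}-V^{+,\ell}=\sum_{i}\theta^{+}_i\sum_j q_\ell(j)\big(\omega^{+}_i-\omega^{+}_{i+j}\big)=\sum_{e}\Psi_e\,\nabla_e\omega^{+},\qquad \Psi_e:=\sum_i c_{e,i}\,\theta^{+}_i,
\end{equation*}
with $c_{e,i}:=\sum_{j:\,e\in\gamma_{i,i+j}}q_\ell(j)\in[0,1]$, $\sum_i c_{e,i}=\sum_j q_\ell(j)\,|\gamma_{0,j}|$ (the $q_\ell$-averaged path length $g_d(\ell)$ of \eqref{g_d}, of order $\ell$), and $c_{e,i}=0$ unless $i$ sits in a box of side $O(\ell)$ around $e$.

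Next, for an oriented edge $e=(x,x+e_k)$ I split the gradient using $\omega^{+}_i=(\eta_i-u_i)/u_i$:
\begin{equation*}
\nabla_e\omega^{+}=\frac{\eta_x-\eta_{x+e_k}}{u_x}+\eta_{x+e_k}\,\frac{u_{x+e_k}-u_x}{u_x u_{x+e_k}}=:\hat g_e(\eta)+b_e(\eta),
\end{equation*}
where $\hat g_e\circ\sigma_e=-\hat g_e$ for the transposition $\sigma_e$ of $\eta_x$ and $\eta_{x+e_k}$, and $|b_e|\le\|\nabla u\|_\infty\,\epsilon^{-2}\le C_0/(N\epsilon^{2})$ by the gradient hypothesis. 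The drift contribution $\sum_e\Psi_e b_e=\sum_i\theta^{+}_i\big(\sum_e c_{e,i}b_e\big)$ satisfies $|\sum_e c_{e,i}b_e|\le g_d(\ell)\,C_0/(N\epsilon^{2})$, so the entropy inequality together with the sub-Gaussianity of the finite-range centred variables $\theta^{+}_i$ under $\gu_u$ (Hoeffding, Lemma~\ref{lem:hoeff}, after splitting the sites into $O(\ell_\cV^d)$ sublattices carrying independent $\theta^{+}_i$) bounds $\int\sum_e\Psi_e b_e\,f\gu_u$ by a multiple of $\ell_\cV^d g_d(\ell)\ell^d N^{-1}\,\cH(f\gu_u\mid\gu_u)$ plus the deterministic error $C_2 N^{d-1}$.

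The core of the argument is the exchange term $\sum_e\int\Psi_e\hat g_e\,f\gu_u$. For each $e$, $\gu_u$ is invariant under $\sigma_e$ up to a Radon--Nikodym factor $R_e$ with $|R_e-1|\le C/(N\epsilon^{2})$; symmetrising and using the oddness of $\hat g_e$, I write $\int\Psi_e\hat g_e f\gu_u=\tfrac12\int\hat g_e\,\Psi_e(f-f\circ\sigma_e)\gu_u$ modulo two lower-order corrections (from $R_e-1$, and from $\Psi_e-\Psi_e\circ\sigma_e$, which is supported on the $O(\ell_\cV^d)$ sites near $e$), then factor $f-f\circ\sigma_e=(\sqrt f-\sqrt{f\circ\sigma_e})(\sqrt f+\sqrt{f\circ\sigma_e})$ and apply Young's inequality with weight $A_e\asymp(\delta N^{2})^{-1}$: summed over the $\asymp dN^d$ edges, the squared-difference terms contribute at most $\delta N^{2}\int\Gamma^{\cK}_N(\sqrt f)\gu_u$, while the complementary terms are $\lesssim A_e\sum_e\int\Psi_e^{2}\hat g_e^{2}\,f\gu_u$. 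To control the latter I colour the edges into $\asymp\ell^{d}(2+\ell_\cV/\ell)^{d}$ classes on which the functions $\Psi_e^{2}\hat g_e^{2}$ have pairwise disjoint supports, apply the entropy inequality once per class, and use $|\hat g_e|\le\epsilon^{-1}$ together with the sub-Gaussian bound for $\Psi_e$ (variance proxy $\lesssim\ell_\cV^d g_d(\ell)$, from $c_{e,i}\le1$ and the sublattice decomposition above); choosing the inverse temperature $\lambda\asymp(N\ell_\cV^d g_d(\ell))^{-1}$ in the entropy inequality — admissible because $\ell_\cV^d=O(\log N)\ll N\epsilon^{2}$ — converts the prefactor $A_e\asymp N^{-2}$ into the advertised $N^{-1}$ and yields an entropy term $\lesssim C_1\ell_\cV^d g_d(\ell)\ell^{d}N^{-1}(2+\ell_\cV/\ell)^{d}\,\cH(f\gu_u\mid\gu_u)$ plus a remainder $\lesssim N^{d}/\ell^{d}$. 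Adding the drift estimate gives \eqref{eq:VB-VBell}.

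I expect the main obstacle to be the bookkeeping in this last step: one must carry the exact powers of $\ell$, $\ell_\cV$ and $g_d(\ell)$ through (i) the path reorganisation, (ii) the sub-Gaussian variance proxy of $\Psi_e$ — where the finite range $\ell_\cV$ of the rates $c^{+}_i$ must genuinely be exploited, not merely their boundedness — and (iii) the simultaneous calibration of the Young weight $A_e$ (so as to extract exactly $\delta N^{2}\Gamma^{\cK}_N$) and of $\lambda$ (so as to reach the $N^{-1}$ scaling), all while keeping $C_1$ and $C_2$ independent of $N$ and $\ell$ and only polynomial in $\epsilon^{-1}$.
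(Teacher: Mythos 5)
Your argument correctly identifies the broad architecture (rewrite $V^{+}-V^{+,\ell}$ as a sum over nearest-neighbour edges of $\nabla_e\omega^{+}$ against a local weight, split off the drift part proportional to $\nabla u$, symmetrise the exchange part to extract the Kawasaki Dirichlet form via Young's inequality, and control the remaining quadratic term by the entropy inequality and sub-Gaussian concentration), but it contains a genuine gap precisely at the point where the paper relies on the Jara--Menezes flow.

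You telescope $\omega^{+}_i-\omega^{+}_{i+j}$ along a canonical nearest-neighbour path $\gamma_{i,i+j}$ of length $O(\ell)$, obtaining weights $c_{e,i}=\sum_{j:\,e\in\gamma_{i,i+j}}q_\ell(j)\in[0,1]$ with $\sum_i c_{e,i}=\sum_j q_\ell(j)\,|\gamma_{0,j}|\asymp\ell$. You then assert that the sub-Gaussian variance proxy of $\Psi_e=\sum_i c_{e,i}\theta^{+}_i$ is $\lesssim\ell_\cV^d\,g_d(\ell)$, ``from $c_{e,i}\le1$ and the sublattice decomposition''. This is where the proof breaks: from $c_{e,i}\le1$ and $\sum_i c_{e,i}\asymp\ell$ one only gets $\sum_i c_{e,i}^2\lesssim\ell$, and indeed for a monotone-path decomposition there are $\asymp\ell$ sites $i$ for which $c_{e,i}\asymp1$, so $\sum_i c_{e,i}^2\asymp\ell$. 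The sublattice decomposition (Lemma~\ref{lem:SGsum}) then gives a variance proxy $\lesssim\ell_\cV^d\cdot\ell$, \emph{not} $\ell_\cV^d\,g_d(\ell)$. You in fact write the contradiction yourself, calling $\sum_i c_{e,i}$ ``the $q_\ell$-averaged path length $g_d(\ell)$ of \eqref{g_d}, of order $\ell$'': for $d\ge2$ the quantity $g_d(\ell)$ of \eqref{g_d} is $\log\ell$ or $1$, not of order $\ell$. The point of Lemma~3.2 of \cite{JM20}, which the paper invokes, is that the flow $\Phi_\ell$ connecting $\1{0}$ to $q_\ell$ is \emph{not} a path telescoping but is chosen to minimise the discrete Dirichlet energy, giving $\sum_{|i-j|=1}|\Phi_\ell(i,j)|^2<C\,g_d(\ell)$ (essentially the Green's function gain in dimension $d$), while the $\ell^1$-norm is still $O(\ell)$. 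Your construction only reproduces the $\ell^1$ bound; it cannot reproduce the $\ell^2$ bound for $d\ge2$, and so it cannot produce the stated factor $g_d(\ell)$ in \eqref{eq:VB-VBell}. (With $\ell$ in place of $g_d(\ell)$ one could still eventually salvage a weaker version of Theorem~\ref{thm:entropy} by shrinking $\ell$, but that is a different statement; as a proof of the lemma as written, the argument fails for $d\ge2$.)

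Two secondary points. First, your symmetrisation step requires $\Psi_e$ not to depend on $\eta_x,\eta_{x+e_k}$; you wave this away as a ``lower-order correction supported on $O(\ell_\cV^d)$ sites near $e$'', but the paper has to treat this carefully via Assumption~\ref{ass1}(2.) and Remark~\ref{rem:remove-hyp-chiante} (the splitting $h_i^k=h_i'+h_i''$); a self-contained argument would need to do the same. Second, the paper bypasses your hand-rolled symmetrisation by invoking Lemma~3.5 of \cite{FT19} directly on $\sum_i h_i^k(\omega^+_i-\omega^+_{i+e_k})$, which packages the Dirichlet-form term, the $(h_i^k)^2$ term and the $\nabla u$ drift remainder $R_i^k$ in one step; your route to the same trio of terms is morally the same, but the decisive ingredient that you are missing is the $\ell^2$-efficient flow.
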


Indeed, we choose $\ell$ such that $\frac{\ell^d g_d(\ell)}{N}\le C_0$: more precisely, for any $\delta_0\in (0,1)$ we take
\begin{align}\label{eq:choix_ell}
\ell= N^{\frac12(1-\delta_0)}\quad \text{for}\quad d =1, \qquad \text{and} \qquad
\ell=N^{\frac{1}{d}(1-\delta_0)} \quad \text{for}\quad d\ge 2.
\end{align}
Since $\ell_\cV^d$ has a $\log$-growth (cf. Assumption \ref{ass1}), this choice of $\ell$ together with Lemmas \ref{lem:avg} and \ref{lem:diff} concludes the proof of Theorem \ref{prop:UB_V^+}. 
\end{proof}
We now prove Lemma \ref{lem:avg} and Lemma \ref{lem:diff}. 

\vspace{-0.5cm}
\begin{proof}[Proof of Lemma \ref{lem:avg}] 
We start by recalling that $V^{+,\ell}=\sum_{i\in\Tn}\leftmean{\theta^+}_i\rightmean{\omega^{+}}_i$.
Note that, under $\gu_u$, using Lemma~\ref{lem:SGsum}, $\rightmean{\omega^{+}}_i$ is a sub-Gaussian variable with variance parameter $\sum_{i\in\Lambda_\ell}\frac1{4u_i^2\ell^{2d}}\leqslant \frac1{4\epsilon^2\ell^d}.$

 Since $c_i^+(\eta)$ is a function of $(\eta_{i+j})_{j\in\cV_N}$,  then for $i$ and $j$ such that $|i-j|_{\infty}>\ell_\cV$, $\theta^+_{i}$ and $\theta^+_{j}$ are independent, and sub-Gaussian with variance parameter $\frac14$. 
%
%
Using Lemma \ref{lem:SGsum}, $\leftmean{\theta^+}_i$ is a sub-Gaussian variable with variance parameter 
$\frac{\ell_\cV^d}{4\ell^{d}}\left(1+\frac{\ell_\cV}{\ell}\right)^d\leqslant 2^{d-2}\frac{\ell_\cV^d}{\ell^{d}}.$

We note that all the sites involved in the averages $\leftmean{\theta^+}_i\rightmean{\omega^{+}}_i$ are in $i+Q_{\ell_\cV/2+\ell}$, where $Q_m=\{-m, \ldots, m\}^d$ is the $d$-dimensional cube centered at $0$.
In particular, for $i$ and $j$ such that $|i-j|_{\infty}>\ell_\cV+2\ell$, the corresponding averages are independent (under $\gu_u$).
Then, we can take a partition of $\Tn$ into independent sites by letting $i=j+(\ell_\cV+2\ell)k$ where $j\in \Lambda_{\ell_\cV+2\ell}$ and $k\in \Lambda_{\lceil N/(\ell_\cV+2\ell)\rceil}$.
The entropy inequality (cf. (B.3) in \cite{JM20}) gives
\begin{align*}
\int V^{+,\ell}f\gu_u(\dd\eta)
&=\sum_{j\in \Lambda_{\ell_\cV+2\ell}}\int\sum_{k\in \Lambda_{\lceil N/(\ell_\cV+2\ell)\rceil}}\leftmean{\theta^+}_{j+(\ell_\cV+2\ell)k}\rightmean{\omega^+}_{j+(\ell_\cV+2\ell)k} f\gu_u(\dd\eta)\\
&\leqslant 
\sum_{j\in \Lambda_{\ell_\cV+2\ell}}\frac1{\gamma}\Bigg(\cH(f\gu_u \, |\,  \gu _u)\\
&\qquad\quad+
\sum_{k\in \Lambda_{\lceil N/(\ell_\cV+2\ell)\rceil}}\log\int\exp\Bigg\{\gamma \leftmean{\theta^+}_{j+(\ell_\cV+2\ell)k}\rightmean{\omega^+}_{j+(\ell_\cV+2\ell)k}\Bigg\}\gu_u(\dd \eta)\Bigg)\\
&\leqslant
\frac{(\ell_\cV+2\ell)^d}{\gamma}\cH(f\gu_u \, |\,  \gu _u) + \frac1\gamma\sum_{i\in \Tn}\log\int\exp\Big\{\gamma \leftmean{\theta^+}_{i}\rightmean{\omega^+}_{i}\Big\}\gu_u(\dd \eta)
\end{align*}

According to Lemma \ref{lem:SGprod}, for $\gamma^{-1}=2^{d/2+2}\ell^{-d}\epsilon^{-1}\ell_\cV^{d/2}$, we have that
\[\log\displaystyle\int\exp\Big\{\gamma  \leftmean{\theta^+}_{i}\rightmean{\omega^+}_{i}\Big\}\gu_u(\dd \eta)\leqslant\log 3.\]
Thus we obtain
\begin{equation*}
\int V^{+,\ell}f\gu_u(\dd\eta)
\leqslant 2^{d/2+2}\epsilon^{-1}\ell_\cV^{d/2}
\left(\left(2+\frac{\ell_\cV}{\ell}\right)^d \cH(f\gu_u \, |\,  \gu _u) + \frac{N^d}{\ell^d}\log 3\right).
\end{equation*}
\end{proof}

\begin{proof}[Proof of Lemma \ref{lem:diff}] 
We first prove the Lemma by also assuming that $\cV_N\subset \mathbb{Z}^d\setminus \mathbb{N}^d$. In Remark \ref{rem:remove-hyp-chiante} at the end of the proof we show how to remove this assumption in dimension $d>1$. In dimension $d=1$ the assumption  $\cV_N\subset \mathbb{Z}\setminus \mathbb{N}$ is necessary (cf. Assumption \ref{ass1}).

We then proceed as in Jara and Menezes \cite{JM20}. 
We use the fact that 
\begin{align}
\notag
V^+-V^{+,\ell}
&=\sum_{i\in\Tn}\theta^+_i(\omega^+_i-\omega^{+,\ell}_i)\\
\label{eq:V^+-V^+l}
&=\sum_{i,j\in\Tn}\theta^+_i\omega^{+}_{i+j}(\1{0}(j)-q_\ell(j)).
\end{align}

We now use Lemma 3.2 in \cite{JM20} stating that there exists a function $\Phi_\ell : \mathbb{Z}^d\times\mathbb{Z}^d\to\bbR$ which is a flow connecting the distribution $\1{0}$ to $q_\ell$, i.e
\begin{itemize}
\item $\Phi_\ell(i,j)=-\Phi_\ell(j,i)$
\item $\sum_{j\colon |i-j|=1}\Phi_\ell(i,j)=\1{0}(i)-q_\ell(i)$
\item $\Phi_\ell(i,j)=0$ for $i,j\notin \Lambda_{2\ell-1}$
\item there is a constant $C=C(d)$ independent of $\ell$ such that 
\begin{align}\label{eq:bondPhi}
\sum_{|i-j|=1}|\Phi_\ell(i,j)|^2<C g_d(\ell)
\qquad \text{and}\qquad 
\sum_{|i-j|=1}|\Phi_\ell(i,j)|<C \ell
\end{align}
where 
\begin{equation}\label{g_d}
g_d(\ell)=\left\{\begin{aligned}
\ell & \text{ for $d=1$}\\
\log(\ell )&\text{ for $d=2$}\\
1&\text{ for $d\geqslant 3$}
\end{aligned}\right.
\end{equation}
\end{itemize}
Using the flow $\Phi_\ell$ we therefore have 
\begin{align}
V^+-V^{+,\ell}
&=\sum_{i,j\in\Tn}\theta^+_i\omega^{+}_{i+j}\sum_{k\colon|j-k|=1}\Phi_\ell(j,k)\notag \\
&=\sum_{i,j\in\Tn}\theta^+_i\omega^{+}_{i+j}\sum_{k=1}^d\Big\{\Phi_\ell(j,j+e_k)-\Phi_\ell(j-e_k,j)\Big\}\notag\\
&=\sum_{k=1}^d\sum_{i,j\in\Tn}\theta^+_i\Phi_\ell(j,j+e_k)(\omega^{+}_{i+j}-\omega^{+}_{i+j+e_k})\notag\\
&=\sum_{k=1}^d\sum_{i,j\in\Tn}\theta^+_{i-j}\Phi_\ell(j,j+e_k)(\omega^{+}_{i}-\omega^{+}_{i+e_k})\notag\\
&=\sum_{k=1}^d\sum_{i\in\Tn}h^k_{i}(\omega^{+}_{i}-\omega^{+}_{i+e_k})\label{eq:V+-V+l}
\end{align}
where 
\begin{equation}\label{eq:def-h}
h_i^k=h_{i}^{\ell,k}:=\sum_{j\in\Tn}\theta^+_{i-j}\Phi_\ell(j,j+e_k)=\sum_{j\in\Lambda_{2\ell-1}}\theta^+_{i-j}\Phi_\ell(j,j+e_k).
\end{equation}
To complete the result we need to estimate \eqref{eq:V+-V+l}.
%
%
We apply Lemma 3.5 of \cite{FT19}, whose assumptions are satisfied in our case. Note that the hypothesis of Lemma 3.5 are satisfied in our case: $u_-=\epsilon$ and $u_+=1-\epsilon$ and $h_i^k(\eta^{i,i+e_k})=h_i^k(\eta)$ for any configuration $\eta$ inasmuch $h_i^k$ is only a function of the sites $\eta_{i-j+s}$, for $s\in\cV_N$, $j\in \Lambda_{2\ell-1}$, so that it does not depend on $\eta_i$ and $\eta_{i+e_k}$ since $\cV_N \subset \bbZ^d \setminus \bbN^d$. Moreover, we observe that Lemmas 3.4 and 3.5 of \cite{FT19} apply to our case replacing $\chi(u_i)$ by $u_i$.
Therefore, for any $\alpha'>0$ we have that
\begin{align}\notag
\int h^k_{i}(\omega^{+}_{i}-\omega^{+}_{i+e_k}) f \gu_u(\dd\eta) \le 
 \frac{\alpha'}{2} &\int \big(\sqrt{f(\eta^{i,i+e_k})}-\sqrt{f(\eta})\big)^2 \gu_u(\dd \eta)\\&+\frac{C_{1,\epsilon}}{\alpha'}\int (h_i^k)^2 f \gu_u(\dd \eta) + R_{i}^k.
 \label{eq:bound_h}
\end{align}
with $C_{1,\epsilon}=\frac2{\epsilon^2}\left(1+\frac{C_0}{N\epsilon^2}\right)$ and
where the rest term $R_i^k$ is controlled as
\begin{align}\label{eq:Rik}
R_i^k\le C_{2,\epsilon} \big| \,\nabla_k u_i\, \big| \int \big| h_i^k(\eta)\big| f \gu_u(\dd\eta)\,,
\end{align}
with, for $\epsilon<\frac12$, $C_{2,\epsilon} =\frac3{\epsilon^4}\left(1+\frac{2C_0}{N\epsilon^2}\right)$.
%
We now take $\alpha' = \delta N^2$, with $\delta>0$. We get (cf. \eqref{def:cdcK})
\begin{align}
\int (V^+-V^{+,\ell})f&\gu_u(\dd \eta)  =\sum_{k=1}^d\sum_{i\in\Tn}\int h^k_{i}(\omega^{+}_{i}-\omega^{+}_{i+e_k})f\gu_u(\dd \eta) \notag \\ 
& \le  \delta N^2 \int  
\Gamma_N^\cK(\sqrt f)\, \gu_u(\dd \eta)+\frac{C_{1,\epsilon}}{\delta N^2} \sum_{k=1}^d\sum_{i\in\Tn}\int (h_i^k)^2 f \gu_u(\dd \eta) + 
\sum_{k=1}^d\sum_{i\in\Tn}R_{i}^k. 
 \label{eq:bound_h_2}
\end{align}
The first term is the same of \eqref{eq:VB-VBell}, then to conclude the proof we have to upper bound the second and the third term of \eqref{eq:bound_h_2}.

Let us start from the third one. Using that $\big| \,\nabla_k u_i\, \big|\le \frac{C_0}{N}$ and $|h_i^k|\le 1+(h_i^k)^2$ in \eqref{eq:Rik} we get 
\begin{align*}
R_i^k\le \frac{C_0C_{2,\epsilon}}{N} \int \big(1+(h_i^k)^2\big) f \gu_u(\dd\eta)\,.
\end{align*}
 So that, the last term of \eqref{eq:bound_h_2} is bounded by \[C_0C_{2,\epsilon}\left( N^{d-1} + \frac{1}{N} \sum_{k=1}^d\sum_{i\in\Tn} \int (h_i^k)^2f \gu_u(\dd\eta)\right).\] Let us note that this last term dominates the second term of \eqref{eq:bound_h_2}. To conclude the proof we need to upper-bound 
\[
 \sum_{k=1}^d\sum_{i\in\Tn} \int (h_i^k)^2f \gu_u(\dd\eta).
\]
As we noted above, $h_i^k$ depends only on the sites $\eta_{i-j+s}$, for $s\in\cV_N$ and $j\in \Lambda_{2\ell-1}$, so that the random variables $h_{i}^k$ and $h_{i^\prime}^k$ are independent if $|i-i^\prime|_\infty>(2\ell+\ell_\cV)$. We then decompose $\Tn$ as a disjoint union of cubes of size $2\ell+\ell_\cV$, that is, 
we write $i=j+(\ell_\cV+2\ell)z$ where $j\in \Lambda_{\ell_\cV+2\ell}$ and $z\in\Lambda_{\lceil N/(\ell_\cV+2\ell)\rceil}$, 
\begin{align*}
\sum_{i\in\Tn} \int (h_i^k)^2f \gu_u(\dd\eta)=
 \sum_{j\in \Lambda_{2\ell+\ell_\cV}}\sum_{z\in\Lambda_{\lceil N/(\ell_\cV+2\ell)\rceil}} \int (h_{j+z}^k)^2f \gu_u(\dd\eta).
\end{align*}
We then apply the entropy inequality and we get 
\begin{equation}
\begin{split}\label{eq:entropy_eq}
\sum_{i\in\Tn} \int (h_i^k)^2f \gu_u(\dd\eta)
& \le \frac{1}{\gamma} \sum_{j\in \Lambda_{2\ell+\ell_\cV}}\Bigg(
\cH\big(f\gu_u \, \big| \, \gu_u\big)\\
&\qquad+ \log\int\exp\Bigg\{\gamma \sum_{z\in\Lambda_{\lceil N/(\ell_\cV+2\ell)\rceil}} (h_{j+z}^k)^2
\Bigg\}\, \gu_u(\dd \eta)
\Bigg)\\
&= \frac{(2\ell+\ell_\cV)^d}{\gamma} 
\cH\big(f\gu_u \, \big| \, \gu_u\big)+  
\frac{1}{\gamma}\sum_{i\in \Tn} \log\int\exp\Big\{\gamma (h_{i}^k)^2
\Big\}\, \gu_u(\dd \eta)
\end{split}
\end{equation}
 To conclude we use a concentration inequality. 
 We have that $h_{i}^k$ is sub-Gaussian random variable, let $\sigma^2$ be its variance parameter. By Proposition \ref{prop:subgauss} we have that for any $\gamma\le \frac{1}{4\sigma^2}$, 
\[
 \int\exp\Big\{\gamma (h_{i}^k)^2
\Big\}\, \gu_u(\dd \eta) \le \log 3\, .
\]
 Moreover, to get an upper bound on the variance parameter, we use the same decomposition of the sum \eqref{eq:def-h} into subsets which are independent (since $\theta_j^+$ is a function of $\eta_{i+j}$, for $i\in\cV_N$ and is sub-Gaussian with variance parameter $\frac14$). This is done in Lemma F.12 in \cite{JM20}. 
 We then have that $\sigma^2\le C_d \ell_\cV^d g_d(\ell)$, where $C_d$
 is a constant which depends only on the dimension.
 By getting $\gamma$ as large as possible, namely $\gamma^{-1}=(d+1) \ell_\cV^d \, g_d(\ell)$ we obtain that  
 \begin{align}\label{eq:bound_h^2}
 \sum_{i\in\Tn} \int (h_i^k)^2f \gu_u(\dd\eta)
 \le (d+1)\ell_\cV^d \, g_d(\ell)\ell^d\Bigg(\Big(2+\frac{\ell_\cV}{\ell}\Big)^d 
\cH\big(f\gu_u \, \big| \, \gu_u\big)+ \frac{N^d}{\ell^d}\log 3\Bigg).
 \end{align}
 
 To conclude the proof we have to remove the assumption $\cV_N\subset \bbZ^d\setminus\bbN^d$. We show that in Remark \ref{rem:remove-hyp-chiante}.
 \end{proof}

\begin{remark}\label{rem:remove-hyp-chiante}
We now show how it is possible to remove the assumption $\cV_N\subset \mathbb{Z}^d\setminus \mathbb{N}^d$ if $d>1$.

By \eqref{eq:V+-V+l} we recall that
$V^+-V^{+,\ell}=\sum_{k=1}^d\sum_{i\in\Tn}h^k_{i}(\omega^{+}_{i}-\omega^{+}_{i+e_k})$
where $h_i^k$ is defined in \eqref{eq:def-h}.
 We also observe that, by assumption, $\ell >\ell_\cV$. 
Since $h_i^k$ is a function of the sites $\eta_{i-j+s}$, for $s\in\cV_N$, $j\in \Lambda_{2\ell-1}$, whenever $|j|>\ell_\cV$, then $\theta^+_{i-j}$ does not depend on $\eta_i$ and $\eta_{i+e_k}$.
Therefore we split $h_i^k$ into the sum of two functions $h_i^\prime$ and $h_i^\second$. The first one is independent of $\eta_i$ and $\eta_{i+e_k}$, while the second one depends on these sites, 
\begin{align}
h_i^\prime:=\sumtwo{j\in\Lambda_{2\ell-1}}{|j|>2\ell_\cV}\theta^+_{i-j}\Phi_\ell(j,j+e_k), 
\qquad \text{and}\qquad 
h_i^\second:=\sumtwo{j\in\Lambda_{2\ell-1}}{|j|\le 2\ell_\cV}\theta^+_{i-j}\Phi_\ell(j,j+e_k).
\end{align}
In such a way 
\begin{align}\label{eq:splitV+-V+l}
V^+-V^{+,\ell}&=(V^+-V^{+,\ell})^\prime + (V^+-V^{+,\ell})^\second,
\end{align}
where
\begin{align}
(V^+-V^{+,\ell})^\prime :=\sum_{k=1}^d\sum_{i\in\Tn}h^\prime_{i}(\omega^{+}_{i}-\omega^{+}_{i+e_k}) 
\quad \text{and}\quad 
(V^+-V^{+,\ell})^\second :=\sum_{k=1}^d\sum_{i\in\Tn}h^\second_{i}(\omega^{+}_{i}-\omega^{+}_{i+e_k})
\end{align}
We can apply the method used above to $(V^+-V^{+,\ell})^\prime$ obtaining that \eqref{eq:bound_h^2} holds.
We control $(V^+-V^{+,\ell})^\second $ by showing that $\int (V^+-V^{+,\ell})^\second f\gu_u(\dd \eta) =O(N^{d-\epsilon})$, for some $\epsilon>0$.
For this purpose we apply Cauchy-Swartz inequality to the measure $f \gu_u(\dd \eta)$, which gives
\begin{align}\label{eq:h_second_bound}
\int h^\second_{i}(\omega^{+}_{i}-\omega^{+}_{i+e_k}) f \gu_u(\dd \eta) & \le 
\Big(\int (h^\second_{i})^2 f \gu_u(\dd \eta) \Big)^\frac12  \Big(\int(\omega^{+}_{i}-\omega^{+}_{i+e_k})^2 f \gu_u(\dd \eta)\Big)^\frac12   
\end{align}
Therefore
\begin{align}
\notag
\int (V^+-V^{+,\ell})^\second& f\gu_u(\dd \eta)  =\sum_{k=1}^d\sum_{i\in \Tn}\int h^\second_{i}(\omega^{+}_{i}-\omega^{+}_{i+e_k}) f \gu_u(\dd \eta) \\
\notag
& \le 
\sum_{k=1}^d\sum_{i\in \Tn}  \Big(\int (h^\second_{i})^2 f \gu_u(\dd \eta) \Big)^\frac12  \Big(\int(\omega^{+}_{i}-\omega^{+}_{i+e_k})^2 f \gu_u(\dd \eta)\Big)^\frac12
\\
\label{eq:h''cauchy}
& \le 
\left(\sum_{k=1}^d\sum_{i\in \Tn}  \Big(\int (h^\second_{i})^2 f \gu_u(\dd \eta) \Big)
\sum_{k=1}^d\sum_{i\in \Tn} \Big(\int(\omega^{+}_{i}-\omega^{+}_{i+e_k})^2 f \gu_u(\dd \eta)\Big) \right)^\frac12,
\end{align}
where in the last inequality we used again Cauchy-Swartz inequality.

We observe that $(\omega^{+}_{i}-\omega^{+}_{i+e_k})=
(\frac{\eta_{i}}{u_i}-\frac{\eta_{i+e_k}}{u_{i+e_k}})=\eta_i(\frac{1}{u_i}-\frac{1}{u_{i+e_k}})+ \frac{1}{u_{i+e_k}}(\eta_{i}-\eta_{i+e_k}),$ so that 
\begin{align*}
\int(\omega^{+}_{i}-\omega^{+}_{i+e_k})^2 f \gu_u(\dd \eta) &\le 
2 \int\eta_i\Big(\frac{1}{u_i}-\frac{1}{u_{i+e_k}}\Big)^2 f \gu_u(\dd \eta)+2 \int\frac{1}{u_{i+e_k}^2}(\eta_{i}-\eta_{i+e_k})^2 f \gu_u(\dd \eta)\\
&\le C_\epsilon|\nabla u_k|_\infty^2+C_\epsilon \int(\eta_{i}-\eta_{i+e_k})^2 f \gu_u(\dd \eta)\le C^\prime_\epsilon,
\end{align*}
uniformly on $i$ and $N$. So that, 
\begin{equation}
\sum_{k=1}^d\sum_{i\in \Tn} \Big(\int(\omega^{+}_{i}-\omega^{+}_{i+e_k})^2 f \gu_u(\dd \eta)\Big) \le C^\prime_\epsilon N^d.
\end{equation}
Therefore, by \eqref{eq:h''cauchy}, to conclude the proof it is enough to show that 
\begin{equation}
\sum_{k=1}^d\sum_{i\in \Tn}  \Big(\int (h^\second_{i})^2 f \gu_u(\dd \eta) \Big)=O(N^{d-\epsilon}),
\end{equation}
for some $\epsilon>0$ small.
For this purpose we have to look more carefully at the function $\Phi_\ell(j, j+e_k)$ which defines the flow. 
We recall (cf. Appendix G of \cite{JM20}) that in the construction of the flow connecting $\1{0}$ and $p_\ell*p_\ell$,
we first define a flow $\Psi_\ell$ connecting $\1{0}$ and $p_\ell$ supported in $\Lambda_\ell$ which satisfies \eqref{eq:bondPhi} and then we define
\begin{align*}
\Phi_\ell(j,j+e_k):=
\sum_{i\in \Tn}\Psi_\ell(i,i+e_k)p_\ell(j-i) ,
\end{align*}
therefore
\begin{align*}
\sumtwo{j\in \Lambda_{2\ell-1}}{|j|\le 2\ell_\cV}\big|\Phi_\ell(j,j+e_k)\big|&\le 
\sumtwo{j\in \Lambda_{2\ell-1}}{|j|\le 2\ell_\cV}\sum_{i\in \Tn}\big|\Psi_\ell(i,i+e_k)\big|p_\ell(j-i)\\
&\le \sum_{i\in \Tn}\big|\Psi_\ell(i,i+e_k)\big| \sumtwo{j\in \Lambda_{2\ell-1}}{|j|\le 2\ell_\cV} p_\ell(j-i)
\le C \ell \frac{\ell_\cV^d}{\ell^d},
\end{align*}
where we used that $\sumtwo{j\in \Lambda_{2\ell-1}}{|j|\le 2\ell_\cV} p_\ell(j-i)\le \frac{\ell_\cV^d}{\ell^d}$ uniformly on $i$ and \eqref{eq:bondPhi} applied to $\Psi_\ell$.
We deduce that, since $|\theta^+_{i-j}|\le 2$,
\begin{align*}
\sum_{k=1}^d\sum_{i\in \Tn}  \Big(\int (h^\second_{i})^2 f \gu_u(\dd \eta) \Big)\le 
C N^d \frac{\ell_\cV^{2d}}{\ell^{2(d-1)}}.
\end{align*}
Since $\ell=N^{\kappa}$ for some $\kappa>0$ and $\ell_\cV^d$ grows logarithmically, the result follows (in dimension $d\ge 2$).

\begin{remark}
 Let us observe that in dimension $d=1$ we obtain that the last term is at least $O(N^{d})$, so our approach does not allow to remove this assumption.
\end{remark}
\end{remark}

\subsubsection{\bf Estimate of $V$} 
We show that  under the hypothesis of Theorem \ref{prop:UB_V^+}, $V(u,\eta)$ satisfies \eqref{eq:VB}.
For $i\in \Tn$ and $k\in \{1, \ldots, d\}$ we let 
\begin{align*}
\tilde \omega_i^{k}:=-\alpha N^2(u_i-u_{i+e_k})^2\omega_i.
\end{align*}
In such a way we get 
\begin{align*}
V(u,\eta)&=-\alpha N^2 \sumtwo{i, j \in \Tn}{|i-j|=1}(u_i-u_j)^2\omega_i\omega_j\\
&=-\alpha N^2 \sum_{k=1}^d\sum_{i\in \Tn}\Bigg\{(u_i-u_{i+e_k})^2\omega_i\omega_{i+e_k}
+ (u_i-u_{i-e_k})^2\omega_i\omega_{i-e_k}\Bigg\}\\
&= \sum_{k=1}^d\sum_{i\in \Tn}\Bigg\{\tilde \omega_i^{k} \omega_{i+e_k} 
+ \tilde \omega_{i-e_k}^{k} \omega_{i} \Bigg\}=2 \sum_{k=1}^d\sum_{i\in \Tn} \tilde \omega_{i-e_k}^{k} \omega_{i} .
\end{align*}
For $k\in \{1, \ldots, d\}$ we let $\tilde V^k:=\sum_{i\in \Tn} \tilde \omega_{i-e_k}^{k} \omega_{i}$ and $\tilde V^{k, \ell}:=   \sum_{i\in \Tn}  \leftmean{ \tilde \omega_{i-e_k}^{k} }\rightmean{\omega_i}$. 
We have that $\tilde V^{k, \ell}$ and $\tilde V^k-\tilde V^{k, \ell}$ satisfy \eqref{eq:VBell} and \eqref{eq:VB-VBell} respectively, which implies that $V(u,\eta)$ satisfies \eqref{eq:VB}. The proof follows the same ideas of $V^+$ and $V^-$ and it is actually simpler since we do not have to deal with the diameter of $\cV_N$. We omit the details.

\subsubsection{\bf Conclusion: Gronwall's inequality \eqref{eq:goalJ_t}}
We observe that since $\Gamma_N^\cK(\sqrt f)\le \Gamma_N(\sqrt f)$, cf. \eqref{eq:decCdC} and that the carré du champ operator is non-negative, \eqref{eq:goalJ_t} is a consequence of Proposition \ref{pro:YauInequality} and Theorem \ref{prop:UB_V^+} with $u(t)=u^N(t)$, the solutions of \eqref{eq:discretPDE}, $f=\frac{\dd \mu_t}{\dd \gu_{u(t)}}$ if we show that, like the initial profile, $u^N$ satisfies Assumption \ref{ass2} for any $t\in [0,\tau]$. This is one of the goal of the Section \ref{sec:estimatesolu^N}, see Propositions \ref{prop:u^Nin01} and \ref{prop:gradient_est}.

\section{Estimates on the solutions $(u^N)$ of \eqref{eq:discretPDE}}
\label{sec:estimatesolu^N}




We say that $u$ is supersolution of \eqref{eq:discretPDE} if $\partial_tu_i\geqslant2\alpha N^2 (\Delta u)_i+\beta(1-2u_i)+G(i,u)$ and that it is a subsolution if $\partial_tu_i\leqslant2\alpha N^2 (\Delta u)_i+\beta(1-2u_i)+G(i,u)$.

Note that any solution is both a super- and a subsolution. We have a comparaison lemma between super and subsolutions.
\begin{proposition}\label{prop:subsuper_sol_G}
Let $u$ be a supersolution, and $v$ be a subsolution such that $u(0,i)\geqslant v(0,i)$ for all $i\in\Tn$. Then, for all $t\geqslant 0$ and all $i\in\Tn$, $u(t,i)\geqslant v(t,i)$.
\end{proposition}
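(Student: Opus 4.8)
The statement is a comparison principle for the ODE system \eqref{eq:discretPDE} on the finite set $\Tn$. The standard route is the following. Set $w(t,i) = v(t,i) - u(t,i)$ and suppose, for contradiction, that $w$ becomes positive somewhere. Since $\Tn$ is finite and the solutions are continuous (indeed $C^1$ in $t$), there is a first time $t_0 > 0$ and a site $i_0$ at which $w(t_0,i_0) = \max_{j} w(t_0,j) = 0^+$ is attained with value $>0$; more precisely one looks at $M(t) := \max_{j\in\Tn} w(t,j)$, which is Lipschitz, has $M(0) \le 0$, and one wants to show $M(t) \le 0$ for all $t$. The plan is to derive a differential inequality of Gronwall type for $M(t)$: at a point $i_0$ realizing the maximum, the discrete Laplacian satisfies $(\Delta w)_{i_0} = \sum_{|j-i_0|=1}(w(t,j)-w(t,i_0)) \le 0$, so the diffusion term cannot push the maximum up. The linear term $\beta(1-2u)$ contributes $-2\beta w(t,i_0)$, which is favourable (negative) when $w(t,i_0)>0$.

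The only genuine issue is the reaction term $G(i,u)$, which is \emph{not} continuous (it is built out of the indicator functions $c_0^\pm$) — so one cannot simply invoke Lipschitz continuity of the nonlinearity. Here is where I would exploit the structure of $G$ established in the excerpt: recall from \eqref{def:G} and \eqref{eq:c^pm u} that $G(u) = c_0^+(u)(1-u_0) - c_0^-(u)u_0$ with $c_0^+(u) = \mathbb P_{\gu_u}(\rho_0(\eta) \ge 1 - \kappa_N/K_N)$ and $c_0^-(u) = \mathbb P_{\gu_u}(\rho_0(\eta) \le \kappa_N/K_N)$. The key observation is a \emph{monotonicity} property: $c_0^+(u)$ is nondecreasing in each coordinate $u_j$, $j\in\cV_N$ (increasing the density of ones can only increase the probability that $\rho_0 \ge$ threshold), while $c_0^-(u)$ is nonincreasing in each such coordinate; moreover $c_0^\pm$ do not depend on $u_0$. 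Consequently the map $u \mapsto G(i,u)$, while discontinuous, is ``quasi-monotone'' in the sense required for comparison: if $v \ge u$ coordinatewise and $v_{i} = u_{i}$, then $c_i^+(v) \ge c_i^+(u)$ and $c_i^-(v) \le c_i^-(u)$, hence $G(i,v) \ge G(i,u)$. This is exactly the cooperativity condition that makes a scalar (or quasi-monotone) comparison principle go through even without continuity of the reaction term.

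So the steps, in order, are: (1) reduce to showing $M(t) := \max_j (v-u)(t,j) \le 0$ for all $t\ge 0$, given $M(0)\le 0$; (2) at a time $t$ and site $i_0$ achieving the max with $M(t)>0$, bound $\tfrac{d}{dt}$ of (a smoothed version of) $M$ using the supersolution/subsolution inequalities: the difference of diffusion terms at $i_0$ is $\le 0$, the difference of linear terms is $-2\beta M(t) < 0$, and — crucially — the difference $G(i_0, v) - G(i_0, u)$: because at $i_0$ we have $v(t,i_0) - u(t,i_0) = M(t) \ge w(t,j)$ for all $j$, the configuration $v(t,\cdot)$ restricted to $\cV_N + i_0$ dominates $u(t,\cdot) + $ constant pointwise, and since adding the same constant to all coordinates and the monotonicity of $c_{i_0}^\pm$ we can compare; one has to be a little careful because $v$ need not dominate $u$ coordinatewise (only $v - u \le M$), but by considering $\hat u := u + M \ge v$ on $\cV_N + i_0$ with $\hat u_{i_0} = v_{i_0}$... — actually the clean way is to argue that $G$ is Lipschitz \emph{except} across the two jump hypersurfaces and that the jumps are in the ``right direction'', i.e. exploit that $c_{i}^+(u)(1-u_i) - c_i^-(u)u_i$ as a function of $u$ restricted to the relevant coordinates is such that its only discontinuities are downward jumps as $u$ increases through the thresholds, which preserves the sign we need. (3) Conclude via Gronwall that $M(t) \le M(0) e^{Ct} \le 0$ — or more directly, since every term in $\tfrac{d^+}{dt} M(t)$ at a positive maximum is $\le C\, M(t)$ with no positive constant term, $M$ cannot cross zero.

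\textbf{Main obstacle.} The delicate point is handling the discontinuity of $G$ rigorously in the maximum-principle argument. One cannot differentiate $M(t)$ naively, and at the crossing of a threshold the reaction term can jump. I would make this precise either (a) by first proving the comparison for the smooth approximations $G$ replaced by the mollified nonlinearities (which the excerpt introduces around \eqref{eq:limitePDE-Kter}, though there for the PDE) and passing to the limit, or — more likely what the authors do — (b) by a direct argument: show that whenever $M(t_0) = 0$ is first attained, on a small interval $[t_0, t_0+\epsilon]$ one has the inequality $\tfrac{d}{dt}(v_{i_0} - u_{i_0}) \le 2\alpha N^2 (\Delta w)_{i_0} - 2\beta w_{i_0} + [G(i_0,v) - G(i_0,u)]$ and the bracket is $\le 0$ by the coordinatewise monotonicity of $c^\pm$ (valid pointwise, discontinuities included, since monotone functions need not be continuous), so $\tfrac{d^+}{dt}M(t_0) \le 0$; iterating gives $M \le 0$. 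The monotonicity of $c_0^\pm$ in the coordinates $u_j$, $j\in\cV_N$, is the crux and should be stated and used explicitly.
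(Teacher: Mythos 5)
Your core argument is the same as the paper's: the paper proves this by a first--contact argument, using exactly the ``cooperativity'' you identify --- namely that $c_0^+(u)$ is nondecreasing and $c_0^-(u)$ nonincreasing in each coordinate $u_j$, $j\in\cV_N$, and that they do not depend on $u_0$; this is stated as Proposition \ref{prop:G1} and proved by the uniform-variable coupling you describe. At a time and site where $u(t,i)=v(t,i)$ with $u\geqslant v$ everywhere, the discrete Laplacian of $u-v$ is $\geqslant 0$, the $\beta$-term vanishes, and $G(i,u)\geqslant G(i,v)$ by that monotonicity, so $\partial_t(u-v)(t,i)\geqslant 0$; this is your option (b), and it is the paper's proof.

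Two corrections, though. First, your ``main obstacle'' is spurious: $G(i,u)$ is \emph{not} discontinuous. The indicators define $c_0^\pm(\eta)$, but $G$ is built from $c_0^\pm(u)$, the \emph{expectations} under $\gu_u$, which are polynomials in the coordinates $(u_j)_{j\in\cV_N}$ (cf.\ Remark \ref{rem:1.5} and \eqref{eq:c^pm u}); the discontinuous nonlinearity $g_\infty$ only appears in the $N\to\infty$ limit \eqref{eq:hydrlimit}, not in the discrete system \eqref{eq:discretPDE}. So no mollification, no ``jumps in the right direction'' discussion, and no passage to the limit are needed; for fixed $N$ one could even run a plain Gronwall argument using that $G$ is Lipschitz on $[0,1]^{\Tn}$. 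Second, your primary plan (step (2)) of comparing $G(i_0,v)$ with $G(i_0,u)$ at a site where the difference attains a \emph{strictly positive} maximum does not go through as stated: the quasi-monotonicity of Proposition \ref{prop:G1} requires $v_{i_0}=u_{i_0}$ together with coordinatewise ordering on $\cV_N+i_0$, and you acknowledge but never resolve this (the sentence trails off into ``the clean way is to argue\dots''). The resolution is simply to argue at the first contact time, where the difference at the touching site is $0$, i.e.\ your fallback (b); stated that way, your proof coincides with the paper's.
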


\begin{proof}
Since $u$ and $v$ have derivative in time they are continuous, consider $i$ and $t$ such that $u(t,i)=v(t,i)$ and $u(t,j)\geqslant v(t,j)$. Then, we have $G(i,u(t))\geqslant G(i,v(t))$ by Proposition \ref{prop:G1}, and thus
\begin{align} 
\partial_t(u(i,t)-v(i,t))
&=2\alpha N^2\sum_{j,|i-j|=1}(u(j,t)-v(j,t))+G(i,u(t))-G(i,v(t))
\geqslant 0.
\end{align}
This proves that $u$ stays above $v$ at all time.
\end{proof}

From the two previous propositions we conclude:
\begin{proposition}\label{prop:u^Nin01}
Let $\delta=\frac{\beta}{1+4\beta}$ and suppose $0<\delta<\frac1{4e}$. Let also $4\delta\leqslant T\leqslant 1-4\delta$ and $0\leqslant\epsilon<2\delta$. For all $K_N>\frac{|\log(\epsilon/2)|}{\epsilon^2}$, and all $N$, consider the solution $(u^N(t,i))_{i\in \Tn}$ of Equation \eqref{eq:discretPDE} starting from $u_0\in[\epsilon,1-\epsilon]$, then we have that $u^N(t,i)\in[\epsilon,1-\epsilon]$ for all $i$ and $t\geq 0$.
\end{proposition}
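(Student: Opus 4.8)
The plan is to obtain the invariance of $[\epsilon,1-\epsilon]$ from the comparison principle, Proposition~\ref{prop:subsuper_sol_G}. It suffices to check that the constant function $\epsilon\mathbf 1$ is a subsolution of \eqref{eq:discretPDE} and that $(1-\epsilon)\mathbf 1$ is a supersolution: since by hypothesis $\epsilon\leqslant u^N_0(i)\leqslant 1-\epsilon$ for every $i$, Proposition~\ref{prop:subsuper_sol_G} applied to the pair $(u^N,\epsilon\mathbf 1)$ then yields $u^N(t,i)\geqslant\epsilon$, and applied to the pair $((1-\epsilon)\mathbf 1,u^N)$ it yields $u^N(t,i)\leqslant 1-\epsilon$, for all $i$ and all $t\geqslant 0$.

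For a constant configuration $c\mathbf 1$ the discrete Laplacian vanishes and $G(i,c\mathbf 1)=G(c\mathbf 1)=c_0^+(c\mathbf 1)(1-c)-c_0^-(c\mathbf 1)\,c$ does not depend on $i$, so the subsolution inequality for $\epsilon\mathbf 1$ collapses to the single scalar inequality
\[
\beta(1-2\epsilon)+c_0^+(\epsilon\mathbf 1)(1-\epsilon)-c_0^-(\epsilon\mathbf 1)\,\epsilon\ \geqslant\ 0 .
\]
Furthermore, since under $\gu_{(1-c)\mathbf 1}$ the variable $\rho_0$ has the law of $1-\rho_0$ under $\gu_{c\mathbf 1}$, one has $c_0^+((1-c)\mathbf 1)=c_0^-(c\mathbf 1)$ and $c_0^-((1-c)\mathbf 1)=c_0^+(c\mathbf 1)$, and a short computation shows that the supersolution inequality for $(1-\epsilon)\mathbf 1$ is precisely the same scalar inequality. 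The whole statement therefore reduces to establishing it. (When $\epsilon=0$ it is trivial, because then $\rho_0\equiv 0$ under $\gu_{\mathbf 0}$, so $c_0^+(\mathbf 0)=0$ and the inequality reads $\beta\geqslant 0$.)

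To prove the scalar inequality I would estimate $c_0^\pm(\epsilon\mathbf 1)$ by concentration. Under $\gu_{\epsilon\mathbf 1}$ the quantity $\rho_0=K_N^{-1}\sum_{j\in\cV_N}\eta_j$ is the empirical mean of $K_N$ i.i.d.\ Bernoulli$(\epsilon)$ variables, so Hoeffding's inequality (Lemma~\ref{lem:hoeff}) gives exponential bounds on $c_0^+(\epsilon\mathbf 1)=\mathbb P(\rho_0\geqslant 1-\kappa_N/K_N)$ and on $1-c_0^-(\epsilon\mathbf 1)=\mathbb P(\rho_0>\kappa_N/K_N)$. The hypotheses are there precisely to make these estimates bite: from $4\delta\leqslant T\leqslant 1-4\delta$ one gets $\kappa_N\geqslant 4\delta K_N-2$ and $\kappa_N/K_N\to\min(T,1-T)\in[4\delta,\tfrac12]$, so both thresholds $\kappa_N/K_N$ and $1-\kappa_N/K_N$ lie well away from the mean $\epsilon$ (recall $\epsilon<2\delta$); the condition $\delta<\tfrac1{4e}$, together with $K_N>|\log(\epsilon/2)|/\epsilon^2$, ensures $K_N$ is large enough (in particular $\kappa_N/K_N>\epsilon$) and makes the Hoeffding bounds at most of order $\epsilon/2$; and the value $\delta=\beta/(1+4\beta)$ is chosen so that the ``bulk'' part $\beta(1-2\cdot)-(\cdot)$ of the inequality is strictly positive on the relevant range. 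Combining these with $c_0^+(\epsilon\mathbf 1)\geqslant 0$ and the Hoeffding upper bound on $1-c_0^-(\epsilon\mathbf 1)$ closes the argument.

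I expect the scalar inequality to be the delicate point. The dominant term is $\beta(1-2\epsilon)-c_0^-(\epsilon\mathbf 1)\epsilon$, which is close to $\beta(1-2\epsilon)-\epsilon$, so its positivity must be teased out of the exact threshold imposed on $\epsilon$ and the small but nonzero corrections coming from $c_0^+(\epsilon\mathbf 1)>0$ and $1-c_0^-(\epsilon\mathbf 1)>0$: it is here — and not in the soft comparison step — that one genuinely needs the quantitative Hoeffding bounds rather than just $0\leqslant c_0^\pm\leqslant 1$, and where the precise hypotheses on $K_N$, $T$ and $\delta$ enter. Once the scalar inequality is in hand, the conclusion follows purely from Proposition~\ref{prop:subsuper_sol_G} and the assumption that $u^N_0$ takes values in $[\epsilon,1-\epsilon]$.
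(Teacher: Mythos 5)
Your overall strategy is the same as the paper's: apply the comparison principle of Proposition \ref{prop:subsuper_sol_G} with the constant barriers $\epsilon\mathbf 1$ and $(1-\epsilon)\mathbf 1$, reduce to the scalar condition $\beta(1-2p)+g_{K_N}(p)\geqslant 0$ at $p=\epsilon$ (and $\leqslant 0$ at $p=1-\epsilon$), and control $g_{K_N}(\epsilon)=c_0^+(\epsilon\mathbf 1)(1-\epsilon)-c_0^-(\epsilon\mathbf 1)\epsilon$ by binomial concentration, which is exactly the content of Proposition \ref{prop:g_K1} that the paper invokes. Your symmetry observation $c_0^{\pm}((1-c)\mathbf 1)=c_0^{\mp}(c\mathbf 1)$, reducing both boundary checks to a single scalar inequality, is correct and a mild streamlining of the paper's argument.

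The gap is precisely in the step you flag as delicate, and your proposed way of closing it does not work. The inequality to be proved is $\beta(1-2\epsilon)\geqslant c_0^-(\epsilon\mathbf 1)\epsilon-c_0^+(\epsilon\mathbf 1)(1-\epsilon)$. A Hoeffding bound on $1-c_0^-(\epsilon\mathbf 1)$ is useless here: it says $c_0^-(\epsilon\mathbf 1)$ is close to $1$, which pushes the right-hand side up towards $\epsilon$ (and $c_0^+(\epsilon\mathbf 1)(1-\epsilon)$ is itself exponentially small in $K_N$, since $\epsilon<2\delta\leqslant p_0(T)/2$). Because one always has $g_{K_N}(\epsilon)\geqslant-\epsilon$ (using only $c_0^+\geqslant 0$, $c_0^-\leqslant 1$, no concentration at all), and conversely $g_{K_N}(\epsilon)=-\epsilon+O(e^{-cK_N})$, the subsolution property at level $\epsilon$ is, up to exponentially small corrections, equivalent to $\beta(1-2\epsilon)\geqslant\epsilon$, i.e.\ to $\epsilon\leqslant\frac{\beta}{1+2\beta}$. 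Your claim that the choice $\delta=\frac{\beta}{1+4\beta}$ makes the ``bulk part'' $\beta(1-2\epsilon)-\epsilon$ positive on the whole allowed range is false, since $2\delta=\frac{2\beta}{1+4\beta}>\frac{\beta}{1+2\beta}$: for $\epsilon$ in the window $\bigl(\frac{\beta}{1+2\beta},2\delta\bigr)$ the scalar inequality fails, and no concentration estimate (which only contributes $O(e^{-cK_N})$) can rescue it. Indeed $\beta(1-4\delta)=\delta$, so Proposition \ref{prop:g_K1} ($g_{K_N}(2\delta)<-\delta$) actually shows that the constant $2\delta$ is a \emph{super}solution, i.e.\ the drift points outward near that level. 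To be fair, the paper's own ``it is easy to check'' hides the same difficulty; but the correct conclusion of this line of proof is the invariance of $[\epsilon,1-\epsilon]$ for $\epsilon\leqslant\frac{\beta}{1+2\beta}$, where, contrary to what you expect, the soft bound $0\leqslant c_0^{\pm}\leqslant 1$ suffices and no quantitative Hoeffding estimate is needed at all.
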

Note that the proposition holds for $\epsilon=0$.

\begin{proof}
Let $p\in[0,1]$, and set $u(i)=p$, for all $i$. Using that $g_{K_N}(p)=G(i,p)$, we have that
\begin{itemize}
\item if $g_{K_N}(p)+\beta(1-2p)\geqslant 0$, then $u$ is a subsolution;
\item if $g_{K_N}(p)+\beta(1-2p)\leqslant 0$, then $u$ is a supersolution.
\end{itemize}

Then, by using the result of Proposition \ref{prop:g_K1} on the analysis of $g_K(p)$ close to $p=0$ and $p=1$, it is easy to check that $u(i)=\epsilon$ is a subsolution and $u(i)=1-\epsilon$ is supersolution, for $\epsilon$ satisfying the hypothesis of the proposition.

In particular, for $p=1$, $g_{K_N}(1)=0$ , we have a supersolution. For $p=0$, $g_{K_N}(0)=0$, we have a subsolution.
\end{proof}
%

In the next result, we show that if $u$ solves \eqref{eq:discretPDE} and $\|\nabla u(0,i)\|_\infty\le \frac{C}{N}$, then $\|\nabla u(t,i)\|_\infty\le \frac{C\sqrt t}{N}$ for any $t>0$.

\begin{proposition}\label{prop:gradient_est}
Let $u$ be a solution of \eqref{eq:discretPDE} with $u(0)=u_0$ such that, there exists $C_0>0$ for which $|\nabla u_0|_{\infty}\leqslant \frac{C_0}{N}$. Then, there exists $C>0$ such that  $\|\nabla u(t,i)\|_\infty\le \frac{C_0+C\sqrt t}{N}$ for all $t\geqslant 0$.
\end{proposition}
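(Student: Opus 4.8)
The plan is to exploit the parabolic smoothing of the accelerated discrete heat semigroup $P^N_t:=e^{2\alpha N^2 t\Delta}$ on $\Tn$ through the Duhamel (variation–of–constants) formula for \eqref{eq:discretPDE}. First I would record that, by Proposition~\ref{prop:u^Nin01} (and standard ODE theory), the solution $u=u^N$ exists for all $t\ge0$ and takes values in $[0,1]$, so that the reaction source $R(t,i):=\beta(1-2u(t,i))+G(i,u(t))$ is bounded uniformly in $t$ and $N$, say $\|R(t)\|_\infty\le\beta+1=:M$ (indeed $0\le c^\pm_0(u)\le1$ and $0\le u_0\le1$ give $|G|\le1$). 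Writing $u(t)=P^N_t u_0+\int_0^t P^N_{t-s}R(s)\,\dd s$ and applying a finite difference $\nabla_k$, the term coming from the initial data is $\nabla_k P^N_t u_0=P^N_t(\nabla_k u_0)$ — $\nabla_k$ commutes with $P^N_t$, since both commute with lattice translations — and its sup–norm is at most $\|\nabla_k u_0\|_\infty\le C_0/N$ because the Markov semigroup $P^N_t$ is an $\ell^\infty$–contraction. So the whole point is to bound the $\nabla_k$–derivative of the Duhamel integral by $O(\sqrt t/N)$.

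For that, the key input I would establish is the uniform estimate $\|\nabla_k p^N_r\|_{\ell^1(\Tn)}\le\min\{2,\,C_{d,\alpha}/(N\sqrt r)\}$ for all $r>0$, where $p^N_r$ is the kernel of $P^N_r$ and $C_{d,\alpha}$ does not depend on $N$. The cleanest route is periodization: $p^N_r$ is the sum over $N\mathbb Z^d$ of the $\mathbb Z^d$ heat kernel $q_s$ of $e^{s\Delta_{\mathbb Z^d}}$ at $s=2\alpha N^2 r$, so the triangle inequality gives $\|\nabla_k p^N_r\|_{\ell^1(\Tn)}\le\|\nabla_k q_s\|_{\ell^1(\mathbb Z^d)}$, and on $\mathbb Z^d$ one has the classical bound $\|\nabla_k q_s\|_{\ell^1(\mathbb Z^d)}\le\min\{2,C/\sqrt s\}$ (for instance by tensorizing over coordinates and using the explicit one–dimensional kernel $q^{(1)}_s(n)=e^{-2s}I_n(2s)$, for which $\sum_n|q^{(1)}_s(n+1)-q^{(1)}_s(n)|=2e^{-2s}I_0(2s)\le\min\{2,C/\sqrt s\}$, or via the local central limit theorem with Gaussian tails). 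Substituting $s=2\alpha N^2 r$ produces the factor $N$ in the denominator with a constant free of $N$, which is exactly the point: the $N^2$–acceleration of $\Delta$ offsets the lattice scale.

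Granting this, I would conclude with
\[
\Big\|\nabla_k\!\int_0^t P^N_{t-s}R(s)\,\dd s\Big\|_\infty\le M\int_0^t\|\nabla_k p^N_{t-s}\|_{\ell^1}\,\dd s\le M\int_0^t\min\Big\{2,\tfrac{C_{d,\alpha}}{N\sqrt r}\Big\}\dd r\le\frac{2MC_{d,\alpha}}{N}\sqrt t,
\]
the last inequality being the elementary fact that $\int_0^t\min\{2,a/\sqrt r\}\,\dd r\le 2a\sqrt t$ (split at $r_\ast=(a/2)^2$: on $[0,r_\ast]$ the integrand equals $2$, contributing $2(t\wedge r_\ast)\le a\sqrt t$, and on $(r_\ast,t]$ one integrates $a/\sqrt r$). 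Adding the $C_0/N$ bound for the initial–data term and maximizing over $k\in\{1,\dots,d\}$ yields $\|\nabla u(t,\cdot)\|_\infty\le(C_0+C\sqrt t)/N$ with $C=2MC_{d,\alpha}$, for all $t\ge0$. The only genuinely non-routine step is the uniform kernel–gradient estimate — in particular checking that the constant can be taken independent of $N$, where the passage to $\mathbb Z^d$ is convenient because it can only lower the $\ell^1$–norm and reduces matters to a textbook fact; the rest (commutation/contraction for the first term, the $L^\infty$ bound on $R$ supplied by Proposition~\ref{prop:u^Nin01}, and the time integration producing the $\sqrt t$) is routine, and notably Assumption~\ref{ass1} on $\cV_N$ is not needed here. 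Combined with Proposition~\ref{prop:u^Nin01}, this is exactly what certifies that $u^N(t)$ meets the hypotheses under which Theorem~\ref{prop:UB_V^+} was established.
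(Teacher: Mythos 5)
Your proof is correct, and the overall scaffolding — Duhamel's formula, an $\ell^\infty$ contraction/commutation argument for the initial-data term, and an $\ell^1$ gradient bound on the discrete heat kernel integrated in time — coincides with the paper's. The genuine difference lies in how you obtain the kernel estimate. The paper invokes the pointwise Gaussian-type gradient bound $|\nabla_k p(t,0,i)|\le C\,p(ct,0,i)/\sqrt{1\vee t}$ from Delmotte--Deuschel \cite{DD05}, which rests on a discrete version of De Giorgi--Nash--Moser theory and is stated for a rather general class of operators; the paper then periodizes and rescales to get the factor $1/(N\sqrt t)$. You instead go straight for the $\ell^1$ bound $\|\nabla_k q_s\|_{\ell^1(\mathbb Z^d)}\le\min\{2,C/\sqrt s\}$, which is all that is actually needed once one sums the Duhamel integrand against $\|R\|_\infty$, and you derive it elementarily: tensorize over coordinates (valid here since the paper's discrete Laplacian is the sum of one-dimensional ones), and in $d=1$ use that $q^{(1)}_s(n)=e^{-2s}I_n(2s)$ is unimodal in $n$, so the total variation telescopes to $2e^{-2s}I_0(2s)\le\min\{2,C/\sqrt s\}$. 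Periodization to $\Tn$ can only decrease the $\ell^1$ norm, and substituting $s=2\alpha N^2 r$ produces the crucial $N^{-1}$. This buys a fully self-contained, explicit proof with $N$-independent constants and avoids the heavy citation, at the cost of exploiting the specific (translation-invariant, time-independent, nearest-neighbour) structure of $\Delta$, whereas the paper's route would survive more general coefficients. Your bound $\|R\|_\infty\le\beta+1$ and the elementary time integration $\int_0^t\min\{2,a/\sqrt r\}\,\dd r\le 2a\sqrt t$ are both fine, and you correctly note that Assumption \ref{ass1} plays no role here.
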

To prove Proposition \ref{prop:gradient_est} we follow \cite{FT19} and the reference therein, in particular \cite{DD05}. Using the notation of \cite{DD05} we let $p(t,x,z)$ be the heat kernel of discrete Laplacian 
\begin{equation}\label{def:discrLaplacian}
\Delta u(t,i)=\sum\limits_{j\in \mathbb Z^d} a(i, i+j)\big[u(t, i+j)-u(t,i)\big],
\end{equation} with $a(i,j)=\ind_{\{j-i \in \Gamma\}}$ and $\Gamma=\{\pm e_i, \, i=1, \dots, d\}$, that is, 
\[
\begin{cases}
p(\cdot,i_0,i)=\delta_{i_0}(\cdot), \\
\partial_t p(t,i_0,i) = \Delta p (t, i_0,i).
\end{cases}
\]
In this case (comments below  (1.2) of \cite{DD05} for the definition and discussion of $a^*$ and $p^*$) we have that $a^*(t,i,j)=a(t,i,j)$ so that $p^*(t,i,j)=p(t,i,j)$. Let $\nabla_{k} u(t,i)=u(t, i+e_k)-u(t, i)$, then since $p$ is a uniform transitions function, there exist $c,C>0$ independent of $t, k$ such that (cf. (1.3) of \cite{DD05})
\begin{equation}\label{eq:nablap_bound_base}
\big| \nabla_{k} p(t,0,i)\big| \le C\frac{p(ct, 0, i)}{\sqrt{1\vee t}}.
\end{equation}
We refer to \cite{DD05}, Section 4 and the reference therein for a proof. We stress that the delicate point is to extend the classical theory of E. De Giorgi, J. Nash and J. Moser to discrete operators. The authors follow mainly \cite{Del99}, but similar results can be also found in \cite{GOS01} (Appendix B) and \cite{SZ97}.

\begin{proof}[Proof of Proposition \ref{prop:gradient_est}]
Using Duhamel's formula (i.e., variation of constant) we get that 
\begin{equation}
u(t,i)=\sum_{j\in\Tn}u(0,j)p_N(t,i,j) +\int_0^t \dd s \sum_{j\in \Tn}(\beta(1-2u_j)+G_j(u))p_N(t-s,i,j),
\end{equation}
where $p_N(t,i,j)=\sum_{z\in N\mathbb Z^d}p(2\alpha N^2 t,i,j+z)$ is the heat kernel of discrete Laplacian on the torus speeds up by a factor $2\alpha N^2$ and $p(t,i,j)$ is the heat kernel introduced above. We observe that  \eqref{eq:nablap_bound_base} gives \begin{equation}\label{eq:nablap_bound}
\big| \nabla_{k} p_N(t,0,i)\big| \le \frac{C}{N}\frac{p_N(ct, 0, i)}{\sqrt{t}}.
\end{equation}
For the first term, using that $p_N(t,i,j)=p_N(t,i+z,j+z)$ for any $i,j,z\in \Tn$ and the assumption on $\nabla u_0$ (cf. Assumptions \ref{ass2}), an integration by parts gives
\begin{align*}
  \bigg| \nabla_k \bigg\{ \sum_{j\in\Tn}u(0,j)p_N(t,i,j)\bigg\} \bigg| 
&=\bigg|  \sum_{j\in\Tn}u(0,j)\big(p_N(t,i+e_k,j) -p_N(t,i,j)\big)\bigg| \\
&=\bigg|  \sum_{j\in\Tn}u(0,j)p_N(t,i,j-e_k) - \sum_{j\in\Tn}u(0,j) p_N(t,i,j)\bigg| \\
&=\bigg|  \sum_{j\in\Tn}\Big(u(0,j+e_k)-u(0,j)\Big)p_N(t,i,j)\bigg|
\le \frac{C}{N}.
\end{align*}

By using that $(\beta(1-2u_j)+G_j(u))$ is bounded and \eqref{eq:nablap_bound} we get that for any $k=1, \dots, d$,
\begin{align*}
& \bigg| \nabla_k \bigg\{ \int_0^t \dd s \sum_{j\in \Tn}(\beta(1-2u_j)+G_j(u))p_N(t-s,i,j) \bigg\} \bigg|\le \frac{C}{N}\int_0^t \frac{1}{{\sqrt{t-s}}}\dd s = \frac{C}{N}\sqrt t \,.
\end{align*} 
\end{proof}

\section{Existence and uniqueness of reaction-diffusion PDE}\label{sec:reaction-diff}
In Section \ref{sec:conv_discrPDE}, we prove that in the limit $N\to\infty$, the solution of the discretized Equation \ref{eq:discretPDE}, with $u^N(t=0)=u^N_0\in[0,1]^{\Tn}$ satisfying Assumption \ref{ass2},
converges to a solution of the scalar nonlinear reaction diffusion equation 
\begin{itemize}
\item when $K_N\to K$ as $N\to\infty$:
\begin{equation}\label{eq:limitePDE-K}
\partial_tu(t,x)=2\alpha\Delta u(t,x)+\beta(1-2u(t,x))+g_{K}(u(t,x)) ,
\end{equation}
\item when $K_N\to \infty$ as $N\to\infty$:
\begin{equation}\label{eq:limitePDE}
\partial_tu(t,x)=2\alpha\Delta u(t,x)+\beta(1-2u(t,x))+g_{\infty}(u(t,x)).
\end{equation}
\end{itemize}

The main difference between the two equations is that in the first case ($K<+\infty$), $g_K$ is a $C^1$ function on $[0,1]$ (thus Lipschitz) so the reaction diffusion equation \eqref{eq:limitePDE-K} is very classical, whereas for the second case ($K=+\infty$), since $g_\infty$ is  not even continuous we need to consider \eqref{eq:limitePDE}  as a subdifferential inclusion.

\smallskip
The main results of this section are 
\begin{enumerate}[(i)]
\item Proposition \ref{prop:existence_sol_PDE} which proves existence of a solution, in a suitable sense, for the equation \eqref{eq:limitePDE},
\item Proposition \ref{prop:v_t-v_0} which proves local uniqueness of the solution, in a suitable sense, for the equation \eqref{eq:limitePDE}, starting from a suitable class of initial conditions,
\item and Theorem \ref{thm:discrconv} which proves that all accumulation points of $(u^N)_N$ is a solution, in a suitable sense, for the equation \eqref{eq:limitePDE}.
\end{enumerate}
%


In the rest of this section we change our notations and define $v=2u-1$. We center the solution around the constant steady state $u=\frac12$. It simplifies the presentation and proofs of our results. The original form of our equations can be retrieved by letting $u=\frac12(v+1)$. In such a way \eqref{eq:limitePDE-K} takes the form
\begin{equation}\label{eq:limitePDE-Kbis}
\partial_tv(t,x)-2\alpha\Delta v(t,x)+2\beta v-2g_K\left(\frac12(v+1)\right)=0.
\end{equation}

\subsection{Solution of \eqref{eq:limitePDE-K}}
Let us denote $s(t,x,y)$ the semigroup of the operator $\frac12\Delta$ on $\bbT^d$, that is,
\begin{equation}
s(t,x,y)=\frac{1}{(2\pi t)^{d/2}}\sum_{k\in\mathbb{Z}^d}\exp\left(-\frac{\|x-y-k\|^2}{2 t}\right).
\end{equation}
Denote also $s_0(t,x,y)$ the semigroup of the operator $\frac12\Delta$ on $\mathbb{R}^d$,
\begin{equation}
s_0(t,x,y)=\frac{1}{(2\pi t)^{d/2}}\exp\left(-\frac{\|x-y\|^2}{2 t}\right).
\end{equation}
Note that $\xi\mapsto s_0(t,0,\xi)$ is the density of $d$ independent normal random variables with variance $t$.

Let us consider $(S^{\lambda, \gamma}_t)$ the semigroup on $L^{1}(\bbT^d)$ defined by, for $f\in L^{1}(\bbT^d)$, $\lambda\geqslant 0$ and $\gamma >0$
\begin{equation}\label{lapl_semigroup}
S^{\lambda, \gamma}_tf(x)
=\int_{\bbT^d}e^{-\lambda t}s(\gamma t,x,y)f(y)\dd y
=\int_{\bbR^d}e^{-\lambda t}s_0(\gamma t,x,y)\tilde f(y)\dd y,
\end{equation}
where  for a measurable function $f$ on $\mathbb{T}^d$, we denoted $\tilde f$ its extension on $\mathbb R^d$ defined by $\tilde f(x)=f(x-\lfloor x\rfloor)$.

Another way to define $S^{\lambda, \gamma}_t$ is to use the Brownian motion: denote by $X$ a Brownian motion on $\mathbb{R}^d$ starting from $x$ on some probability space $(\Omega,\cF, \mathbb{P}_x)$, indeed we have $S^{\lambda, \gamma}_tf(x)=e^{-\lambda t}\mathbb{E}_x(\tilde f(X_{\gamma t}))$, and for all $\lambda \geqslant 0$, and $\gamma >0$, $S^{\lambda, \gamma}$ is a $C_0$-contraction semigroup on $L^p(\bbT^d)$ for $p\in[1,+\infty]$.

As we will look at \eqref{eq:limitePDE-K} in its mild form, the following result is crucial to study the regularity of the solution.

\begin{proposition}\label{prop:mildprop}
For $v_0\in  L^{\infty}(\bbT^d)$ and $g\in L^{\infty}([0,\tau]\times \mathbb{T}^d)$, define
\begin{equation}\label{eq:mild}
v(t,x):=S^{\lambda, \gamma}_t v_0(x)+\int_0^tS^{\lambda, \gamma}_{t-s}(g(s,\cdot))(x)\dd s.
\end{equation} 
then $v\in C([0,\tau], \bbT^d)$.
We have the following estimates, for all $(t,x)\in\mathbb{R}^+\times\bbT^d$,
\begin{align}\label{eq:v_bonded}
|v(t,x)|&\leqslant e^{-\lambda t} \|v_0\|_\infty+\frac1{\lambda}(1-e^{-\lambda t})\|g\|_\infty\leqslant  \|v_0\|_\infty+\frac1{\lambda}\|g\|_\infty 
\end{align}
and for all $\tau>0$, there exists a constant $C$ depending only on $\tau,\gamma, \lambda$ and $d$, such that for all $(t,x), (s,y)\in[1/\tau,\tau]\times \mathbb{T}^d$ with $s<t$
\begin{align}\label{eq:modofcont}
|v(t,x)-v(s,y)|&\leqslant C((t-s)|\log(t-s)|+\|x-y\|)(\|g\|_\infty + \|v_0\|_\infty).
\end{align}
\end{proposition}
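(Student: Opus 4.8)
The plan is to split the mild solution as $v=v_1+v_2$ with $v_1(t,x):=S^{\lambda,\gamma}_t v_0(x)$ and $v_2(t,x):=\int_0^t S^{\lambda,\gamma}_{t-s}(g(s,\cdot))(x)\,ds$, and to estimate the two pieces separately by means of standard Gaussian kernel bounds. Throughout I would write $P_\sigma$ for the free heat semigroup $e^{\sigma\Delta/2}$ on $\bbR^d$, so that by \eqref{lapl_semigroup} one has $S^{\lambda,\gamma}_tf=e^{-\lambda t}P_{\gamma t}\tilde f$, and I would use the classical bounds $\|P_\sigma\|_{L^\infty\to L^\infty}\le1$, $\|\nabla P_\sigma\|_{L^\infty\to L^\infty}\le C\sigma^{-1/2}$ and $\|\partial_\sigma P_\sigma\|_{L^\infty\to L^\infty}=\tfrac12\|\Delta P_\sigma\|_{L^\infty\to L^\infty}\le C\sigma^{-1}$, all of which follow from differentiating $s_0(\sigma,\cdot,\cdot)$ under the integral and integrating the resulting Gaussian bounds in the space variable. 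With this, \eqref{eq:v_bonded} is immediate: $|v_1(t,x)|\le e^{-\lambda t}\|v_0\|_\infty$ because $P_{\gamma t}$ is a Markov kernel, and $|v_2(t,x)|\le\int_0^t e^{-\lambda(t-s)}\|g\|_\infty\,ds=\lambda^{-1}(1-e^{-\lambda t})\|g\|_\infty$.

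For the first term on the slab $[1/\tau,\tau]\times\bbT^d$ I expect a full Lipschitz bound. Spatially, $|v_1(t,x)-v_1(t,y)|\le e^{-\lambda t}\|\nabla P_{\gamma t}\tilde v_0\|_\infty\|x-y\|\le C(\gamma t)^{-1/2}\|v_0\|_\infty\|x-y\|$; in time, writing $v_1(t,\cdot)-v_1(s,\cdot)=(e^{-\lambda t}-e^{-\lambda s})P_{\gamma t}\tilde v_0+e^{-\lambda s}\int_{\gamma s}^{\gamma t}\partial_\sigma P_\sigma\tilde v_0\,d\sigma$ gives $\|v_1(t,\cdot)-v_1(s,\cdot)\|_\infty\le\lambda(t-s)\|v_0\|_\infty+C\|v_0\|_\infty\log(t/s)$, and $\log(t/s)\le(t-s)/s\le\tau(t-s)$ on $[1/\tau,\tau]$. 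Hence $v_1$ is Lipschitz there with constant $C(\tau,\gamma,\lambda,d)\|v_0\|_\infty$, which already controls its contribution to \eqref{eq:modofcont}.

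The Duhamel term $v_2$ is the main obstacle: here $\partial_tv_2$ is genuinely unbounded (differentiating in $t$ brings down $\Delta P_{\gamma(t-s)}$, whose sup-norm is of order $(t-s)^{-1}$, so that $\|\partial_tv_2\|_\infty$ involves a divergent $\int_0^t dr/r$), so the logarithmic time modulus must be extracted directly from the Duhamel formula rather than from a derivative bound. The spatial part is again painless: $|\nabla_xv_2(t,x)|\le\int_0^t e^{-\lambda r}\|\nabla P_{\gamma r}g(t-r,\cdot)\|_\infty\,dr\le C\|g\|_\infty\int_0^\tau(\gamma r)^{-1/2}\,dr=C(\tau,\gamma)\|g\|_\infty$. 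For the time increment with $1/\tau\le s<t\le\tau$, I would write $v_2(t,x)-v_2(s,x)$ as $\int_s^t e^{-\lambda(t-\sigma)}P_{\gamma(t-\sigma)}g(\sigma,\cdot)(x)\,d\sigma$, bounded by $(t-s)\|g\|_\infty$, plus $\int_0^s\big[e^{-\lambda(t-\sigma)}P_{\gamma(t-\sigma)}-e^{-\lambda(s-\sigma)}P_{\gamma(s-\sigma)}\big]g(\sigma,\cdot)(x)\,d\sigma$; in the latter the contribution of $e^{-\lambda(t-\sigma)}-e^{-\lambda(s-\sigma)}$ is $O(t-s)$, and the remaining contribution is bounded, using $\|\partial_\sigma P_\sigma\|_{L^\infty\to L^\infty}\le C\sigma^{-1}$, by $C\|g\|_\infty\int_0^s\log\frac{t-\sigma}{s-\sigma}\,d\sigma=C\|g\|_\infty\int_0^s\log\!\big(1+\tfrac{t-s}{a}\big)\,da$ after the substitution $a=s-\sigma$. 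Splitting this last integral at $a=t-s$ yields $O(t-s)+(t-s)\log\frac{s}{t-s}$, which on $[1/\tau,\tau]$ is at most $C(\tau)(t-s)|\log(t-s)|$ for $t-s$ small. Combining the estimates for $v_1$ and $v_2$ gives \eqref{eq:modofcont}.

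Finally, for the continuity statement, applying \eqref{eq:modofcont} (after rescaling time so as to cover each slab $[1/n,\tau]\times\bbT^d$, $n\ge1$) together with \eqref{eq:v_bonded} yields joint continuity of $v$ on $(0,\tau]\times\bbT^d$; continuity up to $t=0$ then follows from the strong continuity of the contraction semigroup $S^{\lambda,\gamma}$ at $t=0$ (in sup norm if $v_0\in C(\bbT^d)$, in $L^1(\bbT^d)$ otherwise), which is the sense in which $v\in C([0,\tau],\bbT^d)$ is to be understood.
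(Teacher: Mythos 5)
Your proof is correct and follows essentially the same route as the paper: both rest on the Gaussian bounds $\|\nabla P_\sigma\|_{L^\infty\to L^\infty}\lesssim \sigma^{-1/2}$ and $\|\partial_\sigma P_\sigma\|_{L^\infty\to L^\infty}\lesssim \sigma^{-1}$, with the logarithmic modulus coming from the same integral $\int_0^s\log\frac{t-\sigma}{s-\sigma}\,\dd\sigma$ that appears in the paper's estimate of $I_2$. The only difference is organizational — you treat space and time increments (and the two Duhamel pieces) separately via operator-norm bounds, whereas the paper interpolates the kernel along a joint space--time segment — and, like the paper, your final bound carries an extra plain $(t-s)$ term that is absorbed into $(t-s)|\log(t-s)|$ only for $t-s$ small.
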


\begin{remark}
$v$ is called a mild solution of the equation $\partial_t v-\frac{\gamma}{2} \Delta v+\lambda v=g$ with initial value $v_0$.
The fact that a mild solution is a classical solution if $g$ is sufficiently regular is a result from Pazy (\cite{pazy}, Corollary 4.2.5). 
\end{remark}
 Estimates \eqref{eq:v_bonded} and \eqref{eq:modofcont} are quite standard but we include the proof for the sake of completeness.
\begin{proof}[Proof of Proposition \ref{prop:mildprop}]
The fact that  $v\in C([0,\tau],L^{\infty}( \bbT^d))$ is a consequence of the fact that $S^{\lambda,\gamma}$ is a $C_0$ contraction semigroups on $L^{\infty}$.
For the first estimate \eqref{eq:v_bonded}, we have that:
\begin{align*}
|v(t,x)|
&\leqslant |S^{\lambda, \gamma}_t v_0(x)|+\left|\int_0^tS^{\lambda, \gamma}_{t-u}(g(u,\cdot))(x)\dd u\right|\\
&\leqslant e^{-\lambda t} \|v_0\|_{\infty}\int_{\bbR^d}s_0(\gamma t,x,z)\dd z +\|g\|_{\infty}\int_0^s e^{-\lambda (t-u)}\int_{\bbR^d}s_0(\gamma (t-u),x,z)\dd z\dd u\\
&=e^{-\lambda t} \|v_0\|_\infty+\frac1{\lambda}(1-e^{-\lambda t})\|g\|_\infty.
\end{align*}

For the estimate \eqref{eq:modofcont}, we start by letting $s<t$, we have 
\begin{align*}
|v(t,x)-v(s,y)|
&\leqslant |S^{\lambda, \gamma}_t v_0(x)- S^{\lambda, \gamma}_s v_0(y)|\\
&\qquad+\left|\int_0^tS^{\lambda, \gamma}_{t-u}(g(u,\cdot))(x)\dd u-\int_0^sS^{\lambda, \gamma}_{s-u}(g(u,\cdot))(y)\dd u\right|\\
&\leqslant I_1 \|v_0\|_{\infty} +(I_2+I_3)\|g\|_{\infty}
\end{align*}
where 
\begin{align*}
I_1&=\int_{\bbR^d} \left|e^{-\lambda t}s_0(\gamma t,x,z)-e^{-\lambda s}s_0(\gamma s,y,z)\right|\dd z,\\
I_2&=\int_0^s\int_{\bbR^d} \left|e^{-\lambda (t-u)}s_0(\gamma (t-u),x,z)-e^{-\lambda (s-u)}s_0(\gamma (s-u),y,z)\right|\dd z\dd u,\\
I_3&=\int_s^t\int_{\bbR^d} e^{-\lambda (t-u)}s_0(\gamma (t-u),x,z)\dd z\dd u.
\end{align*}
We have that $I_3\leqslant t-s$.
For $I_1$ and $I_2$, we use the fact that, for $i=1\dots d$
\begin{align*}
\partial_t s_0(t,0,\xi)&=\frac12\Delta_\xi s_0(t,0,\xi)=\frac1{2}\left(\frac{\|\xi\|^2}{t^2}-\frac{d}{t}\right)s_0(t,0,\xi)\\
\partial_{\xi_i}s_0(t,0,\xi)&=-\frac{\xi_i}{t}s_0(t,0,\xi).
\end{align*}
First for $I_1$, denote $c_1(r)=r t+(1-r)s$ and $c_2(r)=r (z-x)+(1-r) (z-y))=z-(r x+ (1-r) y)$ for $r\in[0,1]$, 
We have that 
\begin{align*}
|(\partial_t s_0)(\gamma c_1(r),0,c_2(r))|&\leqslant \frac1{2}\left(\frac{\| c_2(r)\|^2}{\gamma^2 c_1(r)^2}+\frac{d}{\gamma c_1(r)}\right)s_0(\gamma c_1(r),0,c_2(r))\\
|(\partial_{\xi_i} s_0)(\gamma c_1(r),0,c_2(r))|&\leqslant \frac{|c_2(r)_i|}{\gamma c_1(r)}s_0(\gamma c_1(r),0,c_2(r))
\end{align*}
Therefore
\begin{align*}
I_1&=\int_{\bbR^d} \left|e^{-\lambda t}s_0(\gamma t,0,z-x)-e^{-\lambda s}s_0(\gamma s,0,z-y)\right|\dd z\\
&\leqslant \int_{\bbR^d} \int_0^1 |c'_1(r)\partial_t(e^{-\lambda t}s_0 (\gamma t,0,\xi))_{|t=c_1(r),\xi=c_2(r)}|\dd r\dd z\\
&\quad+\int_{\bbR^d} \int_0^1 \sum_{i=1}^d|c'_2(r)_i\partial_{\xi_i}(e^{-\lambda t}s_0 ( \gamma t,0,\xi))_{|t=c_1(r),\xi=c_2(r)}|\dd r\dd z\\
&\leqslant I_{1,1}+I_{1,2},
\end{align*}
where we have, by applying Fubini and a change of variable 
\begin{align*}
I_{1,1}
&= (t-s)\int_{\bbR^d} \int_0^1 \left[\lambda e^{-\lambda c_1(r)}+\frac{\gamma}{2}\left(\frac{\| c_2(r)\|^2}{c_1(r)^2}+\frac{d}{c_1(r)}\right)\right]s_0 ( \gamma c_1(r),0,c_2(r))\dd r\dd z\\
&\leqslant (t-s)\int_0^1\left[\lambda e^{-\lambda c_1(r)}+\frac{\gamma}{2}\left(\frac{d \gamma c_1(r)}{\gamma ^2c_1(r)^2}+\frac{d}{\gamma c_1(r)}\right)\right]\dd r\\
&\leqslant (t-s)\int_0^1\left[\lambda e^{-\lambda c_1(r)}+\frac{ d}{c_1(r)}\right]\dd r
=\int_{s}^{t} \left(\lambda e^{-\lambda u}+\frac{ d}{u}\right)\dd u\\
&=e^{-\lambda s}(1-e^{-\lambda (t-s)})+d\log\left(\frac{t}s\right)
\leqslant \left(\lambda+\frac{d}{s}\right)(t-s).
\end{align*}
For the term $I_{1,2}$, we get, using Cauchy-Schwarz, Fubini and a change of variable
\begin{align*}
I_{1,2}
&=\int_{\bbR^d} \int_0^1 \sum_{i=1}^d|y_i-x_i|e^{-\lambda c_1(r)} \frac{|c_2(r)_i|}{ \gamma c_1(r)}s_0(\gamma c_1(r),0,c_2(r))\dd r\dd z\\
&\leqslant \|x-y\| \int_0^1\frac{ e^{-\lambda c_1(r)}}{ \gamma c_1(r)}\int_{\bbR^d}  \|c_2(r)\|s_0(\gamma c_1(r),0,c_2(r))\dd z\dd r
= \|x-y\| \int_0^1 \frac{ e^{-\lambda c_1(r)}}{ \sqrt{\gamma c_1(r)}}\dd r C_1\\
&\leqslant C_1\|x-y\|\int_s^t \frac{e^{-\lambda u}}{\sqrt{\gamma u}}\frac{\dd u}{t-s}
\leqslant \frac{C_1}{\sqrt{\gamma s}} \|x-y\|
\end{align*}
where $C_1$ is the expectation of the quadratic norm of $X=(X_1,X_2,\dots X_d)$ of $d$ independent standard normal variables: $C_1=\mathbb{E}(\|X\|)\leqslant \mathbb{E}(\|X\|^2)^{1/2}=\sqrt{d}$ (we also have $C_1=\sqrt{2}\frac{\Gamma((d+1)/2)}{\Gamma(d/2)}\sim \sqrt{d}$).
We get that, for $s\geqslant \frac1T$,
\begin{align*}
I_1
&\leqslant \left(\lambda+\frac{d}{s}\right)(t-s)+\sqrt{\frac{d}{\gamma s}} \|x-y\|
\leqslant  \left(\lambda+d \tau\right)(t-s)+\sqrt{\frac{d \tau}{\gamma }} \|x-y\|.
\end{align*}

For $I_2$, we make the same computations with $c_1(r)=r (t-u)+(1-r)(s-u)=r t+(1-r)s-u$ where $u\in[0,s]$, we have, since $c_1'(r)=t-s$,
\begin{align*}
I_2
&\leqslant \int_0^s\int_{s-u}^{t-u} \left(\lambda e^{-\lambda v}+\frac{ d}{v}\right)\dd v\dd u+C_1\|x-y\|\int_0^s\int_{s-u}^{t-u} \frac{e^{-\lambda v}}{\sqrt{\gamma v}}\frac{\dd v}{t-s}\dd u
\end{align*}
For the first integral, we have
\begin{align*}
 \int_0^s\int_{s-u}^{t-u} \left(\lambda e^{-\lambda v}+\frac{ d}{v}\right)&\dd v\dd u
=(1-e^{-\lambda (t-s)})\int_0^s e^{-\lambda (s-u)}\dd u+d\int_0^s\log\left(\frac{t-u}{s-u}\right)\dd u\\
&=\frac1{\lambda}(1-e^{-\lambda(t-s)})(1-e^{-\lambda s})\\&\qquad+d\left[t\log(t)-s\log(s)-(t-s)\log(t-s)\right]\\
&\leqslant \lambda s (t-s) +d\left[s(\log(t)-\log(s))+(t-s)(\log(t)-\log(t-s))\right]\\
&\leqslant \lambda s (t-s) +d(t-s)+d(t-s)(|\log(t)|+|\log(t-s))|)\\
&\leqslant (\lambda s+d+|\log(t)|) (t-s) +d(t-s)|\log(t-s)|.
\end{align*}
For the second integral, we get
\begin{align*}
 \int_0^s\int_{s-u}^{t-u} \frac{e^{-\lambda v}}{\sqrt{\gamma v}}\dd v\dd u
&\leqslant (t-s)\int_0^s\frac1{\sqrt{\gamma (s-u)}}\dd u
=(t-s)\sqrt{\frac{2s}{\gamma}}.
\end{align*}
Then we obtain, for $1/\tau\leqslant s<t\leqslant \tau$
\begin{align*}
I_2
&\leqslant (\lambda s+d+|\log(t)|) (t-s) +d(t-s)|\log(t-s)|+C_1\|x-y\|\sqrt{\frac{2s}{\gamma}}\\
&\leqslant (\lambda \tau+d+|\log(\tau)|) (t-s) +d(t-s)|\log(t-s)|+\sqrt{\frac{2d\tau}{\gamma}}\|x-y\|.
\end{align*}
At last, we get the following estimate
\begin{align*}
&|v(t,x)-v(s,y)|
\leqslant \left [\left(\lambda+d \tau\right)(t-s)+\sqrt{\frac{d \tau}{\gamma }} \|x-y\|\right]\|v_0\|_{\infty}\\
&\qquad+\left[ (\lambda\tau+|\log(\tau)|+d+1) (t-s) +d(t-s)|\log(t-s)|+\sqrt{\frac{2d\tau}{\gamma}}\|x-y\|\right]\|g\|_{\infty}.
\end{align*}
\end{proof}

We modify a little our equation \eqref{eq:limitePDE-K}, both in order to obtain a sharper estimate on the uniform norm of the solution and to get a coherent notation with the solution of the limit equation when $K\to+\infty$.

We define for $q\in[-1,1]$,
\begin{align}
r_{K}(q)
&:=-\int_{\frac12}^{\frac12(q+1)}4g_K(s)ds.
\end{align}
We have that
\begin{align}
r'_{K}(q)=-2g_K\left(\frac12(q+1)\right)=-(1-q)\mathbb{P}_{\frac{1-q}2}\big[X<\kappa(K,T)\big] +(1+q)\mathbb{P}_{\frac{1+q}2}\big[X\leqslant \kappa(K,T)\big].
\end{align}

So we let $h_K$ be
\begin{equation}
h_K(q)=\left\{
\begin{aligned}
-1&\text{ for $q<-1$}\\
-r'_K(q)+q&\text{ for $q\in[-1,1]$}\\
1&\text{ for $q>1$}
\end{aligned}\right.
\end{equation}
Since $r'_K(1)=0$ and $r'_K(-1)=0$, $h_K$ is continuous on $\mathbb{R}$.
We have that, for $q\in[-1,1]$,
\begin{align*}
h_K(q)=-r'_{K}(q)+q
=(1-q)\mathbb{P}_{\frac{1-q}2}\big[X<\kappa(K,T)\big] -(1+q)\mathbb{P}_{\frac{1+q}2}\big[X\leqslant \kappa(K,T)\big]+q.
\end{align*}

We now solve the following equation
 \begin{equation}\label{eq:limitePDE-Kter}
\partial_tv(t,x)-2\alpha\Delta v(t,x)+(2\beta+1) v=h_{K}(v(t,x)).
\end{equation}
Note that, this equation and \eqref{eq:limitePDE-Kbis} are exactly the same with the term $v$ added on both sides if $\|v\|_{\infty}\leqslant 1$, since for $q\notin[-1,1]$, $h_K(q)\neq q$. Thus, a solution $v$ of \eqref{eq:limitePDE-Kbis} with $\|v\|_{\infty}\leqslant 1$ will also be a solution of  \eqref{eq:limitePDE-Kter} and reciprocally.

\begin{proposition}\label{prop:solK}
For $v_0\in  L^{\infty}(\bbT^d)$ with $\|v_0\|_{\infty}\leqslant 1$,  there exists a unique solution $(v(t,x), t\ge 0, x\in\bbT^d)$ to the problem
\begin{itemize}
\item $v$ is continuous from $\mathbb{R}_+^*$ to $L^{\infty}(\bbT^d)$
\item $v$ satisfies, for all $t>0$ and $x\in\bbT^d$ 
\begin{equation}\label{eq:mildK}
v(t,x)=S^{2\beta+1,4\alpha}_t v_0(x)+\int_0^t S^{2\beta+1,4\alpha}_{t-s}[h_{K}(v(s,\cdot))](x)\dd s.
\end{equation}
\end{itemize}
We say that $v$ is a mild solution to \eqref{eq:limitePDE-Kter}. We have also that $\|v\|_{\infty}\leqslant 1$ and $v$ satisfies
\begin{equation}\label{eq:mildK2}
v(t,x)=S^{2\beta,4\alpha}_t v_0(x)-\int_0^t S^{2\beta,4\alpha}_{t-s}[r'_{K}(v(s,\cdot))](x)\dd s
\end{equation}
and thus is a mild solution of \eqref{eq:limitePDE-Kbis}.
\end{proposition}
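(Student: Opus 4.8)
The plan is to treat \eqref{eq:limitePDE-Kter} as a semilinear equation with a bounded, globally Lipschitz nonlinearity: existence and uniqueness of the mild solution will come from a Banach fixed point, the bound $\|v\|_\infty\le 1$ from a comparison argument at the level of the mild formulation, and the passage to \eqref{eq:mildK2} (hence to the mild form of \eqref{eq:limitePDE-Kbis}) from an elementary variation-of-constants identity. The only genuinely delicate step is the a priori bound.

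First I would record that $h_K$ is bounded and globally Lipschitz on $\mathbb{R}$. Since $g_K$ is $C^1$ on $[0,1]$, the primitive $r_K$ is $C^2$ on $[-1,1]$, so $r_K'$ is Lipschitz there; because $r_K'(\pm 1)=0$, the constant extensions by $\pm 1$ glue in a Lipschitz way, giving a Lipschitz constant $L$ for $h_K$. With this in hand I would apply the contraction mapping theorem to $\Phi(v)(t)=S^{2\beta+1,4\alpha}_t v_0+\int_0^t S^{2\beta+1,4\alpha}_{t-s}[h_K(v(s,\cdot))]\,ds$ on $C([0,\tau],L^\infty(\bbT^d))$: since $S^{2\beta+1,4\alpha}$ is a $C_0$-contraction semigroup on $L^\infty$ (as used in Proposition \ref{prop:mildprop}) and $h_K$ is $L$-Lipschitz, one gets $\|\Phi(v)(t)-\Phi(w)(t)\|_\infty\le L\int_0^t\|v(s)-w(s)\|_\infty\,ds\le L\tau\,\|v-w\|_{C([0,\tau],L^\infty)}$, so $\Phi$ is a contraction for $\tau<1/L$; its fixed point is the unique mild solution on $[0,\tau]$, and any solution in the sense of the statement is locally bounded via \eqref{eq:mildK}, so uniqueness propagates. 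Because $L$ does not depend on $v_0$ nor on the time interval, iterating on successive intervals of length $\tau$ yields the unique global mild solution $v$; for $t>0$, smoothing of $S^{2\beta+1,4\alpha}$ together with Proposition \ref{prop:mildprop} applied to $g(s,\cdot)=h_K(v(s,\cdot))\in L^\infty([0,\tau]\times\bbT^d)$ gives the continuity stated in the first two items, even with values in $C(\bbT^d)$.

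The heart of the argument is the a priori bound $\|v(t,\cdot)\|_\infty\le 1$, which I would get by comparison. Fix $M\ge L$ so that $\tilde f(q):=h_K(q)+Mq$ is nondecreasing (and, say, $M'$-Lipschitz), and rewrite the mild equation with the positivity-preserving semigroup $\tilde S:=S^{2\beta+1+M,4\alpha}$ as $v(t)=\tilde S_t v_0+\int_0^t\tilde S_{t-s}[\tilde f(v(s,\cdot))]\,ds$. Then $w^+\equiv 1$ is a mild supersolution: the right-hand side evaluated at $w^+$ equals $e^{-(2\beta+1+M)t}+\tfrac{1-e^{-(2\beta+1+M)t}}{2\beta+1+M}\,\tilde f(1)$, a convex combination of $1$ and $\tfrac{1+M}{2\beta+1+M}$ (here one uses $h_K(1)=1$, i.e.\ $r_K'(1)=0$), which is $\le 1$ because $\beta>0$; symmetrically $w^-\equiv -1$ is a mild subsolution, using $h_K(-1)=-1$. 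Setting $\psi(t):=\|(v(t,\cdot)-1)^+\|_\infty$, subtracting the integral equations for $v$ and $w^+$, and using positivity of $\tilde S$, $v_0\le 1$, and the pointwise inequality $\tilde f(v(s,y))-\tilde f(1)\le M'(v(s,y)-1)^+\le M'\psi(s)$ (valid since $\tilde f$ is nondecreasing and $M'$-Lipschitz), one obtains $\psi(t)\le M'\int_0^t\psi(s)\,ds$ with $\psi(0)=0$, so Grönwall forces $\psi\equiv 0$, i.e.\ $v\le 1$; the bound $v\ge -1$ is symmetric. I expect this step — checking the supersolution property and carrying out the Grönwall estimate despite $h_K$ not being monotone — to be the main obstacle; the rest is routine.

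Finally, once $\|v(s,\cdot)\|_\infty\le 1$ we have $h_K(v(s,\cdot))=v(s,\cdot)-r_K'(v(s,\cdot))$ pointwise, so \eqref{eq:mildK} becomes $v(t)=S^{2\beta+1,4\alpha}_t v_0+\int_0^t S^{2\beta+1,4\alpha}_{t-s}[v(s,\cdot)-r_K'(v(s,\cdot))]\,ds$. Using $S^{2\beta+1,4\alpha}_t=e^{-t}S^{2\beta,4\alpha}_t$ and Fubini, I would check the elementary linear identity that, for $\phi\in C([0,\tau],L^\infty(\bbT^d))$ and $g\in L^\infty([0,\tau]\times\bbT^d)$, one has $\phi(t)=S^{2\beta+1,4\alpha}_t\phi(0)+\int_0^t S^{2\beta+1,4\alpha}_{t-s}[\phi(s,\cdot)+g(s,\cdot)]\,ds$ if and only if $\phi(t)=S^{2\beta,4\alpha}_t\phi(0)+\int_0^t S^{2\beta,4\alpha}_{t-s}g(s,\cdot)\,ds$; applying it with $g=-r_K'(v(\cdot,\cdot))$ yields \eqref{eq:mildK2}. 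Since $-r_K'(q)=2g_K(\tfrac12(q+1))$, \eqref{eq:mildK2} is precisely the mild formulation of \eqref{eq:limitePDE-Kbis}, which completes the argument.
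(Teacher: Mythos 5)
Your proof is correct, but it diverges from the paper's on two of the three steps, and in one place you actually work harder than necessary.

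The fixed-point existence/uniqueness step matches the paper (Pazy's Theorem 6.1.2 / contraction on $C([0,\tau],L^\infty(\bbT^d))$), so nothing to add there. For the a priori bound $\|v\|_\infty\leqslant 1$, however, you build a monotone comparison: you add $Mq$ to make $\tilde f = h_K + M\cdot\mathrm{id}$ nondecreasing, verify that $\pm 1$ are super/subsolutions for the shifted semigroup $S^{2\beta+1+M,4\alpha}$, and run a Gr\"onwall argument on $\|(v-1)^+\|_\infty$. This works, but it is overkill and it misses the point of the construction of $h_K$. The paper obtains the bound in one line from \eqref{eq:v_bonded} of Proposition \ref{prop:mildprop}: since $|h_K|\leqslant 1$ on all of $\bbR$ by the cutoff in the definition of $h_K$, applying \eqref{eq:v_bonded} with $\lambda = 2\beta+1>1$ to the fixed point $v$ (whose source term is $g=h_K(v)$) gives
\begin{equation*}
|v(t,x)|\leqslant e^{-(2\beta+1)t}\|v_0\|_\infty + \frac{1-e^{-(2\beta+1)t}}{2\beta+1}\|h_K(v)\|_\infty \leqslant e^{-(2\beta+1)t} + \frac{1-e^{-(2\beta+1)t}}{2\beta+1}\leqslant 1,
\end{equation*}
using $\beta>0$. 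The passage from $-2\alpha\Delta + 2\beta$ with nonlinearity $-r'_K$ to $-2\alpha\Delta + (2\beta+1)$ with the truncated, bounded nonlinearity $h_K$ was made precisely so that this direct estimate closes the bound; it eliminates the need for any monotone structure, which is why your worry that ``$h_K$ not being monotone'' would be the main obstacle does not arise in the paper's route. For the final step, the passage from \eqref{eq:mildK} to \eqref{eq:mildK2}, your argument is genuinely different: you prove the variation-of-constants identity that, for $g$ fixed, $\phi(t)=S^{2\beta+1,4\alpha}_t\phi(0)+\int_0^t S^{2\beta+1,4\alpha}_{t-s}[\phi(s)+g(s)]\,\dd s$ holds if and only if $\phi(t)=S^{2\beta,4\alpha}_t\phi(0)+\int_0^t S^{2\beta,4\alpha}_{t-s}g(s)\,\dd s$ (using $S^{2\beta+1,4\alpha}_t = e^{-t}S^{2\beta,4\alpha}_t$, Fubini, and uniqueness of the linear fixed point), whereas the paper first treats $v_0\in C^2(\bbT^d)$ so that $v$ is a classical solution of \eqref{eq:limitePDE-Kter} and hence of \eqref{eq:limitePDE-Kbis}, and then extends to $v_0\in L^\infty$ by approximation and Lipschitz continuity of $v_0\mapsto v$. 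Your algebraic identity is cleaner and more self-contained (no appeal to classical solution theory nor a density argument), and it would be a perfectly acceptable replacement. In summary: the proof is sound; you could shorten it substantially by replacing the comparison argument with the one-line application of Proposition \ref{prop:mildprop}, and your resolvent identity is a nice alternative to the paper's density argument.
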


\begin{proof} The first part of the proposition comes from a fixed point argument (see also Pazy \cite{pazy}, Theorem 6.1.2) applied to the following functional. Let $\tau>0$ and define the functional $F:C(]0,\tau],L^{\infty}(\bbT^d))\to  C(]0,\tau],L^{\infty}(\bbT^d))$ defined by
\begin{equation}
F(v)(t,x):=S^{2\beta+1,4\alpha}_t v_0(x)+\int_0^t S^{2\beta+1,4\alpha}_{t-s}[h_{K}(v(s,\cdot))](x)\dd s.
\end{equation}
We equip $C(]0,\tau],L^{\infty}(\bbT^d))$ with the uniform topology on all compact subset. We can apply the Banach fixed point Theorem to $F$ (see the proof of Pazy \cite{pazy}, Theorem 6.1.2.  Moreover, the mapping $v_0\mapsto v$ is Lipschitz continuous from $L^{\infty}$ to $C(]0,\tau],L^{\infty}(\bbT^d))$.
An application of Proposition \ref{prop:mildprop} proves that $\|v\|_\infty\leqslant 1$.

Since $h_K$ is differentiable, then, if $v_0\in C^2(\bbT^d)$, $v_0$ is in the domain of $\Delta$ and thus $v$ is  classical solution of  \eqref{eq:limitePDE-Kter} (Theorem 6.1.5 \cite{pazy}). Thus $v$ is a classical solution of  \eqref{eq:limitePDE-Kbis} since $\|v\|_{\infty}\leqslant 1$ and a mild solution of \eqref{eq:limitePDE-Kbis}. Now consider an approximating sequence $(v_{0,n})$ in $C^2(\bbT^d)$ of $v_0\in L^{\infty}$, and $(v_n)$  the sequence of mild solutions with initial value $v_{0,n}$ and $v$ the mild solution of \eqref{eq:limitePDE-Kter} with initial value $v$. Then, since $v_0\mapsto v$ is Lipschitz continuous, by the dominated convergence theorem, we get that, uniformly on $[t_0,\tau]\times\bbT^d$ for all $t_0>0$, the right hand side of
\begin{equation}
v_n(t,x)=S^{2\beta,4\alpha}_t v_{0,n}(x)-\int_0^t S^{2\beta,4\alpha}_{t-s}[r'_{K}(v_n(s,\cdot))](x)\dd s
\end{equation}
converges to $S^{2\beta,4\alpha}_t v_{0}(x)-\int_0^t S^{2\beta,4\alpha}_{t-s}[r'_{K}(v(s,\cdot))](x)\dd s$, whereas the left hand side converges to $v$. So we obtain that $v$ is  a mild solution of  \eqref{eq:limitePDE-Kbis}.
\end{proof}

\subsection{Solution of \eqref{eq:limitePDE}}
\subsubsection{\textbf{Existence of a solution}}
We use the same transform as before, and let $h_{\infty}$ be the pointwise limit of $h_K$ given below in \eqref{eq:hinfty}. The equation \eqref{eq:limitePDE} is now formally
 \begin{equation}\label{eq:limitePDE-2}
\partial_tv(t,x)-2\alpha\Delta v(t,x)+(2\beta+1) v=h_{\infty}(v(t,x)).
\end{equation}

For the limiting equation, we prove first that the family $(v_K)_K$ of solutions associated to $h_K$ with common initial value $v_0\in L^{\infty}(\bbT^d)$ in $C(\mathbb{R}^+_*\times\mathbb{\bbT^d})$ is compact (with uniform norm on all compact subset). Then, by taking the limit, any accumulation point $v_{\infty}$ of the sequence satisfy the mild formulation of the limiting equation relaxed as a subdifferential inclusion.

In order to prove this, we set some notations:
\begin{align}
r_{\infty}(q)
&:=-\int_{\frac12}^{\frac12(q+1)}4g_{\infty}(s)\dd s
\end{align}
$h_{\infty}$, the pointwise limit of $h_K$, is the function on $\mathbb{R}$
\begin{align}\label{eq:hinfty}
h_\infty(q)&=2g_{\infty}\left(\frac{1+q}2\right)+q
=-\mathbbm{1}_{q\leqslant -2\rho}+q\mathbbm{1}_{-2\rho<q\leqslant 2\rho}+\mathbbm{1}_{q>2\rho}.
\end{align}
Then $h_\infty$ is non-decreasing and is the left-derivative of the convex function $H_\infty$
\begin{align}
H_\infty(q)&:=-r_\infty(q)+\frac{q^2}2\\
\notag
&=[-q+2\rho+2\rho^2]\1{q\leqslant -2\rho}+\frac12 q^2\1{-2\rho<q\leqslant 2\rho}+ [q-2\rho+2\rho^2]\1{2\rho<q}
\end{align}

The subdifferential of $H_\infty$ at $q$ is defined as 
\[\partial H_\infty(q)=\{p\in \mathbb{R}, H_\infty(q')-H_\infty(q)\geqslant p(q'-q), \text{ for all $q'\in[-1,1]$}\}.\]
In particular we have,
\begin{align}
\partial H_\infty(q)
&=\left\{\begin{aligned}
&\{-1\}&&\text{ for $q<-2\rho$},\\
&[-1,-2\rho]&&\text{ for $q=-2\rho$},\\
&\{q\}&&\text{ for $-2\rho<q<2\rho$},\\
&[2\rho,1]&&\text{ for $q=2\rho$},\\
&\{1\}&&\text{ for $q>2\rho$}.\\
\end{aligned}\right.
\end{align}
We adopt the following definition for a solution of the equation \eqref{eq:limitePDE-2}:
 \begin{equation}\label{eq:limitePDE-3}
\partial_tv(t,x)-2\alpha\Delta v(t,x)+(2\beta+1) v\in \partial H_\infty(v(t,x)).
\end{equation}

\begin{definition}\label{def:subdiff}
We say that $v$ is a mild solution of Equation \eqref{eq:limitePDE-3} if it satisfies, for some $\tau>0$ and all $t\leqslant \tau$
\begin{equation}
v(t,x)=S^{2\beta+1,4\alpha}_t v_0(x)+\int_0^t S^{2\beta+1,4\alpha}_{t-s}[w(s,\cdot)](x)\dd s.
\end{equation}
where $w\in L^2([0,\tau]\times\bbT^d)$, with $w(t,x)\in\partial H_\infty(v(t,x))$ almost everywhere.
\end{definition}

\begin{proposition}\label{prop:existence_sol_PDE}
For $v_0\in  L^{\infty}(\bbT^d)$, any accumulation point (in $C(\mathbb{R}_+^*\times \bbT^d)$ equipped with uniform norm on each compact set), of the sequence $(v_K)$ of solutions given by Proposition \ref{prop:solK} is a mild solution of \eqref{eq:limitePDE-3}.
\end{proposition}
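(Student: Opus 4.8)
The plan is to extract a convergent subsequence, pass to the limit in the mild formula \eqref{eq:mildK}, and identify the limit of the nonlinear term as a measurable selection of $\partial H_\infty$ via the theory of maximal monotone operators. First, for the compactness alluded to before the statement: by Proposition \ref{prop:solK} one has $\|v_K\|_\infty\le 1$, and since $h_K$ maps $[-1,1]$ into $[-1,1]$ also $\|h_K(v_K(s,\cdot))\|_\infty\le 1$, uniformly in $K$. Feeding this into estimate \eqref{eq:modofcont} of Proposition \ref{prop:mildprop} (applied with $\lambda=2\beta+1$, $\gamma=4\alpha$, $g=h_K(v_K)$) shows that $\{v_K\}_K$ is uniformly bounded and equicontinuous on every set $[1/\tau,\tau]\times\bbT^d$; Arzelà–Ascoli together with a diagonal extraction over $\tau\uparrow\infty$ gives relative compactness in $C(\mathbb R_+^*\times\bbT^d)$ for the topology of uniform convergence on compacts, so accumulation points exist. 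Fix one and a subsequence $v_{K_n}\to v_\infty$. Since $w_n:=h_{K_n}(v_{K_n})$ is bounded in $L^\infty([0,\tau]\times\bbT^d)$ for every $\tau>0$, a further (non-relabelled) subsequence satisfies $w_n\rightharpoonup w$ weak-$\ast$ in $L^\infty$, and $w\in L^2([0,\tau]\times\bbT^d)$ because it is bounded.

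The second step is to pass to the limit in \eqref{eq:mildK}. Fix $(t,x)$. The kernel $(s,y)\mapsto \mathbbm{1}_{\{0<s<t\}}\,e^{-(2\beta+1)(t-s)}\,s(4\alpha(t-s),x,y)$ belongs to $L^1([0,\tau]\times\bbT^d)$, its $L^1$-norm being $\int_0^t e^{-(2\beta+1)(t-s)}\dd s<\infty$ since $y\mapsto s(\cdot,x,y)$ has mass one. Hence the weak-$\ast$ convergence of $w_n$ gives $\int_0^t S^{2\beta+1,4\alpha}_{t-s}[w_n(s,\cdot)](x)\dd s\to\int_0^t S^{2\beta+1,4\alpha}_{t-s}[w(s,\cdot)](x)\dd s$, and combined with $v_{K_n}(t,x)\to v_\infty(t,x)$ for every $t>0$ this yields $v_\infty(t,x)=S^{2\beta+1,4\alpha}_t v_0(x)+\int_0^t S^{2\beta+1,4\alpha}_{t-s}[w(s,\cdot)](x)\dd s$ for all $t>0$, in particular on $(0,\tau]$. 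Note that one cannot run this argument with weak $L^2$-convergence alone, since for $d\ge 2$ the above kernel fails to be square-integrable near $s=t$; it is the pairing of weak-$\ast$ $L^\infty$ against the $L^1$ kernel that makes the limit go through.

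It remains to prove the inclusion $w(t,x)\in\partial H_\infty(v_\infty(t,x))$ almost everywhere, which is the heart of the matter. One first checks that each $h_K$ is non-decreasing: from $r_K''=\tfrac12 g_K'(\tfrac{\cdot+1}{2})$ and the elementary bound $g_K'(p)\ge-(c_0^+(p)+c_0^-(p))\ge-1$ — the last inequality holding because the events $\{\rho_0\ge 1-\kappa_N/K_N\}$ and $\{\rho_0\le \kappa_N/K_N\}$ are disjoint, as $\kappa_N/K_N<\tfrac12$ — one gets $h_K'\ge 0$, so $h_K=\partial H_K$ with $H_K$ convex. Since $H_K\to H_\infty$ pointwise on $\bbR$, the convergence is locally uniform, hence the maximal monotone graphs $\partial H_K$ converge to $\partial H_\infty$ in the graph (Mosco) sense. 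The conclusion then follows from the standard Minty closedness lemma for maximal monotone operators: given $(q_0,p_0)$ in the graph of $\partial H_\infty$, pick $(q_0^n,p_0^n)$ in the graph of $\partial H_{K_n}$ with $q_0^n\to q_0$ and $p_0^n\to p_0$; monotonicity of $h_{K_n}$ gives $(w_n-p_0^n)(v_{K_n}-q_0^n)\ge 0$ pointwise, and integrating against an arbitrary $\psi\in C_c(\mathbb R_+^*\times\bbT^d)$, $\psi\ge0$, and letting $n\to\infty$ — using $w_n\rightharpoonup w$ weak-$\ast$ and $(v_{K_n}-q_0^n)\psi\to(v_\infty-q_0)\psi$ uniformly on $\operatorname{supp}\psi$ — yields $\int(w-p_0)(v_\infty-q_0)\psi\ge0$, whence $(w-p_0)(v_\infty-q_0)\ge0$ a.e.; ranging $(q_0,p_0)$ over a countable dense subset of the graph of $\partial H_\infty$ and using its maximal monotonicity gives $w(t,x)\in\partial H_\infty(v_\infty(t,x))$ a.e.

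With the mild identity of the second step and the inclusion of the third, $v_\infty$ meets Definition \ref{def:subdiff} and is a mild solution of \eqref{eq:limitePDE-3}. The main obstacle is precisely the identification of $w$: because $h_\infty$ is discontinuous, no pointwise convergence of $h_{K_n}(v_{K_n})$ can be expected on the level sets $\{v_\infty=\pm 2\rho\}$, and one genuinely needs the monotone-operator structure — monotonicity of $h_K$ together with convexity of $H_\infty$ — to pin down the limiting reaction term there; the two minor technical points are the verification that $h_K$ is non-decreasing and the (otherwise classical) weak–strong closedness of maximal monotone operators.
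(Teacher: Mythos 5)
Your overall strategy coincides with the paper's in the first two steps (Arzel\`a--Ascoli compactness, weak extraction of the nonlinear term, passage to the limit in the mild formula), but the third step --- identification of $w$ as a selection of $\partial H_\infty$ --- is genuinely different. The paper argues pointwise: it uses Lemma \ref{lemma:h_Kbounded} to show that $h_K(v_K(s,y))$ is eventually squeezed between $h_\infty(v_\infty(s,y))$ and $h_\infty(v_\infty(s,y)^+)$, then pairs the $\liminf$ and $\limsup$ with nonnegative test functions and applies Fatou's lemma to deduce $w_-\leqslant w\leqslant w_+$ a.e. Your route is the Minty closedness trick for maximal monotone operators, resting on the monotonicity of $h_K$ and the Mosco (graph) convergence $\partial H_K\to \partial H_\infty$. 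The paper's argument is more self-contained and more elementary, but it leans on the specific structure of $h_K$ around $\pm 2\rho$; yours is more robust and would carry over to any monotone approximation of $h_\infty$. Both are valid.

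Two remarks. First, your observation about the weak--strong pairing is a genuine improvement over the text: the claim that ``the density of the semigroup $S^{2\beta+1,4\alpha}$ is in $L^2(]0,\tau]\times\bbR^d)$'' fails for $d\geqslant 2$, since $\int_{\bbR^d}s_0(t,x,y)^2\dd y\asymp t^{-d/2}$ is not integrable near $t=0$; the weak $L^2$ limit alone would not pass to the limit in the Duhamel integral. Either one pairs, as you do, a weak-$\ast$ $L^\infty$ limit of $w_n$ against the $L^1$-in-$(s,y)$ kernel, or one truncates at $t-\epsilon$ (as the paper does in the proof of Lemma \ref{Lemma_vtildev}). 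Your fix is clean.

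Second, a minor computational slip: from $r_K'(q)=-2g_K\big(\tfrac{q+1}{2}\big)$ one gets $r_K''(q)=-g_K'\big(\tfrac{q+1}{2}\big)$, not $\tfrac12 g_K'\big(\tfrac{q+1}{2}\big)$ (the chain-rule factor $\tfrac12$ cancels the factor $2$, and the sign is negative). Your bound $g_K'(p)\geqslant -(c_0^+(p)+c_0^-(p))\geqslant -1$ is correct --- indeed $(c_0^+)'\geqslant 0$, $(c_0^-)'\leqslant 0$, and the events are disjoint since $\kappa_N/K_N<\tfrac12$ --- so $h_K'(q)=1-r_K''(q)=1+g_K'\big(\tfrac{q+1}{2}\big)\geqslant 0$, and the monotonicity of $h_K$ that the whole Minty argument rests on is indeed true; just fix the intermediate formula.
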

As a consequence of the proposition, there exists $(v(t,x), t\ge 0, x\in\bbT^d)$ a mild solution of \eqref{eq:limitePDE-3} such that $v$ is continuous from $\mathbb{R}_+^*$ to $L^{\infty}(\bbT^d)$, and $\|v\|_\infty\leqslant 1$.

The existence of a solution for a given initial condition $v_0$ is not difficult and can be proved in different ways. Here we adopt some kind of regularization procedure, since we have a natural family of differentiable functions (namely the $(h_K)$) approximating $h_\infty$ and we use the convergence of the sequence $(v_K)$ in the next section to prove the convergence of  the stochastic process. We also present the proof because we need its arguments in order to prove the Theorem \ref{thm:discrconv}.

In the Remark \ref{rem:monotone}, we present another construction of solution(s) using the monotonicity of $h_\infty$ which is interesting since it also gives an insight on the problem of non-uniqueness.

\begin{proof}
For each $K$, we have a mild solution $v_K$ from Proposition \ref{prop:solK}. From Proposition \ref{prop:mildprop}, we have that each solution is uniformly bounded, uniformly continuous on $[1/\tau, \tau]\times\bbT^d$, and the modulus of continuity only depends on $\tau>1$ (since the others parameters are fixed). 

Therefore, by the Arzela-Ascoli Theorem, the sequence $(v_K)_K$ is compact on the space $C([1/\tau,\tau]\times \bbR^d)$ and we can extract a subsequence converging uniformly in $C([1/\tau,\tau]\times \bbR^d)$, and then by a diagonal argument, a sequence converging to a limit $v_\infty$ in  $C(]0,\infty[\times \bbR^d)$, uniformly on each compact. Note that since $\|v_K\|\leqslant 1$ for all $K$, we also have $\|v_{\infty}\|\leqslant 1$. We show that $v_\infty$ satisfies \eqref{eq:limitePDE-3}.

Let us assume that $\rho>0$.
Denote for any $s>0$, $\cA_\rho(s):=\{y\in \bbR^d \colon |v_\infty(s,y)|= 2\rho\}$.
Note that since $h_\infty$ is uniformly continuous on $[-1,-2\rho[$, $]-2\rho,2\rho[$ and $]2\rho,1]$, we have that for any $y\notin\cA_\rho(s)$, $\lim_{K\to\infty}h_K(v_K(s,y))=h_\infty(v_\infty(s,y))$. Let $y\in\cA_\rho(s)$ and assume $v_\infty(s,y)=2\rho$ without loss of generality, since $v_K(s,y)$ converges to $v_\infty(s,y)$, for all $\epsilon>0$ such that $2\rho-\epsilon >0$, there exists $K_0$ such that for all $K\geqslant K_0$, $2\rho-\epsilon<v_K(s,y)<2\rho+\epsilon$.
Thus, using Lemma \ref{lemma:h_Kbounded}, we have that, for all $K\geqslant K_0$, $2\rho-2\epsilon\leqslant h_K(v_K(s,y))\leqslant 1$.
Then taking the limits in $K$ and $\epsilon\to0$, we get
\begin{align}\label{eq:lim_inf_sup_h}
h_\infty(2\rho)=2\rho\leqslant \liminf_{K\to\infty} h_K(v_K(s,y))\leqslant \limsup _{K\to\infty} h_K(v_K(s,y))\leqslant 1=h_\infty(2\rho^+).
\end{align}

For $\rho=0$, we have the same inequality since then $h_{\infty}(0)=-1$ and $h_\infty(0+)=1$.

Let $w_+(s,y)=\limsup _{K\to\infty} h_K(v_K(s,y))$ and $w_-(s,y)=\liminf_{K\to\infty} h_K(v_K(s,y))$, thus we have that for all $(s,y)\in]0,+\infty[\times\bbT^d$:
\begin{align}
h_\infty(v_\infty(s,y))\leqslant w_-(s,y)\leqslant  w_+(s,y)\leqslant h_\infty(v_\infty(s,y)^+).
\end{align}

Since $h_K(v_K)$ is bounded, by the Banach Alaoglu Theorem, the sequence is weakly compact in $L^2(]0,\tau[\times\bbT^d)$, and we have a subsequence of $(h_K(v_K))_K$ converging weakly to $w\in L_{loc}^2(]0,+\infty[\times\bbT^d)$. Since the density of the semigroup $S^{2\beta+1,4\alpha}$ is in $L^2(]0,\tau]\times \bbR^d)$ for all $T>0$, we have as $K\to+\infty$,
\begin{align}
v_\infty(t,x)&= S^{2\beta+1,4\alpha}_t v_0(x)+\int_0^t S^{2\beta+1,4\alpha}_{t-s}w(s,\cdot)(x)\dd s.
\end{align}
Moreover, $w_-$ and $w_+$ are bounded and therefore in $L^2(]0,\tau]\times\bbT^d)$. Let $\phi\in L^2(]0,\tau]\times \bbT^d)$ and $\phi\geqslant 0$, by the Fatou Lemma we get
\begin{align}\notag
0=\int_{]0,\tau]\times\bbT^d}&\liminf_{K\to+\infty}(h_K(v_K)-w_-)\phi\\&\leqslant \liminf_{K\to+\infty}\int_{]0,\tau]\times\bbT^d}(h_K(v_K)-w_-)\phi=\int_{]0,\tau]\times\bbT^d}(w-w_-)\phi.
\end{align}
We also have
\begin{align}\notag
0=\int_{]0,\tau]\times\bbT^d}&\liminf_{K\to+\infty}(w_+ -h_K(v_K))\phi\\&\leqslant \liminf_{K\to+\infty}\int_{]0,\tau]\times\bbT^d}(w_+ - h_K(v_K))\phi=\int_{]0,\tau]\times\bbT^d}(w_+-w)\phi.
\end{align}
Thus, almost everywhere on $]0,+\infty[\times\bbT^d$, we have that
\begin{align}\label{eq:lim_inf_sup_w}
h_\infty(v_\infty)\leqslant w_-\leqslant w\leqslant  w_+\leqslant h_\infty(v_\infty^+).
\end{align}
Therefore, $w\in\partial H_\infty(v_\infty)$ a.e.
\end{proof}

\subsubsection{\textbf{Uniqueness of solution of \eqref{eq:limitePDE-3}}}

The main problem concerns the uniqueness of a solution.  We prove first that, we do not have uniqueness for a constant initial condition $v_0(x)=2\rho$ when $2\rho<\frac1{1+2\beta}$, so we are in the case of segregation or metastable segragation described by Figure \ref{fig1}.
\begin{remark}\label{rem:non-unique}
We describe three possible solutions starting from the initial condition $v_0(x)=2\rho$ when $2\rho<\frac1{1+2\beta}$.

Note that 
\begin{align}
S^{2\beta+1,4\alpha}_t v_0(x)=\int_{\mathbb{R}^d}e^{-(2\beta +1)t}s_0(t,x,y)2\rho\dd y=2\rho e^{-(2\beta +1)t}.
\end{align}
Suppose that $v$ does not depend on $x$, $v(t,x)=c(t)$ for all $x\in\bbT^d$, we have
\begin{align}
\int_0^t S^{2\beta+1,4\alpha}_{t-s}[h_{\infty}(v(s,\cdot))](x)\dd s
&=\int_0^t h_{\infty}(c(s)) e^{-(2\beta +1)(t-s)}\dd s.
\end{align}

Let us consider the functions 
\begin{align}
v^1(t,x)=c^1(t)&=2\rho e^{-2\beta t}\\\notag
v^2(t,x)=c^2(t)&=2\rho e^{ -(2\beta+1)t}+\frac1{1+2\beta}\left(1-e^{ -(2\beta+1)t}\right)\\&=2\rho+\left(\frac1{1+2\beta}-2\rho\right) \left(1-e^{ -(2\beta+1)t}\right).
\end{align}
Since, $c^1(t)\in[0,2\rho[$, for $t>0$, we have $h_{\infty}(c^1(t))=c^1(t)$ and then 
\begin{align}
\int_0^t h_{\infty}(c^1(s)) e^{-(2\beta +1)(t-s)}\dd s
&=\int_0^t 2\rho e^{-2\beta s}e^{-(2\beta +1)(t-s)}\dd s=2\rho e^{-(2\beta +1)t}(e^{t}-1).
\end{align}
Therefore
\begin{align}\notag
S^{2\beta+1,4\alpha}_t v_0(x)+\int_0^t S^{2\beta+1,4\alpha}_{t-s}[h_{\infty}(v^1(s,\cdot))](x)\dd s
&=2\rho e^{-(2\beta +1)t}+2\rho e^{-(2\beta +1)t}(e^{t}-1)\\&=2\rho e^{-2\beta t}=v^1(t,x).
\end{align}

Since $2\rho<\frac1{1+2\beta}$, $c^2(t)\in]2\rho,1]$, for $t>0$, we have $h_{\infty}(c^2(t))=1$ and then by the same computation,  
\begin{align}
\int_0^t h_{\infty}(c^2(s)) e^{-(2\beta +1)(t-s)}\dd s
&=\frac1{1+2\beta}(1-e^{ -(2\beta+1)t}).
\end{align}
Therefore, we also have
\begin{align}
S^{2\beta+1,4\alpha}_t v_0(x)+\int_0^t S^{2\beta+1,4\alpha}_{t-s}[h_{\infty}(v^2(s,\cdot))](x)\dd s
&=v^2(t,x).
\end{align}
Thus, both $v^1$ and $v^2$ are mild solutions to \eqref{eq:limitePDE-2} and thus to  \eqref{eq:limitePDE-3} with the same initial conditions. Note that at $t=0$, we have $v^1(0,x)=v^2(0,x)=2\rho$ and
\begin{align}
\partial _t v^1(0,x)&=-4\beta \rho=-(2\beta +1)2\rho+2\rho =-(2\beta +1)2\rho+h_\infty(2\rho^-)\\
\partial _t v^2(0,x)&=-2\rho (2\beta+1)+1=-(2\beta +1)2\rho+h_\infty(2\rho^+).
\end{align}
We see that non uniqueness comes from the fact that at $t=0$, where $v(t,x)=2\rho$, we have at least two choices for the derivative due to the fact that $h_\infty$ is not continuous.

Note that if we consider the mild solution to the subdiffrential inclusion \eqref{eq:limitePDE-3}, then we have at least a third solution: $v^3(t,x)=2\rho$. We consider $w(t,x)=2\rho(2\beta+1)$, we have
\begin{align}
S^{2\beta+1,4\alpha}_t v_0(x)+\int_0^t S^{2\beta+1,4\alpha}_{t-s}[w(s,\cdot)](x)\dd s
&=2\rho e^{-(2\beta +1)t}+2\rho(2\beta+1)\int_0^t e^{-(2\beta +1)(t-s)}\dd s\\
&=2\rho.\notag
\end{align}
Since, $2\rho<\frac1{1+2\beta}$, we have that $2\rho<2\rho(2\beta+1)<1$ so $w(t,x)\in\partial H_{\infty}(2\rho)$.
\end{remark}
Therefore, we cannot expect uniqueness for all initial condition, we have to impose some condition on the initial condition if we want a unique solution.

In the literature, we can find different conditions ensuring that the solution of Equation \eqref{eq:limitePDE-3} is unique. Adapting \cite{gianni} and \cite{deguchi}, we prove that the regularity of the initial condition at the levels where the non-linearity $h_\infty$ is not continuous is sufficient.
\begin{definition}
A function $v_0:\bbT^d\to [-1,1]$ in $C^1(\bbT^d)$ is \emph{regular at level $q\in]-1,1[$} if for all $x\in\bbT^d$, such that $v_0(x)=q$, we have $\nabla v_0(x)\neq 0$.
\end{definition}

\begin{proposition}\label{prop:v_t-v_0}
For $v_0\in C^1(\bbT^d)$, such that $\nabla v_0$ is Lipschitz on $\bbT^d$ and regular at levels $2\rho$ and $-2\rho$, the solution $v$ to Equation \eqref{eq:limitePDE-3} is locally unique. Moreover, the Lebesgue measure of the set $\cA_\rho(s):=\{y\in \bbR^d \colon |v(s,y)|=2\rho\}$ is zero.
\end{proposition}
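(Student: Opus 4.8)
The plan is to show first that the level sets $\{v(s,\cdot)=\pm2\rho\}$ are $C^1$ hypersurfaces for all small $s$ — which both yields the measure-zero claim for $\cA_\rho(s)$ and turns the subdifferential inclusion \eqref{eq:limitePDE-3} into the genuine mild equation \eqref{eq:limitePDE-2} driven by $h_\infty$ — and then to prove uniqueness for the latter by a Gronwall estimate that exploits the thinness of the set on which two solutions can disagree.

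\emph{Hypersurface structure and reduction.} Let $v$ be any mild solution of \eqref{eq:limitePDE-3} in the sense of Definition \ref{def:subdiff}, so $v(t)=S^{2\beta+1,4\alpha}_tv_0+\int_0^tS^{2\beta+1,4\alpha}_{t-s}[w(s)]\,\dd s$ with $\|w\|_\infty\le1$. The Gaussian gradient bound $\|\nabla S^{2\beta+1,4\alpha}_\sigma g\|_\infty\le C\sigma^{-1/2}\|g\|_\infty$ (implicit in the computations of Proposition \ref{prop:mildprop}) shows that $v(t,\cdot)\in C^1(\bbT^d)$ for every $t>0$, and that $\|v(t)-v_0\|_\infty\to0$ and $\|\nabla v(t)-\nabla v_0\|_\infty\to0$ as $t\to0^+$ (use $S_tv_0\to v_0$ and $S_t\nabla v_0\to\nabla v_0$ uniformly since $v_0\in C^1(\bbT^d)$, together with $\|\int_0^tS_{t-s}w(s)\,\dd s\|_\infty\le t$ and $\|\nabla\int_0^tS_{t-s}w(s)\,\dd s\|_\infty\le2C\sqrt t$). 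Since $v_0$ is regular at the level $2\rho$, the compact set $\{v_0=2\rho\}$ lies in an open $U^+$ with $|\nabla v_0|\ge2c_0>0$ on $U^+$ and $|v_0-2\rho|$ bounded away from $0$ off $U^+$; hence for $t$ small $\{v(t)=2\rho\}\subset U^+$ and $|\nabla v(t)|\ge c_0$ there, so by the implicit function theorem $\{v(t)=2\rho\}$ is a $C^1$ embedded hypersurface, and similarly at $-2\rho$ (and at $s=0$ directly). Thus $\cA_\rho(s)$ is a $C^1$ hypersurface, hence Lebesgue-null; consequently for a.e. $x$ one has $v(s,x)\notin\{\pm2\rho\}$, where $\partial H_\infty(v(s,x))=\{h_\infty(v(s,x))\}$, so $w(s,\cdot)=h_\infty(v(s,\cdot))$ a.e. and $v$ solves the mild formulation of \eqref{eq:limitePDE-2}. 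We also record the quantitative form: there are $c,\tau>0$ such that for $t\le\tau$ and $\epsilon\le c$, the set $\{\,|v(t)-2\rho|\le\epsilon\,\}$ is contained in the $\epsilon/c_0$-tube around the $C^1$ hypersurface $\{v(t)=2\rho\}$, and these hypersurfaces have uniformly bounded geometry for $t\le\tau$ because they are $C^1$-close to the fixed hypersurface $\{v_0=2\rho\}$ (here $\nabla v_0$ Lipschitz makes the dependence quantitative); likewise at $-2\rho$.

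\emph{Uniqueness.} Let $v_1,v_2$ be two mild solutions of \eqref{eq:limitePDE-3} with datum $v_0$, set $\phi=v_1-v_2$, $\delta(t)=\|\phi(t)\|_\infty$; by the previous step both satisfy \eqref{eq:limitePDE-2} and $\delta(t)\to0$ as $t\to0^+$. Suppose for contradiction $\delta\not\equiv0$ on $[0,\tau]$. Write $h_\infty=\tilde h+j$ with $\tilde h$ the $1$-Lipschitz clamp ($\tilde h(q)=q$ on $[-2\rho,2\rho]$, constant outside) and $j=h_\infty-\tilde h$ a monotone step function with jumps only at $\pm2\rho$ and $\|j\|_\infty\le1$. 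From the mild formula, $\delta(t)$ is at most $\int_0^t\|S_{t-s}[\tilde h(v_1(s))-\tilde h(v_2(s))]\|_\infty\,\dd s+\int_0^t\|S_{t-s}[j(v_1(s))-j(v_2(s))]\|_\infty\,\dd s$; the first term is $\le\int_0^t\delta(s)\,\dd s$. For the second, $j(v_1(s))-j(v_2(s))$ is bounded by $2$ and supported on the set $E_s$ where $v_1(s),v_2(s)$ lie on opposite sides of $2\rho$ or of $-2\rho$; once $\delta(s)\le c$ the quantitative estimate above puts $E_s$ inside a tube of radius $\le\delta(s)/c_0$ around a $C^1$ hypersurface of uniformly bounded geometry, and since the heat semigroup at time $t-s$ of the indicator of such a tube has sup-norm at most a constant times $\min(1,\mathrm{radius}/\sqrt{t-s})$,
\begin{equation*}
\big\|S^{2\beta+1,4\alpha}_{t-s}[j(v_1(s))-j(v_2(s))]\big\|_\infty\le C_1\min\big(1,\delta(s)/\sqrt{t-s}\big).
\end{equation*}
Put $t_0=\inf\{t\ge0:\delta(t)>0\}$, so $t_0<\tau$ and $\delta(t_0)=0$; for $t$ slightly above $t_0$, with $\delta^*=\sup_{[t_0,t]}\delta\le c$,
\begin{equation*}
\delta(t)\le(t-t_0)\delta^*+C_1\int_{t_0}^t\min\big(1,\delta^*/\sqrt{t-s}\big)\,\dd s\le\big((t-t_0)+2C_1\sqrt{t-t_0}\,\big)\delta^*,
\end{equation*}
so for $t-t_0$ small the bracket is $<1$; taking the supremum over such $t$ forces $\delta^*=0$, contradicting $t_0=\inf\{\delta>0\}$. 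Hence $\delta\equiv0$ on $[0,\tau]$.

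\emph{Main obstacle.} The delicate point is the uniqueness step: controlling in the Gronwall estimate the contribution of the jumps of $h_\infty$. It goes through only because the transversality of $v_0$ at $\pm2\rho$ confines the straddling region $E_s$ to an $O(\delta(s))$-thin tube around a $C^1$ hypersurface whose geometry is uniform for short times, so that the heat semigroup gains the extra factor $\sqrt{t-s}$ — this is the mechanism we adapt from Gianni \cite{gianni} and Deguchi \cite{deguchi}, and it is exactly where the hypotheses that $\nabla v_0$ is Lipschitz and $v_0$ is regular at levels $\pm2\rho$ are used. The degenerate case $\rho=0$, where the two jump levels collapse to $q=0$ and the jump has size $2$, is handled identically.
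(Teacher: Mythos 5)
The proposal is correct and follows essentially the same route as the paper. Your decomposition $h_\infty=\tilde h+j$ (Lipschitz clamp plus monotone step) is identical to the paper's splitting of $w_i$ into the continuous part $f_\infty(v_i)$ and the residual step $g_i$; both arguments then confine the set where the step terms disagree to an $O(\delta(s))$-thin tube around the $C^1$ level hypersurface supplied by the transversality hypothesis, and both extract the crucial $\sqrt{t-s}$ gain from integrating the heat kernel across that thin direction. The only cosmetic difference is the Gronwall set-up: the paper works directly with $e(t)=\|v_1-v_2\|_{L^\infty([0,t]\times\bbT^d)}$ on $[0,\tau]$, getting $e(\tau)\le(\tau+C\sqrt\tau)e(\tau)$ and concluding $e(\tau)=0$ for $\tau$ small, whereas you run the same estimate as a contradiction argument localized at $t_0=\inf\{\delta>0\}$; the two are interchangeable. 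You also spell out a bit more explicitly that the regular-level-set hypothesis propagates via $\|\nabla v(t)-\nabla v_0\|_\infty\le C\sqrt t$ (the paper's Lemma \ref{lem:estimatesv0}), which gives the measure-zero claim for $\cA_\rho(s)$ and justifies replacing the inclusion by the genuine equation with $h_\infty$; the paper relies on the same lemma but keeps this reduction implicit.
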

We adapt two arguments by \cite{gianni} and  \cite{deguchi}.

\begin{lemma}\label{lem:estimatesv0}
If $v$ is a mild solution of \eqref{eq:limitePDE-3} with $v_0\in C^1(\bbT^d)$, and such that $\nabla v_0$ is Lipschitz on $\bbT^d$, then, for all $\tau>0$, there exists a constant $C>0$ such that, for all $t\in[0,\tau]$
\begin{align}
\|v(t)-v_0\|_\infty&\leqslant C t^{1/2},&\|\nabla v(t)-\nabla v_0\|_\infty&\leqslant C t^{1/2}.
\end{align}
\end{lemma}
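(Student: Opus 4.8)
The plan is to work directly from the mild formula
\[
v(t,x)=S^{2\beta+1,4\alpha}_t v_0(x)+\int_0^t S^{2\beta+1,4\alpha}_{t-s}[w(s,\cdot)](x)\,\dd s,
\]
where, by Definition \ref{def:subdiff}, $w(s,\cdot)\in\partial H_\infty(v(s,\cdot))$ a.e. The first observation to record is that, by the explicit description of $\partial H_\infty$ computed above, every element of $\partial H_\infty$ lies in $[-1,1]$, so $\|w\|_\infty\le 1$. I abbreviate $S_t:=S^{2\beta+1,4\alpha}_t$, $\lambda:=2\beta+1>0$, $\gamma:=4\alpha>0$, and use the representations recalled in Section \ref{sec:reaction-diff}, namely $S_t g(x)=e^{-\lambda t}\bbE_x[\tilde g(X_{\gamma t})]=e^{-\lambda t}\int_{\bbR^d}s_0(\gamma t,0,z)\,\tilde g(x-z)\,\dd z$, together with the two Gaussian bounds $\bbE_x\|X_{\gamma t}-x\|\le(d\gamma t)^{1/2}$ and $\|\nabla_x s_0(\sigma,x,\cdot)\|_{L^1(\bbR^d)}\le C_d\,\sigma^{-1/2}$, the latter following from $\partial_{z_i}s_0(\sigma,0,z)=-\tfrac{z_i}{\sigma}s_0(\sigma,0,z)$.

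\textbf{The $L^\infty$ estimate.} I would split $v(t)-v_0=(S_tv_0-v_0)+\int_0^t S_{t-s}[w(s,\cdot)]\,\dd s$. The Duhamel part has sup norm $\le\int_0^t\|w(s,\cdot)\|_\infty\,\dd s\le t$ because $S_{t-s}$ is an $L^\infty$-contraction. For $S_tv_0-v_0$, writing $S_tv_0(x)-v_0(x)=e^{-\lambda t}\bbE_x[\tilde v_0(X_{\gamma t})-v_0(x)]+(e^{-\lambda t}-1)v_0(x)$ and using that $v_0\in C^1(\bbT^d)$ makes $\tilde v_0$ globally Lipschitz with constant $\|\nabla v_0\|_\infty$, one bounds this by $\|\nabla v_0\|_\infty(d\gamma t)^{1/2}+\lambda t\|v_0\|_\infty$. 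On $[0,\tau]$ both contributions are $O(t^{1/2})$, which gives the first estimate.

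\textbf{The gradient estimate.} First I note that the mild formula already shows $v(t,\cdot)\in C^1(\bbT^d)$ for every $t\ge0$ (the Duhamel term being differentiable in $x$ via the kernel bound below), so $\nabla v(t)=\nabla S_tv_0+\int_0^t\nabla_xS_{t-s}[w(s,\cdot)]\,\dd s$. In the semigroup term I commute the gradient with the convolution, $\nabla S_tv_0=S_t(\nabla v_0)$ — justified by dominated convergence (or integration by parts), since $\nabla\tilde v_0$ is bounded and periodic — and then apply the estimate of the previous paragraph to $g=\nabla v_0$, which is Lipschitz by hypothesis; this yields $\|S_t(\nabla v_0)-\nabla v_0\|_\infty\le Ct^{1/2}$. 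In the Duhamel term the gradient cannot be transferred to $w$, so it goes onto the kernel:
\[
\big|\nabla_xS_{t-s}[w(s,\cdot)](x)\big|\le e^{-\lambda(t-s)}\|w(s,\cdot)\|_\infty\,\|\nabla_x s_0(\gamma(t-s),x,\cdot)\|_{L^1}\le \frac{C_d}{\sqrt{\gamma(t-s)}}.
\]
Since $(t-s)^{-1/2}$ is integrable on $[0,t)$, this both justifies differentiating under the $\dd s$-integral (and the $C^1$-regularity just claimed) and gives $\big|\int_0^t\nabla_xS_{t-s}[w(s,\cdot)]\,\dd s\big|\le 2C_d\gamma^{-1/2}\sqrt t$. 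Summing the two contributions gives $\|\nabla v(t)-\nabla v_0\|_\infty\le C\sqrt t$ on $[0,\tau]$, with $C$ depending only on $\tau,\alpha,\beta,d$, $\|v_0\|_{C^1}$ and the Lipschitz constant of $\nabla v_0$.

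\textbf{Main obstacle.} No step is genuinely deep; the only point requiring care is the Duhamel gradient term, where, because $w$ carries no regularity, differentiability must be extracted from the singular but integrable kernel estimate $\|\nabla_xs_0(\sigma,\cdot,\cdot)\|_{L^1}\lesssim\sigma^{-1/2}$, with the convergence of $\int_0^t(t-s)^{-1/2}\,\dd s$ being what makes the argument go through. The Lipschitz assumption on $\nabla v_0$ is used precisely to upgrade $\|S_t(\nabla v_0)-\nabla v_0\|_\infty=o(1)$ to the sharp rate $O(t^{1/2})$.
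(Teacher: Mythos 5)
Your proposal is correct and follows essentially the same route as the paper: the same splitting into the semigroup term and the Duhamel term, the Lipschitz bounds on $v_0$ and $\nabla v_0$ combined with the Gaussian first-moment estimate for the semigroup part, the commutation $\nabla S_t v_0 = S_t(\nabla v_0)$, and the transfer of the gradient onto the heat kernel with the integrable singularity $\|\nabla_x s_0(\sigma,x,\cdot)\|_{L^1}\lesssim \sigma^{-1/2}$ for the Duhamel part. The only (harmless) difference is that you explicitly keep the $(e^{-\lambda t}-1)v_0(x)$ term of order $t$, which the paper's write-up glosses over.
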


\begin{proof} Since $v$ is  a mild solution of \eqref{eq:limitePDE-3} there exists $w\in L^\infty_{loc}([0+\infty[\times\bbT^d)$ with $\|w\|_\infty\leqslant 1$ since $w\in\partial H_\infty (v)$. Thus we get
\begin{align}
v(t)-v_0=S_t^{2\beta+1,4\alpha}v_0-v_0+\int_0^t S_{t-s}^{2\beta+1,4\alpha}w(s)\dd s.
\end{align}
Then, we have for the last integral
\begin{align*}
\left\|\int_0^t S_{t-s}^{2\beta+1,4\alpha}w(s)\dd s\right\|_\infty\leqslant \int_0^te^{-(2\beta+1)(t-s)}\dd s\leqslant t.
\end{align*}
We have also
\begin{align*}
|S_t^{2\beta+1,4\alpha}v_0(x)-v_0(x)|
&\leqslant  e^{-(2\beta+1)t}\int_{\bbT^d}s(4\alpha t,x,y)|v_0(y)-v_0(x)|\dd y\\
&\leqslant L e^{-(2\beta+1)t}\int_{\bbR^d}s_0(4\alpha t,x,y)\|y-x\|_2\dd y\\
&\leqslant L e^{-(2\beta+1)t}\sqrt{4\alpha t d}
\leqslant L\sqrt{4\alpha d}t^{1/2}
\end{align*}
where $L$ is the Lipschitz constant of $v_0$ and the third inequality comes from the computation of the upper bound of the quadratic norm of $d$ independent random variables with common variance $4\alpha t$.
Therefore we obtain
\begin{align}
\|v(t)-v_0\|_\infty&\leqslant (L\sqrt{4\alpha d}+\sqrt{\tau}) t^{1/2}
\end{align}
For the second bound, we use the fact that $\frac{\dd}{\dd x_i}s_0(t,x,y)=-\frac{\dd}{\dd y_i}s_0(t,x,y)$, thus we have, using an integration by parts
\begin{align}
\partial_{x_i}S_t^{2\beta+1,4\alpha}(v_0)=S_t^{2\beta+1,4\alpha}(\partial_{x_i}v_0).
\end{align}
Then, we have
\begin{align}\notag
\partial_{x_i} v(t)-\partial_{x_i}v_0=S_t^{2\beta+1,4\alpha}(\partial_{x_i}v_0)&-(\partial_{x_i} v_0)\\&+\int_0^t \int_{\mathbb{R}^d}e^{-(2\beta +1)(t-s)}\partial_{x_i}s_0(4\alpha (t-s),x,y)w(s,y)\dd y\dd s.
\end{align}
We can treat both integrals as before, for the last integral:
\begin{align*}
&\left|\int_0^t \int_{\mathbb{R}^d}e^{-(2\beta +1)(t-s)}\partial_{x_i}s_0(4\alpha (t-s),x,y)w(s,y)\dd y\dd s\right|
\\&\qquad\leqslant 
\int_0^t \frac{e^{-(2\beta +1)(t-s)}}{4\alpha(t-s)}\int_{\mathbb{R}^d}|x_i-y_i|s_0(4\alpha (t-s),x,y)\dd y\dd s\\
&\qquad\leqslant
\int_0^t \frac{e^{-(2\beta +1)(t-s)}}{4\alpha(t-s)}\sqrt{2\alpha(t-s)}\dd s\\
&\qquad\leqslant \frac{1}{\sqrt{2\alpha}}\int_0^{\sqrt{t}}  e^{-(2\beta +1)u^2}\dd u
\leqslant \frac{\sqrt{t}}{\sqrt{2\alpha}}.  
\end{align*}
For the first integral, we have the same estimates as before
\begin{align}
|S_t^{2\beta+1,4\alpha}(\partial_{x_i}v_0)(x)-\partial_{x_i}v_0(x)|
\leqslant L'\sqrt{4\alpha d}t^{1/2}
\end{align}
where $L'$ is the maximum of the Lipschitz constants of $(\partial_{x_i}v_0)_i$.
Thus, we get 
\begin{align}
\|\nabla v(t)-\nabla v_0\|_\infty&\leqslant \left(L'\sqrt{4\alpha d}+\frac1{\sqrt{2\alpha}}\right) t^{1/2}
\end{align}
\end{proof}

We now prove Proposition \ref{prop:v_t-v_0}.
\begin{proof} Let us assume that we have two solutions, $v_1$ and $v_2$ and let $e(t)=\|v_1-v_2\|_{L^{\infty}([0,t]\times\mathbb{R}^d)}$. Note that the previous Lemma entails that, for all $\tau>0$, there exists $C$, such that for $t<\tau$, we have $e(t)\leqslant C\sqrt{t}$.

We define $I^+_{s,t}=\{(s,y), s\leqslant t, |v_1(s,y)-2\rho|\leqslant e(t)\}$ and $I^-_{s,t}=\{(s,y), s\leqslant t, |v_1(s,y)+2\rho|\leqslant e(t)\}$.

Since $v_1$ and $v_2$ are solutions of \eqref{eq:limitePDE-3}, there exists $w_1$ and $w_2$ such that $w_1\in\partial H_{\infty}(v_1)$ a.e. and $w_2\in\partial H_{\infty}(v_2)$. We can decompose each $w_i$ as $w_i(t,x)=f_\infty(v_i(t,x))+g_i(t,x)$ where $f_\infty$ is the continuous part of $\partial H_\infty$:
\begin{align}
f_\infty(q)=\left\{
\begin{aligned}
-2\rho&\text{ for $q\in[-1,-2\rho]$}\\
q&\text{ for $q\in[-2\rho,2\rho]$}\\
2\rho&\text{ for $q\in[2\rho,1]$}
\end{aligned}\right.
\end{align}
and $g_i(t,x)=w_i(t,x)-f_\infty(v_i(t,x))$. Note that 
\begin{align}
g_i(t,x)&=-1+2\rho \text{ a.e. on $\{(t,x), v_i(t,x)<-2\rho\}$},\notag\\ 
g_i(t,x)&=0 \text{ a.e. on $\{(t,x), -2\rho <v_i(t,x)<2\rho\}$},\\ 
g_i(t,x)&=1-2\rho \text{ a.e. on $\{(t,x), v_i(t,x)>2\rho\}$},\notag
\end{align}
since $w_i=h_\infty(v_i)$ a.e. on $\{(t,x), |v_i(t,x)|=2\rho\}$.

As a consequence we have that, up to a negligible set, $\{(s,y), s\leqslant t, g_1(s,y)\neq g_2(s,y)\}\subset I^+_{s,t}\cup I^-_{s,t}$ since,  $g_1(s,y)\neq g_2(s,y)$ entails that one of the following inequalities is true $v_2(s,y)<2\rho<v_1(s,y)$ or $v_2(s,y)<-2\rho<v_1(s,y)$ or $v_1(s,y)<2\rho<v_2(s,y)$ or $v_1(s,y)<-2\rho<v_2(s,y)$. For each case, the inclusion is true: for the first one for example, if $s\leqslant t$ and $v_2(s,y)<2\rho<v_1(s,y)$
\begin{align}
e(t)\geqslant v_1(s,y)-v_2(s,y)=v_1(s,y) -2\rho+2\rho -v_2(s,y)\geqslant v_1(s,y)-2\rho\geqslant 0
\end{align}
thus $(s,y)\in I^+_{s,t}$. The same is true for the other cases.

Therefore we obtain the following expression for the difference $v_1-v_2$:
\begin{equation}\label{ints:Ist}
\begin{split}
v_1(t,x)-v_2(t,x)
=&\int_0^t \int_{\mathbb{T}^d} e^{-(2\beta +1)(t-s)}s(4\alpha(t-s),x,y)(f_\infty(v_1(s,y))-f_\infty(v_2(s,y)))\dd y \dd s\\
& +  \int_0^t \int_{I^+_{s,t}\cup I^-_{s,t}} e^{-(2\beta +1)(t-s)}s(4\alpha(t-s),x,y)(g_1(s,y)-g_2(s,y))\dd y \dd s.
\end{split}
\end{equation}
For the first integral in \eqref{ints:Ist} we note that $f_\infty$ is $1$-Lipschitz, thus
\begin{align*}
&\left|\int_0^t \int_{\mathbb{T}^d} e^{-(2\beta +1)(t-s)}s(4\alpha(t-s),x,y)(f_\infty(v_1(s,y))-f_\infty(v_2(s,y)))\dd y \dd s\right|\\
&\leqslant 
\int_0^t \int_{\mathbb{T}^d} e^{-(2\beta +1)(t-s)}s(4\alpha(t-s),x,y) e(t)\dd y\dd s
\leqslant t e(t).
\end{align*}

For the second integral in \eqref{ints:Ist} we note that $|g_i|\leqslant 1-2\rho$, then we first have
\begin{align*}
&\left|\int_0^t \int_{I^+_{s,t}\cup I^-_{s,t}} e^{-(2\beta +1)(t-s)}s(4\alpha(t-s),x,y)(g_1(s,y)-g_2(s,y))\dd y \dd s\right|\\
&\leqslant
2(1-2\rho)\int_0^t \int_{I^+_{s,t}\cup I^-_{s,t}} e^{-(2\beta +1)(t-s)}s(4\alpha(t-s),x,y)\dd y \dd s.
\end{align*}

Let $s\leqslant t$, since $v_0$ is regular on the level set $\{v_0=2\rho\}$ which is compact (since $\bbT^d$ is) and $\nabla v_0$ is a Lipschitz function, we can find $\delta,\eta>0$ such that on $\{v_0=2\rho\}+B_{\delta}(0)$, $|\nabla v_0(x)|>\eta$.
Using the second part of Lemma \ref{lem:estimatesv0}, and since $e(t)\leqslant C\sqrt{t}$, there exists $T>0$ such that for $s\leqslant t\leqslant T$, $I^+_{s,t}\subset \{v_0=2\rho\}+B_{\delta}(0)$ and on $I^+_{s,t}$, $|\nabla v_1(s)|>\eta/2$.

Since $I^+_{s,t}$ is compact and $\nabla v_1(s)\neq0$, by the implicit function theorem, we can find a finite cover by open balls $(B_i)_{1\leqslant i\leqslant N}$ centered on points on $I^+_{s,t}$ such that locally on each ball $B_i$, the level set $\{v_1(s,y)=2\rho\}$ is the graph of a function, e.g $y_1=\phi(y_2,\dots y_d)$. Note that since $\{v_0=2\rho\}$ is compact, $N$ is uniform in $s\leqslant T$, since by the lemma, we can make the cover of open balls on $\{v_0=2\rho\}$ and take their traces on $\{v_1(s,y)=2\rho\}$.
By the mean value theorem on the first coordinate $y_1$ of $v_1(s)$, we have $I^+_{s,t}\cap B_i\subset [-2e(t)/\nu,2e(t)/\nu] \times \Pi_1(B_i)$, where $\Pi_1$ is the projection along the first coordinate.
Thus,
\begin{align*}
\int_0^t &\int_{I^+_{s,t}\cap B_i} e^{-(2\beta +1)(t-s)}s(4\alpha(t-s),x,y)\dd y \dd s\\
&\leqslant 
\int_0^t  e^{-(2\beta +1)(t-s)}\frac{4e(t)}{\nu\sqrt{4\alpha (t-s)}}\int_{ \Pi_1(B_i)}s(4\alpha(t-s),0,(0,y_2,\dots y_d))\dd y_2\cdots \dd y_d \dd s\\
&\leqslant 
\int_0^t  e^{-(2\beta +1)(t-s)}\frac{2e(t)}{\nu\sqrt{\alpha (t-s)}} \dd s
\leqslant \frac{2e(t)t^{1/2}}{\nu\sqrt{\alpha}}.
\end{align*}
Thus 
\begin{align}
\int_0^t \int_{I^+_{s,t}} e^{-(2\beta +1)(t-s)}s(4\alpha(t-s),x,y)\dd y \dd s
&\leqslant \frac{2Ne(t)t^{1/2}}{\nu\sqrt{\alpha}}.
\end{align}

Since the same holds for $I^-_{s,t}$, we obtain, that for some constant $C>0$, and all  $t<\tau$
\begin{align}
|v_1(t,x)-v_2(t,x)|
&\leqslant (t+ Ct^{1/2})e(t)\leqslant (\tau+ C\tau^{1/2})e(\tau).
\end{align}
Then $e(\tau)\leqslant (\tau+ C\tau^{1/2})e(\tau)$, and taking $\tau$ small enough, we obtain $e(\tau)=0$ thus $v_1=v_2$ on $[0,\tau]\times \bbT^d$.
\end{proof}

\begin{remark}\label{rem:monotone}\textbf{Maximal and minimal solutions.} Another approach to existence of a solution to Equation \eqref{eq:limitePDE} is to use a monotone construction of solutions, which arises from a comparison principle close to the one developed in Proposition \ref{prop:subsuper_sol_G}. This was done initially in \cite{Carl89} and also in \cite{deguchi}  Define $\overline{h_\infty}$ the right continuous version of $h_\infty$ (Equation \ref{eq:hinfty}) by
\begin{align}
\overline{h_\infty}(q)&:=
-\mathbbm{1}_{q< -2\rho}+q\mathbbm{1}_{-2\rho\leqslant q< 2\rho}+\mathbbm{1}_{q\geqslant2\rho}.
\end{align}
Note that $h_\infty$ and $\overline{h_\infty}$ are non decreasing (recall that $2\rho\in[0,1]$). 
Recall that $(S_t)$ is the semigroup on $L^1(\mathbb{T}^d)$ associated to $-2\alpha\Delta $, we denote
\begin{align}
\underline{F}(v)(t,x):=e^{-(2\beta+1)t}S_t v_0(x)+\int_0^t e^{-(2\beta+1)(t-s)}S_{t-s}[h_\infty(v(s,\cdot))](x)\dd s\\
\overline{F}(v)(t,x):=e^{-(2\beta+1)t}S_t v_0(x)+\int_0^t e^{-(2\beta+1)(t-s)}S_{t-s}[\overline{h_\infty}(v(s,\cdot))](x)\dd s.
\end{align}
Then, fixed points of the  maps above are mild solutions of these two formulations of our subdifferential inclusion:
\begin{align}
\partial_tv(t,x)-2\alpha\Delta v(t,x)+(2\beta +1) v(t,x)&=h_\infty(v(t,x))\\
\partial_tv(t,x)-2\alpha\Delta v(t,x)+(2\beta +1) v(t,x)&=\overline{h_\infty}(v(t,x))
\end{align}
Since  $h_\infty$ (resp. $\overline{h_\infty}$) is non decreasing and that $h_\infty(p)\leqslant \overline{h_\infty}(p)$ for all $p\in[-1,1]$, we have that, for $u,v$ two functions such that $-1\leqslant v\leqslant u\leqslant 1$,
\begin{enumerate}
\item $\underline{F}(v)(t,x)\leqslant \underline{F}(u)(t,x)$
\item $\overline{F}(v)(t,x)\leqslant \overline{F}(u)(t,x)$
\item $\underline{F}(u)(t,x)\leqslant \overline{F}(u)(t,x)$
\end{enumerate}
We define the sequences $(V^n)_n$ and $(W^n)_n$ of functions on $\mathbb{R}^+\times\mathbb{T}^d$:
$V^0(t,x)=1$, $W^0(t,x)=-1$ and for all $n\geqslant 1$
\begin{align}
V^n(t,x)&=e^{-(2\beta+1)t}S_t v_0(x)+\int_0^t e^{-(2\beta+1)(t-s)}S_{t-s}[\overline{f}(V^{n-1}(s,\cdot))](x)\dd s\\
W^n(t,x)&=e^{-(2\beta+1)t}S_t v_0(x)+\int_0^t e^{-(2\beta+1)(t-s)}S_{t-s}[\underline{f}(W^{n-1}(s,\cdot))](x)\dd s.
\end{align}
Thus, for $-1\leqslant v_0(x)\leqslant 1$, we can prove by induction that the sequences $(V^n)$ and $(W^n)$ satisfy, for all $n$
\begin{equation}
-1\leqslant W^1\leqslant W^2\cdots \leqslant W^n\leqslant V^n\leqslant \cdots \leqslant V^2\leqslant V^1\leqslant 1.
\end{equation}
By  a compactness and monotony argument, one can prove that $(W^n)$ and $(V^n)$ converge to functions $w$ and $w$ which are mild solutions of the subdifferential inclusion. These are the minimal and maximal solutions of the subdifferential inclusion, in the sense that any other solution (Definition \ref{def:subdiff}) must be bounded below by $w$ and above by $v$. Uniqueness follows if one can prove that $w=v$ and is proved usually (e.g.  in \cite{deguchi}) along the lines of Proposition \ref{prop:v_t-v_0}.
\end{remark}

\section{Convergence of the discrete PDE}\label{sec:conv_discrPDE}
%


In analogy with the continuous setting, we define $v^N=2u^N-1$ where $u^N$ is the solution of the discretized Equation \eqref{eq:discretPDE} and $H(i,v^N)=2G(i,\frac{v^N+1}2)+v^N(i)$. 

In such a way \eqref{eq:discretPDE} becomes
\begin{equation}\label{eq:discretPDE-H}
\begin{cases}\partial_tv^N(t,i)=2\alpha N^2 \Delta u^N(t,i)-(2\beta+1)v^N(t,i)+H(i,v^N)\\
v^N(0,i)=2u^N_0(i)-1,\end{cases}
\end{equation}
The main goal of this section is to prove the following result which states the convergence of $v^N$.

\begin{theorem}\label{thm:discrconv}
Let $v^N$ be the solution of \eqref{eq:discretPDE-H}. Then $(v^N)$ is pre-compact for the uniform convergence on each compact sets of $\bbT^d\times ]0, +\infty[$ and any accumulation points  $v_\infty$ is a solution of \eqref{eq:limitePDE-3}.
In particular, whenever the solution of \eqref{eq:discretPDE-H} is a.e. unique, $v_\infty$ is also (the) mild solution of \eqref{eq:limitePDE} and the whole sequence $v^N$ converges to $v_\infty$, uniformly on all compact sets of $\bbT^d\times ]0, +\infty[$.
\end{theorem}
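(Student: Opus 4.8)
The plan is to establish pre-compactness of $(v^N)$ by Arzel\`a--Ascoli, to identify every accumulation point as a mild solution of \eqref{eq:limitePDE-3} by passing to the limit in the Duhamel representation of \eqref{eq:discretPDE-H}, and to deduce convergence of the whole sequence from uniqueness. Throughout we work in the mesoscopic regime $K_N\to\infty$, which is the one relevant to \eqref{eq:limitePDE-3}.

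\textbf{Pre-compactness.} By Proposition \ref{prop:u^Nin01}, read in the variable $v=2u-1$, we have $\|v^N\|_\infty\le 1$ for every $N$. Duhamel's formula for \eqref{eq:discretPDE-H} reads
\begin{equation}\label{eq:duhamelvN}
v^N(t,i)=e^{-(2\beta+1)t}\sum_{j\in\Tn}p_N(t,i,j)\,v^N_0(j)+\int_0^t e^{-(2\beta+1)(t-s)}\sum_{j\in\Tn}p_N(t-s,i,j)\,H(j,v^N(s))\,\dd s,
\end{equation}
with $p_N$ the heat kernel of the discrete Laplacian on $\Tn$ sped up by $2\alpha N^2$, as in Section \ref{sec:estimatesolu^N}. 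Since $v^N\in[-1,1]$ and $g_{K_N}$ (hence $G$) is bounded, $H(\cdot,v^N)$ is bounded uniformly in $N$, so the drift integral is $O(t-s)$ in $L^\infty$. Combining Proposition \ref{prop:gradient_est} — which makes the piecewise-linear interpolation of $v^N(t,\cdot)$ equi-Lipschitz on $\bbT^d$, with constant depending only on $\tau$ for $t\le\tau$ — with the gradient bound \eqref{eq:nablap_bound} on $p_N$ for the $v_0$-term and the $L^1$ estimate $\sum_j|\partial_a p_N(a,i,j)|\le C/a$ for the drift term, one obtains, exactly as in Proposition \ref{prop:mildprop}, a modulus of continuity in time of the form $C_\tau\big(h+h|\log h|\big)$, uniform in $N$, on $[1/\tau,\tau]\times\bbT^d$. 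Arzel\`a--Ascoli and a diagonal extraction then give pre-compactness of $(v^N)$ in $C(\bbT^d\times\,]0,+\infty[)$ for locally uniform convergence, and every accumulation point $v_\infty$ satisfies $\|v_\infty\|_\infty\le 1$.

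\textbf{Identification of the limit.} Fix a subsequence $N_k$ with $v^{N_k}\to v_\infty$ locally uniformly. For the linear part of \eqref{eq:duhamelvN}, the local central limit theorem for the sped-up discrete random walk, together with Scheff\'e's lemma, shows that for $i_{N_k}/N_k\to x$ and $a>0$ the step-function extensions of the densities $N_k^d\,p_{N_k}(a,i_{N_k},\cdot)$ converge in $L^1(\bbT^d)$ to $s(4\alpha a,x,\cdot)$; since moreover $v^N_0\to v_0$ by Assumption \ref{ass2}(4), this gives $e^{-(2\beta+1)t}\sum_j p_{N_k}(t,i_{N_k},j)v^{N_k}_0(j)\to S^{2\beta+1,4\alpha}_tv_0(x)$. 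For the nonlinear part, write $H(j,v^N(s))=2G(j,u^N(s))+v^N(s,j)$ and first perform the local replacement of $G(j,u^N(s))$ by $g_{K_N}(u^N(s,j))$: under $\gu_{u^N(s)}$ the Bernoulli parameters over $j+\cV_N$ differ from $u^N(s,j)$ by at most $O(\ell_\cV\|\nabla u^N(s)\|_\infty)$, so a coupling yields
\begin{equation}
\sup_{s\le\tau,\ j\in\Tn}\big|G(j,u^N(s))-g_{K_N}(u^N(s,j))\big|\le C\,K_N\ell_\cV/N\longrightarrow 0
\end{equation}
by Assumption \ref{ass1}. After this replacement $H(j,v^N(s))$ is, up to an error vanishing uniformly, equal to $h_{K_N}(v^N(s,j))$, and one may now argue verbatim as in the proof of Proposition \ref{prop:existence_sol_PDE}: since $\kappa_N/K_N\to\min(T,1-T)$, the law of large numbers gives $h_{K_N}(v^N(s,j))\to h_\infty(v_\infty(s,x))$ whenever $v_\infty(s,x)\notin\{-2\rho,2\rho\}$, while on these level sets Lemma \ref{lemma:h_Kbounded} squeezes the quantity between $h_\infty(v_\infty(s,x))$ and $h_\infty(v_\infty(s,x)^+)$. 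Viewed as uniformly bounded step functions on $]0,\tau[\times\bbT^d$, the $H(\cdot,v^{N_k})$ are weakly compact; extracting a further subsequence with weak limit $w$, the Fatou argument of Proposition \ref{prop:existence_sol_PDE} yields $h_\infty(v_\infty)\le w\le h_\infty(v_\infty^+)$, i.e. $w(t,x)\in\partial H_\infty(v_\infty(t,x))$ a.e. Finally, since $e^{-(2\beta+1)(t-s)}N_k^d\,p_{N_k}(t-s,i_{N_k},\cdot)\to e^{-(2\beta+1)(t-s)}s(4\alpha(t-s),x,\cdot)$ in $L^1$, uniformly for $s$ in compacts of $[0,t)$ (the contribution of $s$ near $t$ being $O(\|H\|_\infty)$ times the width of the removed strip), one passes to the limit in \eqref{eq:duhamelvN} to get
\begin{equation}
v_\infty(t,x)=S^{2\beta+1,4\alpha}_tv_0(x)+\int_0^t S^{2\beta+1,4\alpha}_{t-s}[w(s,\cdot)](x)\,\dd s,
\end{equation}
so $v_\infty$ is a mild solution of \eqref{eq:limitePDE-3} in the sense of Definition \ref{def:subdiff}.

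\textbf{Uniqueness, the last claim, and the main difficulty.} If \eqref{eq:limitePDE-3} has an a.e.-unique mild solution on some $[0,\tau]$ — for instance under the hypotheses of Proposition \ref{prop:v_t-v_0}, which in the variable $u$ are precisely the regularity assumptions on $u_0$ in Theorem \ref{thm:convergence} — then all accumulation points of $(v^N)$ coincide with it, and by pre-compactness the whole sequence converges to it, uniformly on compact subsets of $\bbT^d\times\,]0,+\infty[$; this limit is simultaneously the mild solution of \eqref{eq:limitePDE}. I expect the delicate point to be the nonlinear step: one must control the local replacement $G(j,u^N)\to g_{K_N}(u^N_j)$ uniformly in $N$ — this is where the sub-logarithmic growth of $\ell_\cV$ in Assumption \ref{ass1} re-enters, now for a reason unrelated to the entropy estimate — and then reproduce the weak-compactness/Fatou squeeze of Proposition \ref{prop:existence_sol_PDE} with the additional layer that the discrete nonlinearity is an average over a growing neighbourhood rather than a pointwise evaluation. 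The discrete-to-continuous heat-kernel convergence and the equicontinuity estimates, by contrast, are standard.
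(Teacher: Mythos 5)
Your argument is correct and shares the paper's overall skeleton: mild (Duhamel) formulation of \eqref{eq:discretPDE-H}, uniform bounds on $H$, Arzel\`a--Ascoli with a diagonal extraction, local CLT convergence of the discrete kernels, the weak-compactness/Fatou squeeze recycled from Proposition \ref{prop:existence_sol_PDE}, and uniqueness to upgrade subsequential to full convergence. Two technical steps, however, are genuinely different from the paper's. First, the paper never establishes equicontinuity of $v^N$ at the discrete level: it introduces the auxiliary function $\widetilde v^N$, obtained by applying the \emph{continuous} semigroup $S^{2\beta+1,4\alpha}$ to $v_0$ and to the discrete nonlinearity $H(\cdot,v^N)$, shows $\|\widetilde v^N-v^N\|_{[1/\tau,\tau]\times\bbT^d}\to 0$ (Lemma \ref{Lemma_vtildev}, via the local CLT plus a Gaussian tail bound), and then gets the modulus of continuity for free from Proposition \ref{prop:mildprop}, pre-compactness of $(v^N)$ being inherited. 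You instead work directly with the discrete Duhamel formula, which makes you invoke an $\ell^1$ time-regularity bound $\sum_j|\partial_a p_N(a,i,j)|\leqslant C/a$ for the discrete kernel; this is true and derivable from what the paper states (e.g. by Chapman--Kolmogorov, writing $N^2\Delta p_N(a)$ as a convolution of two discrete gradients of $p_N(a/2)$ and using \eqref{eq:nablap_bound} twice), but it is an extra estimate the paper's decomposition avoids, so you should spell it out rather than assert it. Second, for the nonlocal-to-local replacement the paper relies on Proposition \ref{prop:conv_rho} and Corollary \ref{cor:conv_rho} (Chebyshev concentration of $\rho_0$ under a product Bernoulli measure with slowly varying parameters, giving $G(j,u^N)\to g_\infty$ at continuity points of the limit), whereas you first freeze the parameters by a coupling with explicit error $CK_N\ell_\cV/N$ (legitimate by Proposition \ref{prop:gradient_est} and Assumption \ref{ass1}) and then use $h_{K_N}\to h_\infty$ via Proposition \ref{prop:g_K1} together with the squeeze of Lemma \ref{lemma:h_Kbounded}. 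Your two-step replacement is, if anything, more explicit than the paper's ``adapt the argument, details omitted''; the price is the additional discrete heat-kernel estimate in the compactness step. Both routes are sound and lead to the same identification $w\in\partial H_\infty(v_\infty)$ a.e.\ and hence to the stated conclusion.
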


To prove Theorem \ref{thm:discrconv} we need some technical results. 
Let consider the semigroup of the discrete Laplacian $\frac{1}{2}N^2\Delta^N$ on $\Tn$ and $\mathbb Z^d$, denoted by $s^N(t,i,j)$ and $s_0^N(\gamma t,i,j)$ respectively. In particular we have that $s^N(t,i,j)=p_N(t, i,j)$, where $p_{N}(t,i,j)$ is heat kernel of discrete Laplacian on the discrete torus, cf. \eqref{def:discrLaplacian}.

For any $\lambda, \gamma\ge 0$ and $f:\frac1N \Tn \to \mathbb R$, we let $(S^{N, \lambda, \gamma}_t)$ be the semigroup defined by
\begin{equation}\label{eq:defS^N}
S^{N, \lambda, \gamma}_t f(x)=\sum_{y\in \frac1N \Tn}e^{-\lambda t}s^N(\gamma t,Nx,Ny)f(y)
=
\sum_{y\in \frac1N \mathbb Z^d}e^{-\lambda t}s_0^N(\gamma t,Nx,Ny)\tilde f(y)
\end{equation}
where, as in \eqref{lapl_semigroup}, $\tilde f$ is the periodic extension of $f$ on $\frac1N \mathbb Z^d$.

\smallskip

In the remaining part of this article we will consider $S^{N, \lambda, \gamma}_t f(x)$ with $f\in \mathcal C(\bbT^d)$. In that case we mean that the function $f$ is restricted on $\frac1N\Tn \subset \bbT^d$, which is equivalent to consider $f^N(x):=f(\lfloor Nx \rfloor /N )$. We observe that if $f$ is also Lipschitz, then $\|f-f^N\|_{\bbT^d}\le \frac{c}{N^d}$ for some $c>0$ and $\|f^N\|_{\bbT^d}\le \|f\|_{\bbT^d}$. Then, with the same extension to $\bbT^d$ for $s^N$ we can write, for any $x\in \bbT^d$
\begin{equation}\label{eq:defS^N_2}
S^{N, \lambda, \gamma}_t f(x)=\int_{\bbT^d}e^{-\lambda t}N^d s^N(\gamma t,Nx,Ny)f^N(y)\dd y.
\end{equation}

By a slight abuse of notation, we still denote by $v^N$ the linear interpolation on $\bbT^d$ such that $v^N(t,\frac{i}N)=v^N_i(t)$. We also redefine the function $H$ on the torus $\bbT^d$ by the linear interpolation such that $H(\frac{i}N,v^N)=H(i,v^N)$ and we define $H^N$ as $f^N$ in \eqref{eq:defS^N_2}.

\begin{definition}
Let $N\in \mathbb N$ and $v_0^N\in L^\infty(\bbT^d)$. We say that $(v^N(t,x), t\ge 0, x\in\bbT^d)$ is a mild solution of
 \eqref{eq:discretPDE-H} if
\begin{itemize}
\item  for any $N$, $v^N$ is continuous from $\mathbb{R}_+^*$ to $L^{\infty}(\bbT^d)$,
\item for all $t>0$ and $x\in\bbT^d$ 
\begin{equation}\label{def:mild_sol_dis}
v^N(t,x)=S^{N,2\beta+1, 4N^2\alpha}_t (v_0^N)(x)+\int_0^t S^{N,2\beta+1, 4N^2\alpha}_{t-s}\Big[H^N(\cdot,v^N(s,\cdot))\Big](x)\dd s\,.
\end{equation}
\end{itemize}
\end{definition}
Let $u^N$ be the unique solution of \eqref{eq:discretPDE}, so that $v^N=2u^N-1$ satisfies \eqref{eq:discretPDE-H}. Of course, for any $N$ the solution $v^N$ of \eqref{def:mild_sol_dis} exists and it is unique. 

The proof of Theorem \ref{thm:discrconv} is based on the representation of $v^N$ as in \eqref{def:mild_sol_dis}. We define $\widetilde v^N$ as a slight modification of \eqref{def:mild_sol_dis}, that is,
\begin{equation}
\widetilde v^N(t,x):=S^{2\beta+1, 4\alpha}_t (v_0)(x)+\int_0^t S^{2\beta+1, 4\alpha}_{t-s}\Big[H(\cdot,v^N(s,\cdot))\Big](x)\dd s\, .
\end{equation}
\begin{lemma}\label{Lemma_vtildev}
For any $\tau>0$
\begin{equation}\label{eq:dist_b_v}
\lim_{N\to +\infty}\|\widetilde v^N-v^N\|_{[1/\tau,\tau]\times\bbT^d}=0.
\end{equation}
\end{lemma}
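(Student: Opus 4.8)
The plan is to compare the two Duhamel representations term by term; they differ only in three harmless ways. The free term carries the limiting datum $v_0$ instead of $v^N_0$; the propagator is the continuous semigroup $S^{2\beta+1,4\alpha}$ instead of the discrete one $S^{N,2\beta+1,4N^2\alpha}$; and the source is the piecewise-linear interpolation $H(\cdot,v^N(s,\cdot))$ instead of the piecewise-constant interpolation $H^N(\cdot,v^N(s,\cdot))$ of the \emph{same} lattice field $\big(H(i,v^N(s))\big)_{i\in\Tn}$. Crucially the nonlinearity is evaluated along the same function $v^N(s,\cdot)$ in both $v^N$ and $\widetilde v^N$, so no nonlinear (contraction) estimate is needed: the statement is purely a comparison of two linear inhomogeneous evolutions. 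I would write $\widetilde v^N-v^N=A^N+B^N$, where $A^N(t,x)$ is the difference of the two free terms and $B^N(t,x)=\int_0^t(\cdots)\,\dd s$ is the difference of the two Duhamel integrals.

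The one analytic ingredient is a local limit theorem for the constant-coefficient discrete Laplacian — of exactly the kind already used in the proof of Proposition \ref{prop:gradient_est} via \cite{DD05,Del99}: the piecewise-constant extension to $\bbT^d$ of $N^d s^N(4N^2\alpha\,\sigma,Nx,N\,\cdot)$ converges to $s(4\alpha\,\sigma,x,\cdot)$, uniformly in $x\in\bbT^d$ and uniformly for $\sigma$ in compact subsets of $(0,\infty)$; on the compact torus this $L^\infty$ convergence is simultaneously $L^1$ convergence. I would also record three elementary facts: $\|v^N\|_\infty\le 1$ (by Proposition \ref{prop:u^Nin01}, whence $v^N_0\to v_0$ uniformly by Assumption \ref{ass2}(4)); $\|H(\cdot,v^N(s))\|_\infty\le 3$ uniformly in $s$; and $\|H(\cdot,v^N(s))-H^N(\cdot,v^N(s))\|_\infty=O((\log N)/N)$ uniformly in $s\in[0,\tau]$ — indeed a unit lattice shift changes $v^N$ by $O(1/N)$ and changes $G(i,u^N)$ by at most the total-variation distance $\sum_{j\in\cV_N}|\nabla u^N_{i+j}|\le C\ell_\cV^d\,\|\nabla u^N(s)\|_\infty$ between the corresponding products of Bernoulli laws, and $\ell_\cV^d\le\delta^d\log N$ (Assumption \ref{ass1}) together with $\|\nabla u^N(s)\|_\infty\le(C_0+C\sqrt\tau)/N$ (Proposition \ref{prop:gradient_est}) yields the claim.

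For $A^N$: on $[1/\tau,\tau]$ the time $t$ is bounded below, so I would write $A^N=(S^{2\beta+1,4\alpha}_t-S^{N,2\beta+1,4N^2\alpha}_t)v_0+S^{N,2\beta+1,4N^2\alpha}_t(v_0-v^N_0)$ and bound the first term by $\|v_0\|_\infty$ times the $L^1$ kernel error and the second by $\|v_0-v^N_0\|_\infty$ (the discrete kernel having total mass $\le 1$); hence $\|A^N\|_{[1/\tau,\tau]\times\bbT^d}\to 0$. For $B^N$: fix a small $a\in(0,1/\tau)$ and split $\int_0^t=\int_0^{t-a}+\int_{t-a}^t$. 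On $\int_{t-a}^t$, both Duhamel pieces are $\le 3a$ in sup-norm (by $L^\infty$ contractivity of the two semigroups and $\|H\|_\infty\le 3$), contributing $\le 6a$ to $|B^N|$ uniformly in $N,t,x$. On $\int_0^{t-a}$, one has $t-s\in[a,\tau]$, and the same free-term/kernel splitting as for $A^N$ bounds the integrand by $3\,\varepsilon_N(a)+\|H(\cdot,v^N(s))-H^N(\cdot,v^N(s))\|_\infty$, where $\varepsilon_N(a):=\sup_{\sigma\in[a,\tau]}\sup_x\|N^d s^N(4N^2\alpha\sigma,Nx,N\cdot)-s(4\alpha\sigma,x,\cdot)\|_{L^1(\bbT^d)}\to 0$; integrating over $s$ multiplies this by at most $\tau$, and the result tends to $0$ as $N\to\infty$ for each fixed $a$. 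Hence $\limsup_N\|B^N\|_{[1/\tau,\tau]\times\bbT^d}\le 6a$ for every $a>0$, so it vanishes, and combining with the bound on $A^N$ gives \eqref{eq:dist_b_v}.

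I expect the only genuine work to be the uniform comparison of the two heat propagators at macroscopic times (the local CLT with $L^1$ control), but since the discrete Laplacian here has constant coefficients this is classical, and it is precisely the estimate already invoked earlier in the paper. The only additional care needed is the mild degeneration of the time variable as $s\uparrow t$ in the Duhamel integral, which the $a$-cutoff disposes of using nothing beyond boundedness of $H$ and $L^\infty$ contractivity of the two semigroups.
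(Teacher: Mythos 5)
Your argument is correct and takes essentially the same route as the paper's proof: the same splitting into the free-term difference plus the difference of Duhamel integrals, the same near-diagonal time cutoff exploiting the uniform bound on $H$ and $L^\infty$-contractivity, the same comparison of $H$ with $H^N$, and the same kernel comparison at macroscopic times, which the paper makes explicit via the local CLT of Lawler--Limic together with a Gaussian domination bound, while you invoke it as a classical uniform (hence $L^1$ on the torus) estimate. Your $O((\log N)/N)$ bound on $\|H(\cdot,v^N(s))-H^N(\cdot,v^N(s))\|_\infty$, derived from the gradient estimate of Proposition \ref{prop:gradient_est} and $|\cV_N|\lesssim \log N$, is a more carefully justified version of the bound the paper asserts at that step, and it serves the same purpose.
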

\begin{proof} Let $\tau>0$, then
\begin{multline}\label{int:1}
\sup_{t\in [\frac1\tau,\tau], \,x\in \bbT^d} \Big|\widetilde v^N(t,x)-v^N(t,x)\Big |\le 
\sup_{t\in [\frac1\tau,\tau], \,x\in \bbT^d} \Big|S^{N, \lambda, N^2\gamma}_t v_0^N(x)-S^{\lambda, \gamma}_t v_0(x)\Big|\\ +
\sup_{t\in [\frac1\tau,\tau], \,x\in \bbT^d} 
\Big|\int_0^t \Big\{S^{N,2\beta+1, 4N^2\alpha}_{t-s}\Big[H^N(\cdot,v^N(s,\cdot))\Big](x)- S^{2\beta+1, 4\alpha}_{t-s}\Big[H(\cdot,v^N(s,\cdot))\Big](x) \Big\}\dd s\Big|.
\end{multline}
We show that the right hand side of \eqref{int:1} converges to $0$. We detail the convergence of the second term, which is more delicate. The argument can be adapted to the first term by using Assumption \ref{ass2} which ensures that $v_0^N$ converges to $v_0$ in $C(\bbT^d)$.

We fix $\epsilon\in(0, \frac1\tau)$ and we get that 
\begin{multline}\label{int:2}
\Big|\int_0^t S^{N,2\beta+1, 4N^2\alpha}_{t-s}\Big[H^N(\cdot,v^N(s,\cdot))\Big](x)- S^{2\beta+1, 4\alpha}_{t-s}\Big[H(\cdot,v^N(s,\cdot))\Big](x)\dd s\Big|\\ \le
\Big|\int_0^{t-\epsilon} S^{N,2\beta+1, 4N^2\alpha}_{t-s}\Big[H^N(\cdot,v^N(s,\cdot))\Big](x)- S^{2\beta+1, 4\alpha}_{t-s}\Big[H(\cdot,v^N(s,\cdot))\Big](x)\dd s\Big| + C\epsilon \,,
\end{multline}
where we used that Lemma \ref{lemma:bound:H} which implies that $H(\cdot,v^N)$ is bounded by $1$ uniformly on $N$. 
The integral on the right hand side of \eqref{int:2} is bounded from above by 
\begin{align*}
&\begin{aligned}\int_0^{t-\epsilon} e^{-(2\beta+1)(t-s)} \Big|\int_{\bbT^d}(N^d s^N(4N^2\alpha (t-s),&Nx,Ny)\\&-s(4\alpha (t-s),x,y))H^N(y,v^N(s,y))\dd y\Big|\dd s\end{aligned}\\
&+\int_0^{t-\epsilon} e^{-(2\beta+1) (t-s)}\int_{\bbT^d} s(4\alpha (t-s),x,y)\Big|H^N(y,v^N(s,y))-H(y,v^N(s,y))\Big|\,\dd y \dd s.
\end{align*}
Since $\sup_{s\ge 0, \, y\in \bbT^d}\Big|H^N(y,v^N(s,y))-H(y,v^N(s,y))\Big| \le \frac1N$, the second integral is smaller than $\frac{c_{\alpha,\beta}}{N}$. 
For the first integral, we first use that $H(\cdot,v^N)$ is bounded by $1$ uniformly on $N$ and then we operate the change of variable $u=t-s$ which gives that it is bounded from above by 
\begin{align}\label{int:last}
\int_\epsilon^t e^{-(2\beta+1)u} \int_{\bbT^d} \Big|\,N^d s^N(4N^2\alpha u,Nx,Ny)-s(4\alpha u,x,y)\Big| \,\dd y\,\dd u.
\end{align}
We now use the local central limit theorem (cf. Theorem 2.1.1 and (2.5) in \cite{lawler2010}):
let $\rho$ be the Gaussian Kernel and $\rho(u,x,y)=\frac{1}{u^{d/2}}\rho\Big(\frac{x-y}{u^{1/2}}\Big)$, then 
\begin{equation} \label{Gnedenko}
\psi_{\epsilon,\tau,N}:=\sup_{u\in [\epsilon,\tau], \, y\in \frac1N \mathbb Z^d}\Big|N^d p(u N^2, N x, Ny)  - \rho(u,x,y)\Big|\xrightarrow[]{N\to +\infty}0.
\end{equation} 
We also observe that by symmetry the supremium in \eqref{Gnedenko} is independent of $x$.
Moreover, by Proposition 2.4.6 in \cite{lawler2010} we have that there exist $c_1,c_2>0$ independent of $x,y,u$ such that 
\begin{equation} \label{LD_ub}
\Big|N^d p(uN^2, N x, Ny)  - \rho(u,x,y)\Big|\le \frac{c_1}{u^{\frac{d}{2}}}e^{-c_2 \frac{\|x-y\|^2}{u}}.
\end{equation}
In such a way, for any $M>0$ fixed we write $\bbT^d=B_x(M)\cup B_x(M)^c$ and we get that \eqref{int:last} is smaller than 
\begin{equation}
c_d \int_\epsilon^t e^{-(2\beta+1)u} \Big( M^d \psi_{N, \epsilon,\tau}+\frac{c_1}{M^d u^{\frac{d}{2}}}e^{-c_2 \frac{M^{2d}}{u}}\Big)\, \dd s\le 
C_d\Big( M^d \psi_{N, \epsilon,\tau}+\frac{1}{M^d}\Big),
\end{equation}
where $c_d,C_d>0$ are two positive constants that depend only on the dimension $d$.

We conclude that the right hand side of \eqref{int:2} is bounded by 
$\frac{c_{\alpha,\beta}}{N}+cM^d \psi_{N, \epsilon,\tau}+\frac{c}{M^d} +C\epsilon$,
uniformly on $x\in \bbT^d$ and $t\in [\frac{1}{\tau},\tau]$. Therefore, by taking the limit on $N\to +\infty$ and then on $M\to +\infty$ and $\epsilon\to 0$ we conclude the proof.
\end{proof}

\begin{proof}[Proof of Theorem \ref{thm:discrconv}]
 We control $\widetilde v^N$ to get the convergence of $v^N$. We observe that since $H(\cdot,v^N(s,\cdot))$ is uniformly bounded so that by Proposition \ref{prop:mildprop} $\widetilde v^N$ is uniformly bounded in $N$, uniformly continuous on $[1/\tau, \tau]\times\bbT^d$, and the modulus of continuity only depends on $\tau>1$. By the Ascoli-Arzela Theorem, the sequence $(\widetilde v^N)_N$ is pre-compact on $C([1/\tau,\tau]\times \bbR^d)$ and therefore, by Lemma \eqref{Lemma_vtildev}, $(v^N)$ also. By a diagonal argument, we can extract from $(v^N)$ a subsequence converging uniformly to a limit $v_\infty$ in $C(]0,\infty[\times \bbR^d)$, uniformly on each compact. 
%

Using Corollary \ref{cor:conv_rho}, we can adapt the argument used in the proof of Proposition \ref{prop:existence_sol_PDE} (\ref{eq:lim_inf_sup_h}--\ref{eq:lim_inf_sup_w}) to get that each accumulation point $v_\infty$ is a mild solution of \eqref{eq:limitePDE-3}, we omit the details. 
\end{proof}

\subsection{Proof of Theorem \ref{thm:convergence}}

Theorem \ref{thm:convergence} is now a consequence of Theorems \ref{thm:intermediateGoal} and \ref{thm:discrconv}.

\appendix 


\section{Concentration inequalities}
We follow the definitions in Jara and Menezes \cite{JM20} and \cite{JM18} and Boucheron, Lugosi, Massart \cite{BLM16}, Section 2.3. We omit the proofs since there are present in the references.
\begin{definition}[\cite{JM20} and  \cite{BLM16}, Section 2.3] 
Let $X$ be a real random variable. $X$ is said to be sub-Gaussian with variance parameter $\sigma^2$ if, for all $t\in\bbR$
\begin{equation}
\psi_{X}(t):=\log \mathbb{E}(\exp(tX))\leqslant \sigma^2\frac{t^2}2.
\end{equation}
We denote $\cG(\sigma^2)$ the set of real sub-Gaussian random variables with variance parameter $\sigma^2$.
\end{definition}

\begin{proposition}\label{prop:subgauss}[\cite{BLM16} and \cite{JM18}, Proposition F.7]
The following statements are equivalent:
\begin{enumerate}
\item $X\in \cG(\sigma^2)$
\item For any $t>0$, $\mathbb{P}(|X|>t)\leqslant 2\exp(-\frac{t^2}{2\sigma^2})$
\item $\mathbb{E}(\exp(\gamma X^2))\leqslant 3$ for all $0\leqslant \gamma< \frac1{4\sigma^2}$.
\end{enumerate}
\end{proposition}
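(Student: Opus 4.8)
The plan is to prove the equivalence via the cyclic chain $(1)\Rightarrow(2)\Rightarrow(3)\Rightarrow(1)$, using only Markov's inequality, the layer-cake formula, and an elementary pointwise inequality between $e^{tx}$ and $e^{\gamma x^2}$. I expect the first two arrows to preserve the parameter $\sigma^2$ exactly, whereas the last arrow will recover $(1)$ only with $\sigma^2$ enlarged by an absolute factor (and, strictly, after centering $X$), which is the sense in which these three conditions are ``equivalent'' and the reason the constants in \cite{BLM16,JM18,JM20} vary.

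For $(1)\Rightarrow(2)$ I would apply the Chernoff--Cram\'er bound: for $a>0$ and $t>0$, $\mathbb{P}(X\ge a)\le e^{-ta}\,\mathbb{E}(e^{tX})\le\exp\!\big(-ta+\sigma^2 t^2/2\big)$, and optimizing the exponent at $t=a/\sigma^2$ gives $\mathbb{P}(X\ge a)\le e^{-a^2/(2\sigma^2)}$. Because the hypothesis $\psi_X(t)\le\sigma^2 t^2/2$ is invariant under $X\mapsto-X$, the same estimate holds for $\mathbb{P}(-X\ge a)$, and a union bound yields $(2)$.

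For $(2)\Rightarrow(3)$ I would write, using $\gamma\ge0$ and the layer-cake formula, $\mathbb{E}(e^{\gamma X^2})=1+\int_1^\infty\mathbb{P}\big(|X|\ge\sqrt{(\log s)/\gamma}\big)\,\dd s$, substitute $v=\log s$, and insert $(2)$ to bound this by $1+2\int_0^\infty e^{-v(1/(2\gamma\sigma^2)-1)}\,\dd v$. The integral converges exactly when $\gamma<1/(2\sigma^2)$, with value $2\gamma\sigma^2/(1-2\gamma\sigma^2)$; so $\mathbb{E}(e^{\gamma X^2})\le 1+4\gamma\sigma^2/(1-2\gamma\sigma^2)$, and for $\gamma\le1/(4\sigma^2)$ one has $1-2\gamma\sigma^2\ge1/2$ and $4\gamma\sigma^2\le1$, whence the bound is $\le3$. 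This computation also explains why the threshold appearing in $(3)$ is precisely $1/(4\sigma^2)$.

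The step I expect to be the real obstacle is $(3)\Rightarrow(1)$. The natural route is to complete the square, $tX\le\gamma X^2+t^2/(4\gamma)$ for every $\gamma>0$, giving $\mathbb{E}(e^{tX})\le e^{t^2/(4\gamma)}\mathbb{E}(e^{\gamma X^2})\le3\,e^{t^2/(4\gamma)}$ for all $0\le\gamma<1/(4\sigma^2)$, and then letting $\gamma\uparrow1/(4\sigma^2)$ to obtain $\psi_X(t)\le\log3+\sigma^2 t^2$. The difficulty is that one cannot keep the sharp prefactor $1/2$ nor discard the additive $\log3$: removing it requires centering $X$ (replace $X$ by $X-X'$ with $X'$ an independent copy, or subtract $\mathbb{E}X$, both legitimate since $(3)$ forces all moments to be finite) and then absorbing the residual constant into an enlarged variance parameter via a standard truncation argument. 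Once this is done the chain closes and the three conditions are equivalent in the stated (constant-dependent) sense.
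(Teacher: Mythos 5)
The paper itself gives no proof of this proposition: it states just before the appendix that the proofs are omitted because they appear in \cite{BLM16} and \cite{JM18}, so there is no argument of the authors to compare against. Judged on its own, your chain $(1)\Rightarrow(2)\Rightarrow(3)$ is complete and correct: the Chernoff optimization at $t=a/\sigma^2$ plus the $t\mapsto -t$ symmetry gives $(2)$ with the same $\sigma^2$, and your layer-cake computation $\mathbb{E}(e^{\gamma X^2})\le 1+4\gamma\sigma^2/(1-2\gamma\sigma^2)\le 3$ for $\gamma\le 1/(4\sigma^2)$ is exactly the calculation that explains the constants $3$ and $1/(4\sigma^2)$. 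These are also the only implications the paper actually uses (it invokes the proposition to pass from sub-Gaussianity of $h_i^k$ to $\mathbb{E}(e^{\gamma (h_i^k)^2})\le 3$), so for the purposes of the paper your argument is sufficient.

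The genuine gap is the closing step $(3)\Rightarrow(1)$, which you only sketch. As you yourself observe, the literal implication is false: $(3)$ does not force $\mathbb{E}X=0$ (take $X\equiv 0.1$, $\sigma^2=1$), whereas $(1)$ does, so no enlargement of $\sigma^2$ can repair it without centering; and even for centered $X$ your square-completion argument only yields $\psi_X(t)\le \log 3+\sigma^2 t^2$, and the removal of the additive $\log 3$ is deferred to ``a standard truncation argument'' that is never carried out. To make the chain close you need to do this explicitly, e.g.\ split on $|t|$: for $|t|\ge \sqrt{\log 3}/\sigma$ the additive constant is absorbed directly, giving $\psi_X(t)\le 2\sigma^2 t^2$, while for small $|t|$ one uses that $(3)$ bounds all moments (via $\gamma^k x^{2k}/k!\le e^{\gamma x^2}$) together with $\mathbb{E}X=0$ and a series expansion of the moment generating function to get $\psi_X(t)\le c\,\sigma^2 t^2$ with a universal $c$; this is the moment-comparison argument of \cite{BLM16}. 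Without that paragraph (and without saying explicitly that the proposition is to be read as an equivalence up to universal changes of the variance parameter for centered variables), the third implication remains unproved.
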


Let us complete our family of inequalities:
\begin{lemma}\label{lem:SGprod}[\cite{JM18}, Proposition F.8]
Let $X\in \cG(\sigma_1^2)$ and $Y\in\cG(\sigma_2^2)$, then for all $0\leqslant \gamma< \frac1{4\sigma_1\sigma_2}$,
\begin{equation*}
\mathbb{E}(\exp(\gamma XY))\leqslant 3
\end{equation*}
\end{lemma}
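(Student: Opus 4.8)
The plan is to linearise the bilinear quantity $XY$ into a combination of the squares $X^2$ and $Y^2$, so that the quadratic exponential moment bound of Proposition \ref{prop:subgauss}(3) can be applied to each factor separately. If $\sigma_1\sigma_2=0$ the statement is trivial: a variable in $\cG(0)$ has mean $0$ (by Jensen) and variance $0$, hence is a.s.\ equal to $0$, so $XY\equiv 0$ and $\mathbb{E}(\exp(\gamma XY))=1\le 3$; thus I assume $\sigma_1,\sigma_2>0$ in what follows.

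First I would record the elementary pointwise inequality $xy\le \tfrac12\big(a\,x^2+\tfrac1a\,y^2\big)$, valid for all $x,y\in\bbR$ and all $a>0$, which is simply $\big(\sqrt a\,x-y/\sqrt a\big)^2\ge 0$. Since $\gamma\ge 0$, multiplying by $\gamma$ and exponentiating gives the pointwise bound
\[
\exp(\gamma XY)\le \exp\!\Big(\tfrac{\gamma a}{2}X^2\Big)\exp\!\Big(\tfrac{\gamma}{2a}Y^2\Big),
\]
and hence, by the Cauchy--Schwarz inequality,
\[
\mathbb{E}\big(\exp(\gamma XY)\big)\le \mathbb{E}\big(\exp(\gamma a\,X^2)\big)^{1/2}\;\mathbb{E}\big(\exp(\tfrac{\gamma}{a}\,Y^2)\big)^{1/2}.
\]

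Next I would choose the balanced value $a=\sigma_2/\sigma_1$, so that $\gamma a=\gamma\sigma_2/\sigma_1$ and $\gamma/a=\gamma\sigma_1/\sigma_2$. The hypothesis $\gamma<\tfrac1{4\sigma_1\sigma_2}$ then gives exactly $\gamma a<\tfrac1{4\sigma_1^2}$ and $\gamma/a<\tfrac1{4\sigma_2^2}$, so Proposition \ref{prop:subgauss}(3) applies to $X$ with parameter $\gamma a$ and to $Y$ with parameter $\gamma/a$, yielding $\mathbb{E}(\exp(\gamma a X^2))\le 3$ and $\mathbb{E}(\exp(\tfrac{\gamma}{a}Y^2))\le 3$. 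Substituting these into the Cauchy--Schwarz bound gives $\mathbb{E}(\exp(\gamma XY))\le 3^{1/2}\cdot 3^{1/2}=3$, which is the claim.

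There is no genuine obstacle here; the only point requiring care is matching the constant $\tfrac14$ in Proposition \ref{prop:subgauss}(3) with the admissible range of $\gamma$, which is precisely what forces the symmetric choice $a=\sigma_2/\sigma_1$ (any other choice of $a$ would shrink the range of $\gamma$). I note that one could instead attempt a conditioning argument — conditioning on $X$ and using sub-Gaussianity of $Y$ to get $\mathbb{E}(\exp(\gamma XY)\mid X)\le \exp(\gamma^2\sigma_2^2 X^2/2)$, then invoking Proposition \ref{prop:subgauss}(3) for $X$ with parameter $\gamma^2\sigma_2^2/2<\tfrac1{4\sigma_1^2}$ — but that requires $X$ and $Y$ to be independent, which is not assumed, so I would stick with the Cauchy--Schwarz route above.
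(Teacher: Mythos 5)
Your proof is correct: the Young-type split $xy\le\tfrac12\big(a x^2+a^{-1}y^2\big)$ with the balanced choice $a=\sigma_2/\sigma_1$, followed by Cauchy--Schwarz and Proposition \ref{prop:subgauss}(3) applied to each factor, yields exactly the bound $\sqrt3\cdot\sqrt3=3$ on the stated range of $\gamma$, and your handling of the degenerate case $\sigma_1\sigma_2=0$ is fine. The paper itself omits the proof, deferring to \cite{JM18}, and your argument is essentially the standard one used there (note in particular that it correctly avoids any independence assumption on $X$ and $Y$), so there is nothing to add.
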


\begin{lemma}\label{lem:SGsum}[\cite{JM18}, Proposition F.12]
Let $X_1,\dots, X_n$ be random variables with $X_i\in\cG(\sigma_i^2)$, such that there is a partition of $K$ subsets $P_1,\dots,P_K$ of $\{1, \ldots,n\}$ each containing $L$ variables that $\Sigma_k=\sigma(X_i,i\in P_k)$ are independent $\sigma$-algebra then, for all real $\alpha_1,\dots,\alpha_n$, the random variable $Y=\sum_i\alpha_i X_i$ is sub-Gaussian with variance parameter $L\sum_i\alpha_i^2\sigma_i^2$.
\end{lemma}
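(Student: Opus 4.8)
The plan is to exploit the block structure. Write $Y=\sum_{k=1}^{K}Y_k$ with $Y_k:=\sum_{i\in P_k}\alpha_i X_i$. Since $Y_k$ is $\Sigma_k$-measurable and the $\sigma$-algebras $\Sigma_1,\dots,\Sigma_K$ are independent, the random variables $Y_1,\dots,Y_K$ are independent, so the cumulant generating functions add: $\psi_Y(t)=\sum_{k=1}^K\psi_{Y_k}(t)$ for every $t\in\bbR$. Thus it suffices to produce a sub-Gaussian bound for each block sum $Y_k$.

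Next I would record the elementary fact that a sum of (possibly dependent) sub-Gaussian variables is sub-Gaussian, at the cost of a factor equal to the number of terms. If $Z_1,\dots,Z_L$ satisfy $Z_j\in\cG(s_j^2)$, then by the generalized H\"older inequality with all exponents equal to $L$,
\[
\bbE\Big[\exp\Big(t\sum_{j=1}^L Z_j\Big)\Big]
\leqslant \prod_{j=1}^{L}\bbE\big[\exp(L t Z_j)\big]^{1/L}
\leqslant \prod_{j=1}^{L}\exp\Big(\tfrac{L t^2 s_j^2}{2}\Big)
=\exp\Big(\tfrac{L t^2}{2}\sum_{j=1}^L s_j^2\Big),
\]
that is, $\sum_j Z_j\in\cG\big(L\sum_j s_j^2\big)$. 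I would also note the scaling identity $\psi_{\alpha X}(t)=\psi_X(\alpha t)\leqslant \alpha^2\sigma^2 t^2/2$, so that each $\alpha_i X_i$ lies in $\cG(\alpha_i^2\sigma_i^2)$.

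Applying the block estimate inside $P_k$ (which has exactly $L$ elements) gives $Y_k\in\cG\big(L\sum_{i\in P_k}\alpha_i^2\sigma_i^2\big)$, and summing over the independent blocks,
\[
\psi_Y(t)=\sum_{k=1}^K\psi_{Y_k}(t)\leqslant \frac{t^2}{2}\,L\sum_{k=1}^K\sum_{i\in P_k}\alpha_i^2\sigma_i^2
=\frac{t^2}{2}\,L\sum_{i=1}^n\alpha_i^2\sigma_i^2,
\]
which is exactly the claimed variance parameter. The only genuinely delicate point is the within-block estimate: dependence forces the use of H\"older and the loss of the factor $L$, which is unavoidable in general (take all $Z_j$ equal). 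Everything else — additivity of $\psi$ over independent summands and the scaling of $\psi$ under multiplication by a constant — is routine.
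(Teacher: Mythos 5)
Your argument is correct: the within-block generalized H\"older step (exponents all equal to $L$) combined with the scaling $\alpha_i X_i\in\cG(\alpha_i^2\sigma_i^2)$ and additivity of the log-moment generating function over the independent blocks $\Sigma_1,\dots,\Sigma_K$ yields exactly the variance parameter $L\sum_i\alpha_i^2\sigma_i^2$. The paper itself omits the proof and defers to \cite{JM18} (Proposition F.12), and your argument is essentially that standard one, so nothing further is needed.
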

Note that if $L=1$, the variables are independent.

\begin{lemma}[Hoeffeding Inequality,  \cite{BLM16}, Section 2.3]\label{lem:hoeff}
Let $X$ be a bounded random variable with $X\in[a,b]$, then $X-\mathbb{E}X\in\cG\left(\frac{(b-a)^2}4\right)$.
\end{lemma}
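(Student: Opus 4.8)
The plan is to prove the statement by the classical cumulant generating function argument for Hoeffding's lemma. Set $Y := X - \mathbb{E}X$, so that $Y$ takes values in the interval $[\tilde a, \tilde b] := [a - \mathbb{E}X,\, b - \mathbb{E}X]$, which has length $\tilde b - \tilde a = b - a$, and $\mathbb{E}Y = 0$. By the definition of $\cG(\sigma^2)$, it suffices to show that $\psi(t) := \log \mathbb{E}\big[e^{tY}\big]$ satisfies $\psi(t) \le \tfrac{(b-a)^2}{4}\cdot\tfrac{t^2}{2}$ for all $t \in \mathbb{R}$, the case $t = 0$ being trivial.

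First I would note that, since $Y$ is bounded, $t \mapsto M(t) := \mathbb{E}[e^{tY}]$ is finite and infinitely differentiable on $\mathbb{R}$, with derivatives obtained by differentiating under the expectation sign; in particular $\psi = \log M$ is well defined and smooth, with $\psi(0) = 0$ and $\psi'(0) = M'(0)/M(0) = \mathbb{E}[Y] = 0$. A direct computation gives $\psi''(s) = \tfrac{M''(s)}{M(s)} - \big(\tfrac{M'(s)}{M(s)}\big)^2 = \mathbb{E}_s[Y^2] - (\mathbb{E}_s[Y])^2$, where $\mathbb{E}_s$ denotes expectation under the exponentially tilted probability measure $\dd \mathbb{P}_s := \tfrac{e^{sY}}{\mathbb{E}[e^{sY}]}\,\dd\mathbb{P}$; that is, $\psi''(s)$ equals the variance of $Y$ under $\mathbb{P}_s$.

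The key elementary fact is that a random variable supported in an interval of length $L = b-a$ has variance at most $L^2/4$: writing $c := (\tilde a + \tilde b)/2$ for the midpoint of $[\tilde a, \tilde b]$, one has $\mathrm{Var}_{\mathbb{P}_s}(Y) \le \mathbb{E}_s\big[(Y - c)^2\big] \le (L/2)^2$, because $|Y - c| \le (b-a)/2$ almost surely. Hence $\psi''(s) \le \tfrac{(b-a)^2}{4}$ for every $s \in \mathbb{R}$. Applying Taylor's theorem with the Lagrange form of the remainder, for each $t$ there is $\xi$ between $0$ and $t$ with $\psi(t) = \psi(0) + t\psi'(0) + \tfrac{t^2}{2}\psi''(\xi) \le \tfrac{t^2}{2}\cdot\tfrac{(b-a)^2}{4}$, which is exactly the required bound, so $X - \mathbb{E}X \in \cG\big(\tfrac{(b-a)^2}{4}\big)$.

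I do not anticipate any genuine obstacle in this argument; it is entirely standard. The only points deserving a line of justification are the interchange of differentiation and expectation, which is immediate since $Y$ is bounded, and the identification of $\psi''(s)$ with the variance of $Y$ under the tilted law $\mathbb{P}_s$, which is the routine computation indicated above. An alternative, equally short route would be to use the convexity bound $e^{ty} \le \tfrac{\tilde b - y}{\tilde b - \tilde a}e^{t\tilde a} + \tfrac{y - \tilde a}{\tilde b - \tilde a}e^{t\tilde b}$, take expectations, and optimize the resulting one-variable bound; but the tilting computation above is the cleanest to present.
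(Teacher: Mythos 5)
Your proof is correct: the tilted-measure computation $\psi''(s)=\mathrm{Var}_{\mathbb{P}_s}(Y)\leqslant (b-a)^2/4$ followed by Taylor's theorem is exactly the standard argument for Hoeffding's lemma given in the cited reference \cite{BLM16}, and the paper itself omits the proof and simply points to that reference. No gaps; the only steps needing justification (differentiation under the expectation and the variance bound for a variable supported in an interval of length $b-a$) are handled adequately.
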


\section{Controls for the non-linearities}
In this section, we collect some results about the specifics of our model. Recall the notations: for $\eta\in\{0,1\}^{\Tn}$
\begin{align*}
c_0^+(\eta)&:=\1{\rho_0(\eta)\geqslant 1-\frac{\kappa_N}{K_N}},&
c_0^-(\eta)&:=\1{\rho_0(\eta)\leqslant \frac{\kappa_N}{K_N}},
\end{align*}
and $\kappa_{N}=\kappa_N(T):= \min\Big\{\lfloor K_N T \rfloor-1; \lfloor K_N(1-T)\rfloor \Big\}$.
For any function $u=(u_i)_{i\in \Tn}$, we define
$\gu_{u}(\dd\eta)=\gu_{u}^N(\dd\eta):=\bigotimes_{i\in \Tn}\mathrm{B}\big(u_i)$ where $B(u_i)$ denote a Bernoulli distribution with parameter $u_i$.
and we let $c_0^+(u)$ and $c_0^-(u)$ be the expectations of $c_0^+(\eta)$ and $c_0^-(\eta)$ under $\gu_u$.  We set $(\tau_i\eta)_j=\eta_{i+j}$, and likewise $\tau_i$ acts on $u$.
Then, 
\begin{align}\label{def:Gbis}
G(u)&:= (1-u_0)c_0^+(u)- u_0c_0^-(u)\\\notag
&=(1-u_0)\mathbb{P}_{\gu_u}\left[\rho_0(\eta)\geqslant 1-\frac{\kappa_N}{K_N}\right] -u_0\mathbb{P}_{\gu_u}\left[\rho_0(\eta)\leqslant\frac{\kappa_N}{K_N}\right]
\end{align}
and $G(i, u):=G(\tau_i u)$.
We start with some results on the non linearity $G$:
\begin{proposition}\label{prop:G1}
Let $u, v\in[0,1]^{\Tn}$ such that $u_i\geqslant v_i$ for all $i\in\cV_N$ and $u_0=v_0$. Then $G(0,u)\geqslant G(0,v)$.
\end{proposition}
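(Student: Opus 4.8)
Since $\tau_0$ is the identity, the statement is simply $G(u)\geqslant G(v)$ under the stated hypotheses, where by \eqref{def:Gbis}
\[
G(u)-G(v)=(1-u_0)\big(c_0^+(u)-c_0^+(v)\big)-u_0\big(c_0^-(u)-c_0^-(v)\big),
\]
using $u_0=v_0$. Because $1-u_0\geqslant 0$ and $u_0\geqslant 0$, it suffices to show the two monotonicity facts $c_0^+(u)\geqslant c_0^+(v)$ and $c_0^-(u)\leqslant c_0^-(v)$. Both will follow from a single monotone coupling argument, exploiting that $c_0^\pm$ depend on $u$ only through the coordinates $(u_j)_{j\in\cV_N}$, which by assumption satisfy $u_j\geqslant v_j$.

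\textbf{Step 1: monotone coupling.} Take i.i.d.\ uniform random variables $(U_j)_{j\in\cV_N}$ on $[0,1]$ on some probability space, and define $\eta_j:=\1{U_j\leqslant u_j}$ and $\xi_j:=\1{U_j\leqslant v_j}$ for $j\in\cV_N$ (the value at the origin is irrelevant since $\rho_0$ does not depend on $\eta_0$, cf. \eqref{def:meanfield}). Then $(\eta_j)_{j\in\cV_N}$ has law $\gu_u$, $(\xi_j)_{j\in\cV_N}$ has law $\gu_v$, and since $u_j\geqslant v_j$ we have $\eta_j\geqslant\xi_j$ pointwise, hence $\rho_0(\eta)\geqslant\rho_0(\xi)$ almost surely.

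\textbf{Step 2: conclude the monotonicity of $c_0^\pm$.} The event $\{\rho_0\geqslant 1-\kappa_N/K_N\}$ is increasing in $\rho_0$, so $\{\rho_0(\xi)\geqslant 1-\kappa_N/K_N\}\subseteq\{\rho_0(\eta)\geqslant 1-\kappa_N/K_N\}$, giving $c_0^+(u)=\mathbb P(\rho_0(\eta)\geqslant 1-\kappa_N/K_N)\geqslant\mathbb P(\rho_0(\xi)\geqslant 1-\kappa_N/K_N)=c_0^+(v)$. Symmetrically, $\{\rho_0\leqslant\kappa_N/K_N\}$ is decreasing in $\rho_0$, so $\{\rho_0(\eta)\leqslant\kappa_N/K_N\}\subseteq\{\rho_0(\xi)\leqslant\kappa_N/K_N\}$, giving $c_0^-(u)\leqslant c_0^-(v)$. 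Plugging these into the displayed identity for $G(u)-G(v)$ yields $G(u)-G(v)\geqslant (1-u_0)\cdot 0 - u_0\cdot 0 = 0$, which is the claim.

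\textbf{Main obstacle.} There is essentially no obstacle here: the only thing to be careful about is the bookkeeping that $c_0^\pm$ genuinely factor through $(u_j)_{j\in\cV_N}$ only — in particular that $\rho_0$ is independent of the coordinate at $0$ — so that the hypothesis $u_i\geqslant v_i$ \emph{for $i\in\cV_N$} (with no constraint at other sites, and with $u_0=v_0$ at the origin) is exactly what is needed for the coupling to be monotone and the conclusion to hold. The proof is then immediate from the standard coupling for Bernoulli product measures and the monotonicity of the level events.
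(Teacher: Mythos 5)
Your proof is correct and follows essentially the same approach as the paper: a monotone coupling of the Bernoulli product measures via shared i.i.d.\ uniforms, yielding $\rho_0(\eta)\geqslant\rho_0(\xi)$ pointwise, hence $c_0^+(u)\geqslant c_0^+(v)$ and $c_0^-(u)\leqslant c_0^-(v)$, from which the conclusion follows since $u_0=v_0$. Your write-up is a bit more explicit about the bookkeeping ($\rho_0$ not depending on $\eta_0$, level events being monotone in $\rho_0$), but the argument is the same.
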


\begin{proof}
We construct a coupling between $\gu_u$ and $\gu_v$: let $(U_i)$ be independent and identically distributed random variables uniform on $[0,1]$. Define $\eta_i=\1{U_i\leqslant u_i}$ and $\eta'_i=\1{U_i\leqslant v_i}$. We have $\eta_i\geqslant \eta'_i$ for all $i\in \cV_N$, therefore $\rho_0(\eta)\geqslant \rho_0(\eta')$. This proves that $c^+_0(u)\geqslant c^+_0(v)$ and $c^-_0(u)\leqslant c^-_0(v)$. The results follows since $u_0=v_0$.
\end{proof}

For $p\in[0,1]$, $T\in[0,1]$,  $K\in\mathbb{N}^*$, we let $\kappa(K,T)=\min\Big\{\lceil K T \rceil-1; \lfloor K (1-T)\rfloor \Big\}\leqslant K/2$, 
and we define $g_{K}(p)$ as 
\[
g_K(p):=(1-p)\mathbb{P}_p\big[X>K-\kappa(K,T)\big] -p\mathbb{P}_p\big[X\leqslant \kappa(K,T)\big]
\]
 where $X$ is a random variable with binomial distribution with parameter $(K,p)$ under $\mathbb{P}_p$. In particular, we have that $g_{K_N}(p)=G(i, u)$ for $u(t, i)=p$ for all $i\in\Tn$.  Note that $g_K$ is $C^{\infty}([0,1])$. We also have
\begin{equation}
\label{eq:gK_ext}
\begin{split}
g_K(p)
&=(1-p)\mathbb{P}_{1-p}\big[X<\kappa(K,T)\big] -p\mathbb{P}_p\big[X\leqslant \kappa(K,T)\big]\\
&=\sum_{k=0}^{\kappa(K,T)-1}\binom{K}{k}\left[(1-p)^{k+1}p^{K-k}-p^{k+1}(1-p)^{K-k}\right]\\&\qquad-\binom{K}{\kappa(K,T)}p^{\kappa(K,T)+1}(1-p)^{K-\kappa(K,T)}.
\end{split}
\end{equation}

We recall $g_{\infty}(p):=(1-p)\1{1-p<p_0(T)}-p\1{p\leqslant p_0(T)}$, where $p_0(T)=\min(T,1-T)$.

The following proposition estimates the convergence of $g_K$ to $g_\infty$, in particular we prove that close to $p=0$  (resp. $p=1$), $g_K$ is negative (resp. positive).
\begin{proposition}\label{prop:g_K1}
For all $K\in\mathbb{N}^*\cup \{\infty\}$, we have $g_K(0)=g_K(1)=0$.
For all $K\in\mathbb{N}^*$ and $p\in[0,1]$, we have, for $|p-p_0(T)|>\frac1K$ and $|(1-p)-p_0(T)|>\frac1K$
\begin{equation}\label{eq:g_K1}
|g_K(p)-g_{\infty}(p)|\leqslant 2e \exp(-2K(p_0(T)-p)^2)+ 2e \exp(-2K(p_0(T)-(1-p))^2)
\end{equation}
In particular, for any $0<\delta<\frac1{4e}$, $T\in[4\delta,1-4\delta]$, and $K>\frac{|\log(\delta)|}{4\delta^2}$, we have that $g_K(2\delta)<-\delta$ and $g_K(1-2\delta)>\delta$.
\end{proposition}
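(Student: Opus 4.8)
The plan is to treat the three assertions in turn, the middle one being the heart of the matter. For the boundary values I would argue directly from the definition of $g_K$: under $\mathbb{P}_0$ the variable $X$ is $0$ almost surely, so $\mathbb{P}_0[X>K-\kappa(K,T)]=0$ (because $\kappa(K,T)\leqslant K/2$ forces $K-\kappa(K,T)>0$), while the other summand carries the prefactor $p=0$; hence $g_K(0)=0$, and symmetrically $g_K(1)=0$ using $\kappa(K,T)<K$ and the prefactor $1-p=0$. For $K=\infty$ both values follow at once from $p_0(T)\leqslant\frac12$.

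For the quantitative estimate \eqref{eq:g_K1} the first step is to locate $\kappa:=\kappa(K,T)$. Using $\lceil x\rceil-1<x\leqslant\lceil x\rceil$ and $x-1<\lfloor x\rfloor\leqslant x$ together with $\min(T,1-T)=p_0(T)$, one checks that
\begin{equation*}
Kp_0(T)-1\leqslant\kappa\leqslant Kp_0(T),\qquad\text{so in particular}\qquad \kappa+1\geqslant Kp_0(T)\geqslant\kappa .
\end{equation*}
Then, using the representation \eqref{eq:gK_ext} I would write $g_K(p)=(1-p)\mathbb{P}_{1-p}[X<\kappa]-p\,\mathbb{P}_p[X\leqslant\kappa]$ and $g_\infty(p)=(1-p)\1{1-p<p_0(T)}-p\,\1{p\leqslant p_0(T)}$, so that
\begin{equation*}
|g_K(p)-g_\infty(p)|\leqslant\big|\mathbb{P}_p[X\leqslant\kappa]-\1{p\leqslant p_0(T)}\big|+(1-p)\big|\mathbb{P}_{1-p}[X<\kappa]-\1{1-p<p_0(T)}\big| .
\end{equation*}
Each term is controlled by Hoeffding's inequality (Lemma \ref{lem:hoeff}), distinguishing the cases $p<p_0(T)$ and $p>p_0(T)$ (resp. $1-p<p_0(T)$ and $1-p>p_0(T)$). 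When $p>p_0(T)$ the deviation $Kp-\kappa\geqslant K(p-p_0(T))\geqslant0$ gives $\mathbb{P}_p[X\leqslant\kappa]\leqslant\exp(-2K(p-p_0(T))^2)$; when $p<p_0(T)$ one has $\mathbb{P}_p[X>\kappa]=\mathbb{P}_p[X\geqslant\kappa+1]$ with deviation $\kappa+1-Kp\geqslant K(p_0(T)-p)>0$, giving $\mathbb{P}_p[X>\kappa]\leqslant\exp(-2K(p_0(T)-p)^2)$; the case $1-p>p_0(T)$ for the second term is identical. The one delicate case is $1-p<p_0(T)$, which forces $1-p<\frac12$: there $\mathbb{P}_{1-p}[X\geqslant\kappa]\leqslant\exp(-2(\kappa-K(1-p))^2/K)\leqslant\exp(-2(b-1)^2/K)$ with $b:=K(p_0(T)-(1-p))>1$ (this is where the hypothesis $|(1-p)-p_0(T)|>\frac1K$ is used), and multiplying by $1-p<\frac12$ one must absorb the Hoeffding loss; since $b\leqslant Kp_0(T)\leqslant K/2$ a one-line computation gives $\tfrac12\exp(-2(b-1)^2/K)\leqslant 2e\exp(-2b^2/K)$, which is exactly why the constant $2e$ appears. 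Collecting the four sub-cases yields \eqref{eq:g_K1}.

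For the last assertion the hypotheses give $p_0(T)\geqslant 4\delta$, hence $2\delta<p_0(T)$ and $1-2\delta>\frac12\geqslant p_0(T)$, so $g_\infty(2\delta)=-2\delta$; moreover $|2\delta-p_0(T)|\geqslant 2\delta$ and $|(1-2\delta)-p_0(T)|\geqslant\frac12-2\delta$ are both $>\frac1K$ because $K>|\log\delta|/(4\delta^2)>1/(2\delta)$ (using $-\log\delta>\log(4e)>2\delta$ for $\delta<1/(4e)$), so \eqref{eq:g_K1} applies. Its two exponential terms are then bounded by $2e\exp(-8K\delta^2)<2e\delta^2$ and $2e\exp(-K/8)<2e\delta^2$ respectively, again from $K>|\log\delta|/(4\delta^2)$ and $\delta<\frac18$; therefore $|g_K(2\delta)+2\delta|<4e\delta^2$, and $g_K(2\delta)<-2\delta+4e\delta^2<-\delta$, the last step being precisely equivalent to $4e\delta<1$. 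The bound $g_K(1-2\delta)>\delta$ follows from the same argument applied at $p=1-2\delta$, for which $g_\infty(1-2\delta)=2\delta$ and the two exponential terms in \eqref{eq:g_K1} are unchanged.

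I expect the only real obstacle to be the bookkeeping in the middle paragraph: namely checking that the constant $2e$ is large enough in the borderline case where the binomial mean sits within $O(1/K)$ of $\kappa$, and keeping the two-sided control of $\kappa$ by $Kp_0(T)$ straight across the cases $T\leqslant\frac12$ and $T\geqslant\frac12$. Everything else is a routine application of Hoeffding's inequality and elementary estimates.
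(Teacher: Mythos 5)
Your proof is correct and follows essentially the same route as the paper: locate $\kappa(K,T)$ within $1/K$ of $Kp_0(T)$, then apply Hoeffding's inequality to each of the two binomial tail probabilities, splitting according to which side of $p_0(T)$ the argument lies on. Your bookkeeping of $\kappa$ (using $\kappa+1\geqslant Kp_0(T)$ for the tail above $\kappa$, and $\kappa\geqslant Kp_0(T)-1$ for the tail at or above $\kappa$) is in fact slightly tighter than the paper's, which expands $\exp(-2K(p_0(T)-p-1/K)^2)$ with a small arithmetic slip in the exponent ($e^{2(p_0(T)-p)}$ should be $e^{4(p_0(T)-p)}$, so their stated constant $2e$ only works because the slack is absorbed elsewhere); your derivation justifies the constant $2e$ cleanly.
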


\begin{proof}
We consider $g_K$ written as in \eqref{eq:gK_ext}. Then, the values at $p=0$ and $p=1$ are obvious.

Note that, under $\mathbb{P}_p$, $\frac{X}{K}-p$ converges to $0$  (in $L^2(\Omega_N)$) and is sub-Gaussian with variance parameter $\frac1{4K}$, thus, for any $t>0$, 
\begin{align*}
\mathbb{P}_p\left(\left|\frac{X}{K}-p\right|>t\right)\leqslant 2 \exp(-2K t^2).
\end{align*}
We also have that 
\begin{align*}
0\leqslant p_0(T)-\frac{\kappa(K,T)}{K}\leqslant\frac1K.
\end{align*}
Then, for $p>p_0(T)$,
\begin{equation*}
\begin{split}
\mathbb{P}_p\big[X\leqslant\kappa(K,T)\big]
&=\mathbb{P}_p\left[p-\frac{X}{K}\geqslant p-p_0(T)+p_0(T)-\frac{\kappa(K,T)}K\right]\\
&\leqslant \mathbb{P}_p\left[p-\frac{X}{K}\geqslant p-p_0(T)\right]
\leqslant 2\exp(-2K(p-p_0(T))^2),
\end{split}
\end{equation*}
and for $p<p_0(T)-\frac1K$,
\begin{equation*}
\begin{split}
\mathbb{P}_p[X>\kappa(K,T)]
&=\mathbb{P}_p\left[\frac{X}{K}-p> \frac{\kappa(K,T)}K-p_0(T)+p_0(T)-p\right]\\
&\leqslant \mathbb{P}_p\left[\frac{X}{K}-p\geqslant p_0(T)-p-\frac1K\right]
\leqslant 2\exp(-2K(p_0(T)-p-1/K)^2)\\
&\leqslant 2e^{2(p_0(T)-p)}\exp(-2K(p_0(T)-p)^2)
\leqslant 2e \exp(-2K(p_0(T)-p)^2).
\end{split}
\end{equation*}
Thus, we have the following, for $|p-p_0(T)|>\frac1K$,
\begin{align*}
\left|\mathbb{P}_p[X\leqslant\kappa(K,T)]-\1{p\leqslant p_0(T)}\right|< 2e \exp(-2K(p_0(T)-p)^2).
\end{align*}
The results follows with the same estimates with $(1-p)$ instead of $p$. This prove \eqref{eq:g_K1}.

Let $\delta>0$ and set $p=2\delta$, and $T\in[4\delta,1-4\delta]$, then $|2\delta-p_0(T)|\leqslant 2\delta$ for $K>\frac1{2\delta}$ and we have the same for $1-p=1-2\delta$. Applying the result, we have 
\begin{align*}
|g_K(2\delta)-g_{\infty}(2\delta)|&=|g_K(2\delta)+2\delta|\leqslant 4e \exp(-8K\delta^2)
\end{align*}
Then, if $4e \exp(-8K\delta^2)<\delta$, we get the result. This happens if $\exp(-8K\delta^2)<\delta^2$ and $\delta<\frac1{4e}$, which gives the condition on $K$.
\end{proof}

\begin{proposition}\label{prop:conv_rho}
Let $v:\bbT^d\to [0,1]$ be a continuous fixed density on the torus. For any $i\in\Tn$ we let
$u_i:=v_{i/N}$. 
Then as $i/N\to x$, we have that $u_i$ converges to $v_x$. Let $\gu_u=\bigotimes_{i\in \Tn}\mathrm{B}\big(u_i)$. Then, 
$\rho_0(\eta)$ converges to $v_0$ in probability.
\end{proposition}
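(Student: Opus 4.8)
The plan is to recognize $\rho_0(\eta)$ as a normalized sum of independent Bernoulli variables whose common parameter $v_{j/N}$ is almost constant over $j\in\cV_N$, and to conclude by a law of large numbers, splitting the argument into the convergence of the expectation and a sub-Gaussian concentration estimate.

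The first claim is immediate from the continuity of $v$: if $i/N\to x$ in $\bbT^d$, then $u_i=v_{i/N}\to v_x$.

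For the second claim, I would fix $N$ and use that under $\gu_u$ the variables $(\eta_j)_{j\in\cV_N}$ are independent with $\mathbb{E}_{\gu_u}[\eta_j]=u_j=v_{j/N}$, so that
\begin{equation}
\mathbb{E}_{\gu_u}[\rho_0(\eta)]=\frac1{K_N}\sum_{j\in\cV_N}v_{j/N}.
\end{equation}
By Assumption \ref{ass1} the set $\cV_N$ has diameter $\ell_\cV\le\delta(\log N)^{1/d}$, hence $\|j/N\|\le\ell_\cV/N\to0$ uniformly in $j\in\cV_N$; since $v$ is uniformly continuous on the compact torus $\bbT^d$, this gives $\sup_{j\in\cV_N}|v_{j/N}-v_0|\to0$, and therefore $\mathbb{E}_{\gu_u}[\rho_0(\eta)]\to v_0$ as $N\to\infty$.

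It remains to control the fluctuations. By Hoeffding's inequality (Lemma \ref{lem:hoeff}), each centered variable $\eta_j-u_j$ belongs to $\cG(\tfrac14)$, and these variables are independent; hence, by Lemma \ref{lem:SGsum} applied with $L=1$, the variable $\rho_0(\eta)-\mathbb{E}_{\gu_u}[\rho_0(\eta)]=\tfrac1{K_N}\sum_{j\in\cV_N}(\eta_j-u_j)$ is sub-Gaussian with variance parameter $\tfrac1{4K_N}$. Proposition \ref{prop:subgauss} then yields, for every $t>0$,
\begin{equation}
\mathbb{P}_{\gu_u}\Big(\big|\rho_0(\eta)-\mathbb{E}_{\gu_u}[\rho_0(\eta)]\big|>t\Big)\le2\exp\big(-2K_N t^2\big).
\end{equation}
Combining this with the convergence of the mean, for any $\epsilon>0$ and all $N$ large enough that $|\mathbb{E}_{\gu_u}[\rho_0(\eta)]-v_0|\le\tfrac\epsilon2$ we obtain
\begin{equation}
\mathbb{P}_{\gu_u}\big(|\rho_0(\eta)-v_0|>\epsilon\big)\le\mathbb{P}_{\gu_u}\Big(\big|\rho_0(\eta)-\mathbb{E}_{\gu_u}[\rho_0(\eta)]\big|>\tfrac\epsilon2\Big)\le2\exp\big(-\tfrac12 K_N\epsilon^2\big),
\end{equation}
and the right-hand side tends to $0$ as $N\to\infty$ since $K_N\to\infty$ in the regime considered, which is the asserted convergence in probability. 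There is no genuine obstacle here: the only two points to verify are that the rescaled support $\cV_N/N$ shrinks to $\{0\}$ (so the continuity of $v$ controls the mean) and that $K_N\to\infty$ (so the variance, of order $1/K_N$, vanishes).
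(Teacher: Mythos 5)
Your proof is correct and follows essentially the same route as the paper: convergence of the mean $\tfrac1{K_N}\sum_{j\in\cV_N}u_j\to v_0$ via continuity of $v$ and the fact that $j/N\to 0$ uniformly over $j\in\cV_N$, plus a concentration bound for $\rho_0(\eta)$ around its mean. The only difference is cosmetic: the paper uses Chebyshev's inequality (bound $\tfrac1{4K_N\epsilon^2}$) where you invoke the Hoeffding/sub-Gaussian estimates, and both arguments rest on $K_N\to\infty$, exactly as you note.
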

\begin{proof}
We use the coupling introduced in Lemma \ref{prop:G1}: we let $(U_i)$ be i.i.d uniform random variables on $[0,1]$ so that under $\gu_u$ we have that $\rho_0(\eta)$ and $\frac{1}{K_N}\sum_{i\in\cV_N}\1{U_i<u_i}$ are equal in law. By the Tchebychev inequality, 
we have that for any $\epsilon>0$
\[
\bbP\Bigg(\Big|\frac{\sum_{i\in\cV_N}\1{U_i<u_i}}{K_N}-\frac{\sum_{i\in\cV_N}u_i}{K_N}\Big|>\epsilon\Bigg)\le \frac{1}{4K_N\epsilon^2}.
\]
Moreover, the sequence $K_N^{-1}\sum_{i\in\cV_N}u_i$ converges to $v_0$ because for any $i\in \cV_N$, $i/N\to 0$.
\end{proof}
We have the following corollary.

\begin{corollary}\label{cor:conv_rho}
Let $u=(u_i)_{i\in \Tn}$ as in Proposition \ref{prop:conv_rho}. Then, $G(u)$ and $g_{K_N}(v_0)$ converge both to $g_\infty(v_0)$ as $N\to +\infty$.
\end{corollary}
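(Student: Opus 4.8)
The statement contains two claims — convergence of $g_{K_N}(v_0)$ and convergence of $G(u)$, both to $g_\infty(v_0)$. I would prove them for $v_0$ a continuity point of $g_\infty$, i.e. $v_0\notin\{p_0(T),1-p_0(T)\}$; this is the case that is used in the proof of Theorem \ref{thm:discrconv}, and at the two exceptional levels neither quantity need converge (see the last paragraph). Throughout I use the elementary fact recorded after the definition of $\kappa_N$ that $\kappa_N/K_N\to\min(T,1-T)=p_0(T)$, together with $K_N\to+\infty$.

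For $g_{K_N}(v_0)\to g_\infty(v_0)$ there is nothing to do beyond invoking Proposition \ref{prop:g_K1}: since $v_0$ is fixed and bounded away from $p_0(T)$ and from $1-p_0(T)$, for all $N$ large enough we have $|v_0-p_0(T)|>1/K_N$ and $|(1-v_0)-p_0(T)|>1/K_N$, so estimate \eqref{eq:g_K1} applies and gives $|g_{K_N}(v_0)-g_\infty(v_0)|\le 2e\,e^{-2K_N(p_0(T)-v_0)^2}+2e\,e^{-2K_N(p_0(T)-(1-v_0))^2}\to 0$. For $G(u)\to g_\infty(v_0)$, recall from \eqref{def:Gbis} that $G(u)=(1-u_0)c_0^+(u)-u_0c_0^-(u)$, where here $u_0=v_{0/N}=v_0$, and $c_0^-(u)=\mathbb{P}_{\gu_u}(\rho_0(\eta)\le\kappa_N/K_N)$, $c_0^+(u)=\mathbb{P}_{\gu_u}(\rho_0(\eta)\ge 1-\kappa_N/K_N)$. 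I would show $c_0^-(u)\to\1{v_0\le p_0(T)}$ and $c_0^+(u)\to\1{1-v_0<p_0(T)}$. For $c_0^-$: if $v_0<p_0(T)$, set $\epsilon=(p_0(T)-v_0)/2>0$; since $\kappa_N/K_N\to p_0(T)$, eventually $\kappa_N/K_N>v_0+\epsilon$, hence $c_0^-(u)\ge\mathbb{P}_{\gu_u}(|\rho_0(\eta)-v_0|\le\epsilon)\to 1$ by Proposition \ref{prop:conv_rho}; if $v_0>p_0(T)$, set $\epsilon=(v_0-p_0(T))/2>0$, eventually $\kappa_N/K_N<v_0-\epsilon$, hence $c_0^-(u)\le\mathbb{P}_{\gu_u}(|\rho_0(\eta)-v_0|\ge\epsilon)\to 0$; as $v_0\ne p_0(T)$ the limit is $\1{v_0\le p_0(T)}$. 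The argument for $c_0^+(u)$ is identical after comparing $v_0$ with $1-\kappa_N/K_N\to 1-p_0(T)$ and using $v_0\ne 1-p_0(T)$. Substituting into $G(u)$ yields $G(u)\to(1-v_0)\1{1-v_0<p_0(T)}-v_0\1{v_0\le p_0(T)}=g_\infty(v_0)$.

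There is no genuine obstacle — everything reduces to law-of-large-numbers comparisons controlled by Proposition \ref{prop:conv_rho} — except at the degenerate levels $v_0\in\{p_0(T),1-p_0(T)\}$: there the mean $K_N^{-1}\sum_{j\in\cV_N}v_{j/N}$ of $\rho_0(\eta)$ approaches $p_0(T)$ at a rate dictated by the modulus of continuity of $v$ near the origin, which is not comparable in general to the fluctuation scale $K_N^{-1/2}$, so $c_0^\pm(u)$ need not converge. I would simply observe that at such a level one still obtains, by the same monotone comparison, the sandwich $g_\infty(v_0)\wedge g_\infty(v_0^+)\le\liminf_N G(u)\le\limsup_N G(u)\le g_\infty(v_0)\vee g_\infty(v_0^+)$ (and likewise for $g_{K_N}(v_0)$), which is exactly the form in which the corollary is applied, through \eqref{eq:lim_inf_sup_h}--\eqref{eq:lim_inf_sup_w}, in the proof of Theorem \ref{thm:discrconv}.
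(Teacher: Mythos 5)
Your argument is correct and is exactly the reasoning the paper leaves implicit: the corollary is stated there without proof, as an immediate consequence of Proposition \ref{prop:conv_rho} (convergence in probability of $\rho_0(\eta)$ to $v_0$, compared with the thresholds $\kappa_N/K_N\to p_0(T)$ and $1-\kappa_N/K_N\to 1-p_0(T)$) and of the estimate \eqref{eq:g_K1} of Proposition \ref{prop:g_K1}, which is precisely what you spell out. Your caveat about the levels $v_0\in\{p_0(T),1-p_0(T)\}$ is a genuine and correct refinement rather than a defect: as literally stated the corollary fails there (for instance, since $\kappa_N-K_Np_0(T)$ stays bounded, the central limit theorem gives $g_{K_N}(p_0(T))\to -\tfrac12 p_0(T)\neq g_\infty(p_0(T))$ when $0<p_0(T)<\tfrac12$), and the liminf/limsup sandwich between $g_\infty(v_0^-)$ and $g_\infty(v_0^+)$ that you record is exactly what is used, via \eqref{eq:lim_inf_sup_h}--\eqref{eq:lim_inf_sup_w}, in the proof of Theorem \ref{thm:discrconv}.
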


Recall that, for $q\in[-1,1]$,
\begin{align*}
h_K(q)
&=2g_K\left(\frac12(q+1)\right)+q\\
&=(1-q)\mathbb{P}_{\frac{1-q}2}\big[X<\kappa(K,T)\big] -(1+q)\mathbb{P}_{\frac{1+q}2}\big[X\leqslant \kappa(K,T)\big]+q.
\end{align*}
 Recall the critical parameter $\rho:=\rho(T)=\left|T-\frac12\right|=\frac12-p_0(T)\in\left[0,\frac12\right]$. Note that $h_K$ converges pointwise to the function  $h_\infty(q)=2g_{\infty}\left(\frac{1+q}2\right)+q
=-\mathbbm{1}_{q\leqslant -2\rho}+q\mathbbm{1}_{-2\rho<q\leqslant 2\rho}+\mathbbm{1}_{q>2\rho}$. The points $q=\pm2\rho$ are the discontinuities of $h_\infty$, and compare the Lemma  to the fact that $2\rho=h_{\infty}(2\rho^-)$ and $1=h_{\infty}(2\rho^+)$. A similar estimate holds at $q=-2\rho$.

\begin{lemma}\label{lemma:h_Kbounded}
For all $q\in[-1,1]$, 
$|h_K(q)|\leqslant 1$.
Moreover, for all $\epsilon>0$ such that $2\rho-\epsilon>0$, there is $K_0>0$ such that for $K\geqslant K_0$, 
\begin{align*}
2\rho-2\epsilon=h_{\infty}(2\rho^-)-2\epsilon\leqslant &h_K(q)\leqslant h_{\infty}(2\rho^+)=1&&\text{for $q\in[2\rho-\epsilon,2\rho+\epsilon]$}\\
-1=h_{\infty}(-2\rho^-) \leqslant &h_K(q)\leqslant h_{\infty}(-2\rho^+)+2\epsilon= -2\rho+2\epsilon&&\text{for $q\in[-2\rho-\epsilon,-2\rho+\epsilon]$}
\end{align*}
\end{lemma}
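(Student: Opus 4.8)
The plan is to prove the two assertions of Lemma \ref{lemma:h_Kbounded} separately, using the explicit formula
\[
h_K(q)=(1-q)\,\mathbb{P}_{\frac{1-q}2}\big[X<\kappa(K,T)\big]-(1+q)\,\mathbb{P}_{\frac{1+q}2}\big[X\leqslant \kappa(K,T)\big]+q,
\]
where $X$ is binomial, together with the concentration estimate already derived in Proposition \ref{prop:g_K1}.

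\textbf{The uniform bound $|h_K(q)|\le 1$.} First I would rewrite $h_K(q)=-r'_K(q)+q$ and recall from Proposition \ref{prop:solK} that $h_K$ is continuous on $\mathbb{R}$ with $h_K(-1)=-1$ and $h_K(1)=1$ (since $r'_K(\pm1)=0$). Next I would observe that on $[-1,1]$ we have the elementary bounds: both probabilities lie in $[0,1]$, $1-q\ge 0$ and $1+q\ge 0$, so
\[
h_K(q)\le (1-q)\cdot 1-(1+q)\cdot 0+q=1,
\qquad
h_K(q)\ge (1-q)\cdot 0-(1+q)\cdot 1+q=-1.
\]
These two inequalities give $|h_K(q)|\le 1$ for all $q\in[-1,1]$ directly, with no concentration input at all.

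\textbf{The refined sandwich near $q=\pm2\rho$.} Fix $\epsilon>0$ with $2\rho-\epsilon>0$ and take $q\in[2\rho-\epsilon,2\rho+\epsilon]$. Since $q\ge 2\rho-\epsilon>0$, the coefficient $(1-q)\in[0,1]$ and $(1+q)>0$; crucially $\frac{1+q}2>\frac{1+2\rho-\epsilon}2=1-p_0(T)-\tfrac\epsilon2$, which for $K$ large (namely $|\frac{1+q}2-p_0(T)|>\frac1K$, satisfied uniformly once $K\ge K_0(\epsilon)$) lets Proposition \ref{prop:g_K1} bound $\mathbb{P}_{\frac{1+q}2}[X\le\kappa(K,T)]$ by something like $2e\exp(-2K\,\mathrm{dist}^2)\le \epsilon$ after possibly enlarging $K_0$. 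Then
\[
h_K(q)\le (1-q)\cdot 1-(1+q)\cdot 0+q=1,
\]
while for the lower bound I use $(1-q)\ge 0$, $(1+q)\le 2$ and $\mathbb{P}_{\frac{1+q}2}[X\le\kappa]\le\epsilon$, getting $h_K(q)\ge 0-2\epsilon+q\ge 2\rho-\epsilon-2\epsilon$; choosing $\epsilon$ in the statement as half of this $\epsilon$ (or just relabelling) yields $h_K(q)\ge 2\rho-2\epsilon$. The case $q\in[-2\rho-\epsilon,-2\rho+\epsilon]$ is identical after the symmetry $h_K(q)=-h_K(-q)$ (which follows from $r'_K$ being odd up to the symmetry $g_K(p)=-g_K(1-p)$, already noted in the text), or directly by the same estimates applied to the other probability term.

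\textbf{Main obstacle.} The only delicate point is making the threshold $K_0$ \emph{uniform in $q$} over the interval $[2\rho-\epsilon,2\rho+\epsilon]$: the bound in Proposition \ref{prop:g_K1} requires $|\frac{1\pm q}2-p_0(T)|>\frac1K$, which fails exactly when $\frac{1+q}2$ is within $\frac1K$ of $p_0(T)$, i.e. $q$ within $\frac2K$ of $2\rho$. I would handle this by splitting $[2\rho-\epsilon,2\rho+\epsilon]$ into the sub-interval $|q-2\rho|\le\frac2K$ and its complement; on the thin sub-interval one uses directly the crude bounds $0\le\mathbb{P}_{\frac{1+q}2}[X\le\kappa]\le 1$ together with monotonicity of $p\mapsto\mathbb{P}_p[X\le\kappa]$ and the fact that $\frac{1+q}2$ is then within $O(1/K)$ of $1-p_0(T)$, where this probability is $O(1/\sqrt K)$ anyway by a local CLT or a direct binomial tail estimate; on the complement Proposition \ref{prop:g_K1} applies with a uniform rate. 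Choosing $K_0$ large enough that both contributions are $\le\epsilon$ completes the argument. Everything else is routine arithmetic with the formula for $h_K$.
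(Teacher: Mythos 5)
Your core argument is correct, and for the first claim it is if anything more elementary than the paper's: you bound $h_K(q)=(1-q)P_1-(1+q)P_2+q$ directly, using only $P_1,P_2\in[0,1]$ and $1\pm q\geqslant 0$, whereas the paper first rewrites $h_K$ through the identity $\mathbb{P}_{\frac{1-q}{2}}[X<\kappa(K,T)]=\mathbb{P}_{\frac{1+q}{2}}[X>K-\kappa(K,T)]$ and the three-way decomposition of $1$ into binomial probabilities, and then uses monotonicity in $p$; both routes give $|h_K|\leqslant 1$, and yours suffices. For the refined sandwich the two arguments are essentially the same: only $\mathbb{P}_{\frac{1+q}{2}}[X\leqslant\kappa(K,T)]$ must be made small, via the tail estimate inside the proof of Proposition \ref{prop:g_K1}. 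Two small repairs: take that probability $\leqslant\epsilon/2$ rather than $\leqslant\epsilon$, so the lower bound comes out as $2\rho-2\epsilon$ exactly (your ``relabel $\epsilon$'' fix does not quite work, since the interval $[2\rho-\epsilon,2\rho+\epsilon]$ is indexed by the same $\epsilon$); and note that the bound you invoke is the intermediate estimate in the proof of Proposition \ref{prop:g_K1}, not its statement, exactly as the paper does.

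Your ``main obstacle,'' however, is not there: you have interchanged the two binomial parameters. For $q$ near $+2\rho$, the parameter that approaches $p_0(T)=\tfrac12-\rho$ is $\frac{1-q}{2}$, i.e.\ the parameter of $P_1$, which your (and the paper's) argument only ever bounds by $0\leqslant P_1\leqslant 1$; the parameter $\frac{1+q}{2}$ of the probability that must be small sits at distance $\frac q2+\rho\geqslant 2\rho-\frac\epsilon2\geqslant\rho$ from $p_0(T)$, uniformly over $q\in[2\rho-\epsilon,2\rho+\epsilon]$, so $\mathbb{P}_{\frac{1+q}{2}}[X\leqslant\kappa(K,T)]\leqslant 2e^{-2K\rho^2}$ holds uniformly (and this one-sided tail bound in the proof of Proposition \ref{prop:g_K1} does not even need the $\frac1K$ gap). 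Hence no splitting of the interval and no local CLT is needed; moreover, as stated your patch would be wrong where you placed it, since for a parameter within $\frac1K$ of $p_0(T)$ the probability $\mathbb{P}_p[X\leqslant\kappa(K,T)]$ is of constant order (only the point mass $\mathbb{P}_p[X=\kappa(K,T)]$ is $O(1/\sqrt K)$). Finally, the exact antisymmetry $h_K(q)=-h_K(-q)$ fails for finite $K$: one computes $g_K(p)+g_K(1-p)=-(1-p)\mathbb{P}_{1-p}[X=\kappa(K,T)]-p\,\mathbb{P}_p[X=\kappa(K,T)]$, which is nonzero because of the strict versus non-strict inequalities, so rely on your ``direct'' alternative for $q\in[-2\rho-\epsilon,-2\rho+\epsilon]$ (there $P_1$ is the uniformly small term), which works verbatim and is what the paper implicitly does.
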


\begin{proof}
We start by observing that for all $q\in[-1,1]$, $\mathbb{P}_{\frac{1-q}2}\big[X<\kappa(K,T)\big]=\mathbb{P}_{\frac{1+q}2}\big[X>K-\kappa(K,T)\big]$. We also have that
\begin{equation*}
1=\mathbb{P}_{\frac{1+q}2}\big[X\leqslant\kappa(K,T)\big] +\mathbb{P}_{\frac{1+q}2}\big[\kappa(K,T)<X\leqslant K-\kappa(K,T)\big]+\mathbb{P}_{\frac{1+q}2}\big[X>K- \kappa(K,T)\big].
\end{equation*}
Thus, using the definiton of $h_K$, we get
\begin{align*}
h_K(q)=&-\mathbb{P}_{\frac{1+q}2}\big[X\leqslant\kappa(K,T)\big] +q\mathbb{P}_{\frac{1+q}2}\big[\kappa(K,T)<X\leqslant K-\kappa(K,T)\big]\\&+\mathbb{P}_{\frac{1+q}2}\big[X>K- \kappa(K,T)\big].
\end{align*}
This gives us the result.
Indeed, for $q\in[0,1]$, we have
\begin{align*}
h_K(q)&\leqslant q\mathbb{P}_{\frac{1+q}2}\big[\kappa(K,T)<X\leqslant K-\kappa(K,T)\big]+\mathbb{P}_{\frac{1+q}2}\big[X>K- \kappa(K,T)\big]\\
&\leqslant \mathbb{P}_{\frac{1+q}2}\big[\kappa(K,T)<X\big]\leqslant 1
\end{align*}
and 
\begin{align*}
h_K(q)&\geqslant -\mathbb{P}_{\frac{1+q}2}\big[X\leqslant\kappa(K,T)\big] +\mathbb{P}_{\frac{1+q}2}\big[X>K- \kappa(K,T)\big]\\
&\geqslant \mathbb{P}_{\frac{1-q}2}\big[X<\kappa(K,T)\big]-\mathbb{P}_{\frac{1+q}2}\big[X\leqslant\kappa(K,T)\big]
\geqslant  -\mathbb{P}_{\frac{1+q}2}\big[X=\kappa(K,T)\big].
\end{align*}
The last inequality comes form the fact that, by a coupling argument, $p\mapsto  \mathbb{P}_{p}\big[X<\kappa(K,T)\big]$ is decreasing on $[0,1]$, and since $q\geqslant 0$, $\frac{1-q}2\leqslant\frac{1+q}2$.

In particular, for $q\in[2\rho-\epsilon,2\rho+\epsilon]$ such that $2\rho-\epsilon >0$, we have the same upper bound as before for $h_K(q)$, and for the lower bound: 
\begin{align*}
h_K(q)&\geqslant -\mathbb{P}_{\frac{1+q}2}\big[X\leqslant\kappa(K,T)\big] +q\mathbb{P}_{\frac{1+q}2}\big[X> \kappa(K,T)\big]\\
&\geqslant  -\mathbb{P}_{\frac{1+q}2}\big[X\leqslant\kappa(K,T)\big] + (2\rho-\epsilon)\left(1-\mathbb{P}_{\frac{1+q}2}\big[X\leqslant\kappa(K,T)\big]\right)\\
 &\geqslant  2\rho-\epsilon-2\mathbb{P}_{\frac{1+q}2}\big[X\leqslant\kappa(K,T)\big].
\end{align*}
From the proof of Proposition \ref{prop:g_K1}, we have that, for $K\geqslant K_0$:
\begin{align*}
\mathbb{P}_{\frac{1+q}2}\big[X\leqslant\kappa(K,T)\big]\leqslant 2\exp\left( -K(q+2\rho)^2/2\right)\leqslant 2e^{-2K_0\rho^2}
\end{align*}
Choosing $K_0$ large enough such that the right hand side is less than $\epsilon/2$, we get the result.

For $q\in[-1,0]$, the proof is completely similar.
%

\end{proof}

\begin{lemma}\label{lemma:bound:H}
Let $u=(u_i)_{i\in \Tn}$, with $u_i\in [0,1]$ and let $v=2u-1$. Then, $|H(i,v)|\le 1$, uniformly on $i\in \Tn$.
\end{lemma}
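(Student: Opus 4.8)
The plan is to unwind the definition of $H$ and reduce the statement to the elementary fact that $c_i^+(u)$ and $c_i^-(u)$ are probabilities, hence lie in $[0,1]$. Recall that $H(i,v)=2G(i,\tfrac12(v+1))+v_i$, so writing $u=\tfrac12(v+1)$ (i.e. $v_i=2u_i-1$) and using $G(i,u)=G(\tau_i u)=(1-u_i)c_i^+(u)-u_i c_i^-(u)$ from \eqref{def:G}, we obtain
\begin{equation}
H(i,v)=2\big[(1-u_i)c_i^+(u)-u_i c_i^-(u)\big]+2u_i-1.
\end{equation}
Thus the first step is just this algebraic rewriting, which is purely a matter of bookkeeping with the change of variables $v=2u-1$.

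The second step is the bound itself. By \eqref{eq:c^pm u}, $c_i^+(u)=\mathbb{P}_{\gu_u}(\rho_i(\eta)\ge 1-\tfrac{\kappa_N}{K_N})\in[0,1]$ and similarly $c_i^-(u)\in[0,1]$, while $u_i\in[0,1]$ by hypothesis. Hence $0\le (1-u_i)c_i^+(u)\le 1-u_i$ and $0\le u_i c_i^-(u)\le u_i$, so that
\begin{equation}
-2u_i\le 2\big[(1-u_i)c_i^+(u)-u_i c_i^-(u)\big]\le 2(1-u_i).
\end{equation}
Adding $2u_i-1$ to all three terms gives $-1\le H(i,v)\le 1$, which is the claim, and the estimate is uniform in $i\in\Tn$ since no constant depending on $i$ was introduced.

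There is essentially no obstacle here: this is the discrete, configuration-dependent analogue of Lemma \ref{lemma:h_Kbounded} (which treats $h_K$, i.e. the case where $u$ is constant in space), and the same two-sided comparison — replacing $c_i^\pm(u)$ by $0$ or $1$ as appropriate — does the job. The only point requiring a line of care is making sure the cancellation of the $\pm 2u_i$ terms is carried out correctly at both endpoints, so that the bound is exactly $1$ and not merely $O(1)$.
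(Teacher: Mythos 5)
Your proof is correct and follows essentially the same route as the paper: unwind the definition of $H$ through the change of variables $v=2u-1$ and use only that $c_i^\pm(u)$ are probabilities in $[0,1]$ together with $u_i\in[0,1]$. The only (harmless) difference is the grouping: the paper rewrites $2G\left(\frac{v+1}{2}\right)+v_0$ as $c_0^+-c_0^-+v_0\left(1-c_0^+-c_0^-\right)$ and uses $c_0^++c_0^-\leqslant 1$, whereas your separate two-sided bounds on $(1-u_i)c_i^+$ and $u_ic_i^-$ do not even need that disjointness fact.
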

\begin{proof}
We recall \eqref{def:G}, in particular that $G(i,u)=G(\tau_i u)$. So that we only prove that $|H(v)|\le 1$, where $H(v)=G((v+1)/2)+v_0$. The proof is similar to Lemma \ref{lemma:h_Kbounded}. Indeed, again by \eqref{def:G} we have that
\begin{align*}
2G\left(\frac{v+1}{2}\right)+v_0 &=c_0^+\left(\frac{v+1}{2}\right)(1-v_0)-c_0^-\left(\frac{v+1}{2}\right)(1+v_0)+v_0\\
&=c_0^+\left(\frac{v+1}{2}\right)-c_0^-\left(\frac{v+1}{2}\right)+v_0\left(1-c_0^+\left(\frac{v+1}{2}\right)-c_0^-\left(\frac{v+1}{2}\right)\right).
\end{align*}
We observe that, by definition, $1-c_0^+\left(\frac{v+1}{2}\right)-c_0^-\left(\frac{v+1}{2}\right)>0$.
This implies that $ -1\le H(v)\le 1$.
\end{proof}

\providecommand{\bysame}{\leavevmode\hbox to3em{\hrulefill}\thinspace}
\providecommand{\MR}{\relax\ifhmode\unskip\space\fi MR }
\providecommand{\MRhref}[2]{%
  \href{http://www.ams.org/mathscinet-getitem?mr=#1}{#2}
}
\providecommand{\href}[2]{#2}

\end{document}